\documentclass[11pt]{article}
\usepackage{amsmath,amssymb,amsthm,mathrsfs}
\usepackage{hyperref}

\newtheorem{theorem}{Theorem}
\newtheorem{lemma}{Lemma}
\newtheorem{proposition}{Proposition}

\newtheorem{remark}{Remark}
\newcommand{\cH}{\mathcal{H}}
\newcommand{\cG}{\mathcal{G}}
\newcommand{\cF}{\mathcal{F}}
\newcommand{\cU}{\mathcal{U}}
\newcommand{\cV}{\mathcal{V}}
\newcommand{\cC}{\mathcal{C}}
\newcommand{\cS}{\mathcal{S}}

\title{Cyclic AG-Codes on the Hermitian Curve}
\author{Angela Aguglia\footnote{Angela Aguglia: angela.aguglia@poliba.it %\hfill\newline
Dipartimento di Meccanica, Matematica e Management,- Politecnico di Bari- Via E. Orabona, 4 - 70125 Bari (Italy).}%\hspace*{1.4em}
\newline \and
G\'abor Korchm\'aros, \footnote{G\'abor Korchm\'aros: gabor.korchmaros@unibas.it Dipartimento di Scienze di Base e Applicazioni, - Universit\`{a} degli Studi della Basilicata - Viale dell'Ateneo Lucano 10 - 85100 Potenza (Italy)}
}
\date{}
\begin{document}
\maketitle

\begin{abstract}
Cyclic AG-codes $C_\mathcal{L}({\mathtt{D}},{\mathtt{G}})$ on the Hermitian curve $\cH_q$ over $\mathbb{F}_{q^2}$ are constructed such that $\mathtt{G}=m(P_2+\ldots + P_{q})$ where  $2\le m \le q-1$ and ${\rm{supp}}(\mathtt{G})$ is the intersection of $\cH_q$ with a chord $\ell$ of $\cH_q$ minus two points $P_1,P_{q+1}$, while  $\mathtt{D}=Q_1+\ldots +Q_{q^2-1}$ where ${\rm{supp}}(\mathtt{D})$ comprises all the $q^2-1$ points of a single orbit under the action of the (cyclic) $2$-point stabilizer $\Gamma$ of $(P_1,P_{q+1})$ in ${\rm{Aut}}(\cH_q)=\mathrm{PGU}(3,q)$. 
\end{abstract}
\textbf{Keywords:} Hermitian Curve; Cyclic Code; AG Code\\
\textbf{MSC:} 14H55; 11T71; 11G20; 94B27.

\section{Introduction} A \emph{cyclic code} $C$ is a linear code of length $n$ such that the set of the codewords is invariant under an $n$-cycle permutation on the coordinates, that is, for any codeword $c=(c_1,c_2,...,c_n)$ in $C$, the cyclic shift $s(c) = s(c_1,c_2,...,c_n) = (c_2,...,c_n,c_1)$ is also a codeword in $C$.

Cyclic codes are among the most important linear codes since they have good parameters, excellent detection-correction capabilities and fast and efficient encoding-decoding algorithms. Also, cyclic codes appear in several important families of codes such as Golay codes, binary Hamming codes, Reed-Solomon codes and BCH codes. 

The systematic study of AG (algebraic-geometry) codes  which are cyclic codes has begun in the recent paper \cite{CCPT} where the procedure for the construction of any functional cyclic AG code $C_\mathcal{L}({\mathtt{D}},{\mathtt{G}})$
is described as follows. Given a non-singular absolutely irreducible, non-singular curve $\mathcal{X}$ defined over a finite field $\mathbb{F}_n$, choose an $\mathbb{F}_n$-automorphism $\sigma$ of $\mathcal{X}$ together with an $\mathbb{F}_n$-rational point $P\in\mathcal{X}$, and then define the support $D$ of $\mathtt{D}$ as the orbit of $P$ under the action of $\sigma$, and take for $\mathtt{G}$  an $\mathbb{F}_n$-rational  $\sigma$-invariant divisor whose support is disjoint from $D$. Then the arising functional AG code $C_{\mathcal{L}}(\mathtt{D},\mathtt{G})$ is cyclic.  The authors have named this procedure \emph{sigma-method}, and their major contribution in \cite{CCPT} concerns $1$-point cyclic AG codes on rational curves $\mathcal{X}$ where $\mathtt{G}=tQ$ for some $\mathbb{F}_n$-rational point $Q\in\mathcal{X}$. 

From previous work on AG codes it has emerged that the best performing linear codes are mostly defined over the Hermitian curve; see \cite{br,KN1,KN2,KNT,KS,XC,YK}   

In this paper, we construct a cyclic functional AG code $C_\mathcal{L}({\mathtt{D}},{\mathtt{G}})$ on the Hermitian curve $\cH_q$ defined over $\mathbb{F}_{q^2}$ such that $\mathtt{G}=m(P_2+\ldots + P_{q})$ where  $2\le m \le q-1$ and ${\rm{supp}}(\mathtt{G})$ is the intersection of $\cH_q$ with a chord $\ell$ of $\cH_q$ minus two points $P_1,P_{q+1}$, while  $\mathtt{D}=Q_1+\ldots +Q_{q^2-1}$ where ${\rm{supp}}(\mathtt{D})$ comprises all the $q^2-1$ points of a single orbit under the action of the (cyclic) $2$-point stabilizer $\Gamma$ of $(P_1,P_{q+1})$ in ${\rm{Aut}}(\cH_q)=\mathrm{PGU}(3,q)$. These cyclic functional AG codes have good performance as their true minimum distance is better (i.e. greater) than the Goppa designed minimum distance with an improvement of at least $q+1-m$; see Theorem \ref{mainth}. Unfortunately, their weight distribution turns out to be heavily dependent on involved intersection patterns of certain plane algebraic curves whose study would go far beyond the scope of the present paper. Nevertheless, the smallest cases $m=2,3$ are thoroughly discussed in Sections \ref{ss2} and \ref{ss3},  the main results being stated in Theorems \ref{th03042026B} and \ref{th03042026}, respectively.

Notation and terminology are standard. Our references are \cite{Hirschfeld1,HKT,hughes-piper1973,P,sti}.

\section{Preliminary Results}\label{sec2}
%Notation and terminology are standard. Our references are \cite{Hirschfeld1,HKT,hughes-piper1973,P,sti}.
%In particular, $\mathbb F_q$ stands for a finite field of order $q$, where $q=p^k$, $p$ prime, $AG(2,q)$ for the affine plane over $\mathbb{F}_q$, and $PG(2,q)$ for the projective closure of $PG(2,q)$.
\subsection{Plane curves and the Riemann-Roch theorem}
For the theory of plane algebraic curves over a field of positive characteristic; see \cite[Chapters 1-5]{HKT}. Fix an algebraic closure $\mathbb{K}$ of a finite field, and let $PG(2, \mathbb{K})$ be the projective plane over $\mathbb{K}$ equipped with homogeneous coordinates $(X_1:X_2:X_3)$. For a non-constant homogeneous polynomial $F(X_1,X_2,X_3)$ over $\mathbb{K}$, the plane curve $\mathcal F$ of equation $F(X_1,X_2,X_3)=0$ is defined to be the set of non-trivial zeros of $F=F(X_1,X_2,X_3)$, i.e.
\[{\bf{v}}(F)=\{(x_1:x_2:x_3)\in PG(2,\mathbb{K})| F(x_1,x_2,x_3) = 0, (x_1,x_2,x_3)\neq (0,0,0)\}.\]
The \textit{degree} of $\mathcal{F}$ is $\deg F$. A \textit{component} of $\mathcal F$ is any curve $\mathcal{G}=\mathbf{v}(G)$ of $PG(2,\mathbb{K})$ such that $G$ divides $F$. A curve $\mathcal F$ is \textit{irreducible} if $F$ is irreducible; otherwise it is \textit{reducible} and splits into irreducible curves, the components of $\mathcal{F}$. Given a positive integer $r$, a point $P\in \cF$ is an \textit{$r$-fold point} when the intersection multiplicity $I(P,\cF\cap \ell)\ge r$ for any line through $P$, and there exists a line $\ell$ for which equality holds. If $\cF$ is irreducible, then there are at most $r$ lines for which  $I(P,\cF\cap \ell)> r$; they are \textit{the tangent lines} to $\cF$ at $P$. If $r\ge 2$, an $r$-fold point is \textit{singular}. For $r=2$,  $P$ is either a \textit{node}, or a \textit{cusp} according as $\cF$ has two or one tangents at $P$.    

For a positive integer $n\ge 1$, take as many as $\frac{1}{2}n(n+3)$ points in $PG(2,\mathbb{K})$. By a classical result, there exists some curve of degree $n$ passing through each of those points.  

From now on $\cF$ is assumed to be smooth, i.e., $\cF$ has no singular point. Thus $\cF$ is irreducible, and its genus $\mathfrak{g}$ equals $\frac{1}{2}(\deg(\cF)-1)(\deg(\cF)-2)$. Let $\mathbb{K}(\cF)$ be the function field of $\cF$ with constant field $\mathbb{K}$.  

The \emph{divisors} are formal sums of points of $\mathcal F$, and for every nonzero function $h$ in $\mathbb{K}(\mathcal F)$,  ${\rm{div}}(h)$ stands for the \textit{principal divisor} associated with $h$.
For a divisor $\mathtt G$ on $\mathcal F$, the \textit{Riemann-Roch space} $\mathcal L(\mathtt G)$ is the vector space consisting of all functions on $\mathcal{F}$ which are regular outside $\mathtt G$ and have no pole at any point $P$ with multiplicity bigger than  the order of $\mathtt{G}$ at $P$, i.e.  $\mathcal L(\mathtt G)=\{f|{\rm{div}} (f)+\mathtt{G}\succcurlyeq 0$\}. The dimension $\ell(\mathtt G)$ of $\mathcal L(\mathtt G)$ and  $\deg(\mathtt G)$ are linked by the \textit{Riemann-Roch Theorem}, see \cite[Theorem 6.70]{HKT}:
\begin{equation}
\label{eq02112025}   \ell(\mathtt G) = \deg(\mathtt G)-\mathfrak  g + 1+\iota(\mathtt{G})
\end{equation}
where $\mathfrak  g$ is the genus of $\mathcal{F}$ and $\iota=\dim(\mathtt{W}-\mathtt{G})$ where $\mathtt W$ is a canonical divisor of $\mathcal{F}$. 
%In particular, for $\deg(\mathtt G)>2\mathfrak  g-2$,
%\[\ell(\mathtt G) = \deg(\mathtt G) -\mathfrak  g+1.\]
%Let $\Omega(\mathtt{G})$ be the vector space of rational differential forms $\omega$ on $\mathcal{F}$ with $\rm{div}(\omega)+\mathtt{G}\succcurlyeq 0$, together with the zero form.
  
Let $\mathcal{U}$ be another plane curve of equation $U(X_1,X_2,X_3)=0$, possibly singular, or reducible, such that $\cF$ is not a component of $\cU$. The intersection divisor $\mathcal{F}\cdot \mathcal{U}$ is defined by $$\sum_{P\in \mathcal{F}\cap\mathcal{U}} I(P,\mathcal{F}\cap\mathcal{U})P$$ 
where $I(P,\mathcal{F}\cap\mathcal{U})$ is the intersection multiplicity of $\mathcal{F}$ and $\mathcal{U}$ in their common point $P$. B\'ezout's theorem, see \cite[Theorem 3.14]{HKT}, states that $\deg(\mathcal{F}\cdot \mathcal{U})= \deg(\mathcal{F})\deg (\mathcal{U})$, that is, 
$$\deg(\mathcal{F})\deg(\mathcal{U})= \sum_{P\in \mathcal{F}\cap\mathcal{U}}I(P,\mathcal{F}\cap \mathcal{U}).$$ 
We will use the classical geometric interpretation of the Riemann-Roch space  based on linear series of curves cut out on $\cF$; see \cite[Chapter 6]{HKT}. Let  
$$ \mathtt{G}=n_1P_1+\ldots+n_sP_s,\quad n_1,\ldots,n_s>0.$$
 Take a curve $\mathcal{U}$ of minimum degree $n$ through ${\mathtt{G}}$, i.e. $\mathcal{F}\cdot \mathcal{U}\succeq \mathtt{G}$ equivalently, $I(P_i,\cF\cap \cU)\ge n_i$ for $i=1,\ldots,s$. Let $r_i=I(P_i,\cF\cap \cU)- n_i$. Suppose $\deg(\cF)>n$. Let $\Lambda$ be the set of all curves $\cV$ of degree $n$ such that $I(P_i,\cF\cap\cV)\ge r_i$. This set is a linear system of finite (projective) dimension, say  $r-1$, and the complete linear series $|\mathtt{G}|$ consists of all divisors cut out on $\mathcal{F}$ by the curves in $\Sigma$, apart from the fixed divisor $\sum_i r_iP_i$. Moreover,  the speciality index $\iota(\mathtt{G})$ is the number of independent canonical divisors passing through the divisor $\mathtt{G}$ where canonical divisors are the intersection divisors between $\cF$ and curves of degree $n-3$. With this setting, the (projective version of the) Riemann-Roch theorem reads
 $$\dim(|\mathtt{G}|)=\deg(\mathtt{G})-\mathfrak{g}+\iota(\mathtt{G}).$$
Take a basis $\{{\bf{v}}(V_1),\ldots,{\bf{v}}(V_{r-1})\}$ of $\Lambda$ where ${\bf{v}}(V_i)=V_i(X_1,X_2,X_3)$ for $i=1,\ldots,r-1$, and let $w_i=V_i(X,Y,1)$, and $u=U(X,Y,1)$. If $\mathbb{K}(\cF)$ is the function field of $\cF$ with generators $x,y$ such that $F(x,y,1)=0$ then $\mathcal{L}(\mathtt{G})$ has dimension $r$ and 
$$\mathcal{L}(\mathtt{G})=\langle 1, w_1(x,y)/u(x,y),\ldots w_{r-1}/u(x,y)\rangle.$$

%The following useful result connecting principal divisors and intersection divisors comes from \cite[Theorem 6.2]{HKT}. Let $\ell_\infty$ denote the line at infinity.  
%\begin{lem}
%\label{theo6.2HKT} Let $(x,y)$ be a generic point of $\mathcal{F}$. If $\mathcal{U}$ has degree $m$ then 
%$$
%{\rm{div}}(U(x,y))=\mathcal{F}\cdot \mathcal{U}-m(\ell_{\infty}\cdot \mathcal{F}).
%$$
%\end{lem}

Now fix a finite subfield $\mathbb{L}$ of $\mathbb{K}$, and assume that the non-singular plane curve $\mathcal{F}$ is defined over $\mathbb{L}$, that is, $\mathcal F$ has equation $F(X_1,X_2,X_3)=0$ with $F(X_1,X_2,X_3)\in \mathbb{L}[X_1,X_2,X_3]$. Let $\mathbb{L}(\cF)$ be the function field of $\cF$ with constant field $\mathbb{L}$.  
Choose a divisor $\mathtt G=\sum \lambda_i Q_i$ where $Q_1,\ldots,Q_k$ are pairwise distinct points defined over  $\mathbb{L}$, and a canonical divisor $\mathtt{W}$ defined over $\mathbb{L}$. Restrict the functions in the Riemann-Roch space of $\mathtt{G}$ to those defined over $\mathbb{L}$ where a function $f(X,Y)$ is defined over $\mathbb{L}$ if $f(X,Y)=g(X,Y)/h(X,Y)$ with $g(X,Y)$,$h(X,Y)\in \mathbb{L}[X,Y]$. By doing so a vector space $\bf{V}$ over $\mathbb{L}$ arises whose dimension remains $\ell(\mathtt{G})$. Also, the Riemann-Roch theorem (\ref{eq02112025}) holds true for $\bf{V}$. Accordingly, we use the term of Riemann-Roch space of $\mathtt{G}$ over $\mathbb{L}$ for $\bf{V}$, and keep the same notation $\mathcal L(\mathtt G)$. In other words, $\mathcal L(\mathtt G)=\{f|{\rm{div}} (f)+\mathtt{G}\succcurlyeq 0, \mbox{$f$ defined over $\mathbb{L}$}\}$. The above geometric interpretation 
holds true over $\mathbb{L}$ whenever $\cV$ is restricted to curves defined over $\mathbb{L}$.  
\subsection{Functional AG codes}
We keep our notation introduced in the previous section. Moreover, let $D$ be a set of points of $\mathcal{F}$ in $PG(2,\mathbb{L})$ other than those in the support $G$ of $\mathtt{G}$. Fix an ordering $(Q_1,Q_2,\ldots,Q_n)$ of the points in $D$, and let $\mathtt{D}=Q_1+\ldots+Q_n$, the associated divisor. 
Let $\mathcal{L}(\mathtt{G})$ be the Riemann Roch space of $\mathtt{G}$ defined over $\mathbb{L}$. 
For any function $f\in\mathcal L(\mathtt G)$ defined over $\mathbb{L}$, the evaluation of $f$ on $\mathtt{D}$ is given by $ev_{\mathtt D}(f) =(f(Q_1),\ldots, f(Q_n))$. 
%Assume that $n>\deg(G)>2\mathfrak{g}-2$. 
The arising evaluation map $ev_{\mathtt D}:\mathcal L(\mathtt G)\rightarrow\mathbb{L}^n$ is $\mathbb{L}$-linear. If  $ev_{\mathtt D}$ is also injective, then its image is the {\emph{functional}} code $C_{\mathcal{L}}(\mathtt{D},\mathtt{G})$ of length $n$, dimension $k=\deg(\mathtt{G})-\mathfrak{g}+1$ and minimum distance $d\geq \delta$ where $\delta=n-\deg(\mathtt{G})$ is the \emph{designed minimum distance}.
$C_{\mathcal{L}}(\mathtt{D},\mathtt{G})$ is \emph{cyclic}, if ${\rm{PGL}}
(3,\mathbb{K})$ has a subgroup $\Gamma$ such that $\Gamma$ preserves $\cF$, $G$, $D$, and acts on $D$ as a cyclic permutation group. 

\subsection{The projective unitary group and the Hermitian curve} 
The projective unitary group $\rm{PGU}(3,q)$ is a subgroup of the projective group $\rm{PGL}(3,q^2)$ of the projective plane $PG(2,q^2)$ defined over the finite field $\mathbb{F}_{q^2}$ of order $q^2$. More precisely, $\rm{PGU}(3,q)$ is the subgroup of $\rm{PGL}(3,q^2)$ which leaves invariant the set of the $q^3+1$ isotropic points of a (non-degenerate) unitary polarity, equivalently the set of all points of a Hermitian curve $\cH_q$ in $PG(2,q^2)$; see \cite{Segre}. For the structure and the action of the subgroups of $\rm{PGU}(3,q)$; see \cite{har,hoffer1972,oli, mi}, and  \cite[Theorem A.10]{HKT}. In particular, the subgroup $\Gamma$ of $\rm{PGU}(3,q)$ fixing two distinct points, say $A_1,A_2$, is a cyclic group of order $q^2-1$ which preserves the set of the other $q-1$ points on the chord $A_1A_2$ of $\cH_q$ and acts semiregularly on the remaining $q^3+1-(q+1)$ points on $\cH_q$. Prior to a suitable change of the projective frame of $PG(2,q^2)$, $\cH_q$ has homogeneous equation $X_2^qX_3+X_2X_3^q-X_1^{q+1}=0$ and $A_1=O, A_2=Y_\infty$ where $O=(0:0:1)$ is the origin and $Y_\infty=(0:1:0)$ is the unique point of $\cH_q$ at infinity. Moreover, the elements of $\Gamma$ are represented by matrices of the form 
\begin{equation}
\label{eq09032026}
M_\lambda=
\left(\begin{array}{ccc}
\lambda & 0 & 0\\
0 & \lambda^{q+1}& 0\\
0 &0& 1
\end{array}\right),
\end{equation}
where $\lambda$ ranges over the non-zero elements of $\mathbb{F}_{q^2}$. Fix a generator $\omega$ of the multiplicative group of $\mathbb{F}_{q^2}$. In $PG(2,q^2)$, if $Q=(u:v:1)$ is a point of $\cH_q$ with $u\neq 0$, then the orbit $\Omega$ of $Q$ consists of all (pairwise distinct) points $Q_i=(\omega^i u:\omega^{(q+1)i}v:1)$ where $i=1,\ldots, q^2-1$,  and $v^q+v=u^{q+1}$. Notice that $u\ne 0$ implies $v^{q-1}+1\ne 0$.
For $\tau\in\mathbb{F}_{q^2}$, $\tau\neq 0$, let $\cC_\tau$ be the rational plane curve of equation $X_2X_3^q-\tau X_1^{q+1}$.
Clearly, $\cC_\tau$ is left invariant under the action of $\Gamma$, and the following claim holds. 
\begin{lemma}
\label{lem09032026A} If $\tau=1/(v^{q-1}+1)$, then the $\Gamma$-orbit $\Omega$ is contained in $\cC_\tau\cap \cH_q$. 
\end{lemma}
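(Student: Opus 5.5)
Since $\Omega$ is, by definition, the $\Gamma$-orbit of $Q=(u:v:1)$ and every $Q_i$ already lies on $\cH_q$, it suffices to prove $\Omega\subseteq\cC_\tau$. I would first show $Q\in\cC_\tau$, and then use the $\Gamma$-invariance of $\cC_\tau$ noted just before the statement. To make the argument self-contained, I would also verify that invariance directly on the points $Q_i$.

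First step. At $Q=(u:v:1)$ the equation of $\cC_\tau$ reduces to $v-\tau u^{q+1}=0$. Substitute $u^{q+1}=v^q+v=v(v^{q-1}+1)$, which comes from $Q\in\cH_q$. Since $u\neq 0$, we have $v\neq 0$ and $v^{q-1}+1\neq 0$, as remarked in the text, so $\tau=1/(v^{q-1}+1)$ is well defined. It then gives $\tau u^{q+1}=v$. Hence $Q\in\cC_\tau$.

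Second step. For $Q_i=(\omega^i u:\omega^{(q+1)i}v:1)$ the left-hand side of the equation of $\cC_\tau$ becomes
$$\omega^{(q+1)i}v-\tau\,\omega^{(q+1)i}u^{q+1}=\omega^{(q+1)i}\bigl(v-\tau u^{q+1}\bigr).$$
This vanishes by the first step. Equivalently, the polynomial $X_2X_3^q-\tau X_1^{q+1}$ is multiplied by $\lambda^{q+1}$ under $M_\lambda$, so $\cC_\tau$ is $\Gamma$-invariant. Thus every $Q_i$ lies in $\cC_\tau\cap\cH_q$.

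There is no real obstacle here. The only point needing care is that $\tau$ is well defined and nonzero, and this follows from $u\ne0$ via $u^{q+1}=v(v^{q-1}+1)$.
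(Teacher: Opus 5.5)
Your proof is correct and is the argument the paper has in mind. The paper gives no formal proof; it only remarks that $\cC_\tau$ is $\Gamma$-invariant, leaving implicit the check that $Q\in\cC_\tau$ via $u^{q+1}=v(v^{q-1}+1)$, which your two steps make explicit (including the direct verification on each $Q_i$).
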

\section{Some results on intersection divisors}
Our next step is to determine the intersection divisor between $\cH_q$ and $\cC_\tau$.
%for $\tau=1/(v^{q-1}+1)$.
\begin{lemma}
\label{lem09032026B} If $\tau=1/(v^{q-1}+1)$, then   
\begin{equation}
\label{eq09032026C}
  \cH_q\cdot \cC_{\tau}= (q+1)(O+Y_\infty)+Q_1+Q_2+\ldots+Q_{q^2-1}. 
\end{equation}
\end{lemma}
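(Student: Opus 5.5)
Both $\cH_q$ and $\cC_\tau$ have degree $q+1$, so by B\'ezout's theorem the intersection divisor $\cH_q\cdot\cC_\tau$ has degree $(q+1)^2=q^2+2q+1$. The right-hand side of (\ref{eq09032026C}) also has degree $2(q+1)+(q^2-1)=q^2+2q+1$. Since intersection multiplicities are nonnegative, it suffices to prove three lower bounds: $I(O,\cH_q\cap\cC_\tau)\ge q+1$, $I(Y_\infty,\cH_q\cap\cC_\tau)\ge q+1$, and $I(Q_i,\cH_q\cap\cC_\tau)\ge 1$ for each of the $q^2-1$ pairwise distinct points $Q_i$. Equality of degrees then forces all of them to be equalities and rules out any other common point.

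The last bound is Lemma \ref{lem09032026A}. The $Q_i$ are pairwise distinct because $\Gamma$ acts semiregularly off the chord and $u\ne0$. They also differ from $O$ and $Y_\infty$, since they are affine points with nonzero first coordinate.

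For $O$, I work in the affine chart $X_3=1$ with $x=X_1$, $y=X_2$. Here $\cH_q$ is $y^q+y=x^{q+1}$, which is smooth at $O$ with tangent line $y=0$. Since $y$ vanishes on the tangent, $x$ is a local parameter at $O$ on $\cH_q$. From the equation, $y(1+y^{q-1})=x^{q+1}$, and $1+y^{q-1}$ is a unit at $O$, so $v_O(y)=q+1$. The local equation of $\cC_\tau$ is $y-\tau x^{q+1}$. Its valuation at $O$ is at least $q+1$, because both terms have valuation $q+1$. This gives $I(O,\cH_q\cap\cC_\tau)=v_O(y-\tau x^{q+1})\ge q+1$.

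For $Y_\infty$, I pass to the chart $X_2=1$ with $s=X_1$, $t=X_3$. Here $\cH_q$ becomes $t+t^q=s^{q+1}$ and $\cC_\tau$ becomes $t^q-\tau s^{q+1}$. As before, $s$ is a local parameter at $Y_\infty$ and $v(t)=q+1$. Then both $t^q$ and $\tau s^{q+1}$ have valuation at least $q+1$, so $I(Y_\infty,\cH_q\cap\cC_\tau)\ge q+1$.

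The step I expect to need the most care is making sure the valuations are not accidentally larger. The degree count handles this automatically, so no cancellation analysis is needed. The only real point to check is that $\cH_q$ is not a component of $\cC_\tau$, which holds because $\cC_\tau$ is irreducible and differs from $\cH_q$; this makes B\'ezout applicable.
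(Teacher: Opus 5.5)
Your proof is correct and is essentially the paper's argument. You establish $I\ge q+1$ at $O$ and at $Y_\infty$ and $I\ge 1$ at each of the $q^2-1$ distinct points $Q_i$, and B\'ezout's total $(q+1)^2$ then forces equality and excludes any further common point. The differences are only in the local estimates: you use valuations on $\cH_q$ (local parameter $x$, resp.\ $X_1/X_2$), whereas the paper substitutes $Y=\tau X^{q+1}$ at $O$ to get exactly $q+1$ from $\tau\ne 1$, and at $Y_\infty$ uses that $Y_\infty$ is a $q$-fold point of $\cC_\tau$ sharing its tangent line with the simple point of $\cH_q$.
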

\begin{proof} We begin by showing that $I(O,\cH_q\cap \cC_\tau)=q+1$. We pass to non-homogeneous coordinates by setting $X=X_1/X_3$ and $Y=X_2/X_3$. Then $\cH_q$ and $\cC_\tau$ have equations $Y^q+Y-X^{q+1}=0$ and $Y=\tau X^{q+1}$, respectively. Replacing $Y$ by $\tau X^{q+1}$ in $Y^q+Y-X^{q+1}$ yields $(\tau-1)X^{q+1}+\tau^qX^{(q+1)q}$. Since $v\neq 0$  (otherwise $u=0$), we have $v^{q-1}+1\neq 1$ and hence  $\tau\ne 1$. Therefore,
$I(O,\cH_q\cap \cC_\tau)=q+1$. 
Next, we show that $I(Y_\infty,\cH_q\cap \cC_\tau)\ge q+1$. Since $Y_\infty$ is a $q$-fold point of $\cC_\tau$ and a simple point of $\cH_q$ and the line at infinity of equation $X_3=0$ is a common tangent to $\cC_\tau$ and $\cH_q$ at $Y_\infty$, the claim follows from \cite[Proposition 3.6]{HKT}. Now, as $Q_i\in \cH_q\cap \cC_\tau$, we also have $I(Q_i,\cH_q\cap \cC_\tau)\ge 1$. Since $\deg(\cH_q\cdot \cC_{\tau})=(q+1)^2$ by B\'ezout's theorem, this yields (\ref{eq09032026C}).   
\end{proof}
Let $\mathcal{S}$ be the completely reducible curve of degree $m\ge 2$ splitting into $m$ lines $\ell_1,\ldots,\ell_m$ through $X_\infty=(1:0:0)$ whose equations are 
\begin{equation}
\label{eq10032026B}
\ell_m: X_3=0,\,\, \ell_{m-1}: X_2=0,\,\, \ell_i: X_2-a_iX_3=0,\, i=1,2,\ldots, m-2
\end{equation}
where $a_i/\tau\in \mathbb{F}_q$ with $a_i\neq 0$. 
\begin{lemma}
\label{lem09032026D}  If $\tau=1/(v^{q-1}+1)$, then   
\begin{equation}
\label{eq09032026CC}
  \cS\cdot \cC_{\tau}= (q+1)(O+Y_\infty)+\sum_{i=1}^{m-2} (Q_{i,1}+Q_{i,2}+\ldots+Q_{i,q+1}). 
\end{equation}
\end{lemma}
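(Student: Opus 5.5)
The plan is to use additivity of intersection divisors over the components of $\cS$. Since $\cS$ splits into the $m$ lines $\ell_1,\ldots,\ell_m$ of (\ref{eq10032026B}), we have $\cS\cdot\cC_\tau=\sum_{k=1}^m \ell_k\cdot \cC_\tau$. None of these lines is a component of $\cC_\tau$, because $\cC_\tau$ is irreducible of degree $q+1\ge 3$. By B\'ezout's theorem each $\ell_k\cdot\cC_\tau$ has degree $q+1$, so it is enough to find, for each line, points of total multiplicity $q+1$. Throughout, $\cC_\tau$ has homogeneous equation $X_2X_3^q-\tau X_1^{q+1}=0$.

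First I treat the two special lines. For $\ell_m\colon X_3=0$, substituting into the equation of $\cC_\tau$ leaves $-\tau X_1^{q+1}=0$. This binary form in $(X_1,X_2)$ has the single root $X_1=0$, of multiplicity $q+1$, giving the point $Y_\infty$. Hence $\ell_m\cdot\cC_\tau=(q+1)Y_\infty$. In the same way, for $\ell_{m-1}\colon X_2=0$ the restriction is $-\tau X_1^{q+1}=0$ as a form in $(X_1,X_3)$. Its only root is the point $O=(0:0:1)$, with multiplicity $q+1$, so $\ell_{m-1}\cdot\cC_\tau=(q+1)O$. Here I use the standard fact that the intersection multiplicity of a line with a curve equals the multiplicity of the corresponding root of the restricted binary form.

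Next come the lines $\ell_i\colon X_2=a_iX_3$ with $1\le i\le m-2$. At infinity, $X_3=0$ forces $X_2=0$, so the only candidate is $X_\infty=(1:0:0)$. This point does not lie on $\cC_\tau$, since there the equation gives $-\tau\ne 0$. In affine coordinates the intersection is given by $Y=a_i$ together with $X^{q+1}=a_i/\tau$. Since $a_i/\tau\in\mathbb{F}_q^*$ and the norm map $\mathbb{F}_{q^2}^*\to\mathbb{F}_q^*$ is surjective with kernel of order $q+1$, this equation has exactly $q+1$ distinct roots $x_{i,1},\ldots,x_{i,q+1}\in\mathbb{F}_{q^2}$. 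Let $Q_{i,j}=(x_{i,j}:a_i:1)$. These give $q+1$ distinct intersection points. Because the degree is $q+1$, each has multiplicity one, and $\ell_i\cdot\cC_\tau=Q_{i,1}+\cdots+Q_{i,q+1}$.

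Summing the $m$ contributions yields (\ref{eq09032026CC}). The points $Q_{i,j}$ are pairwise distinct for distinct $i$, since their $Y$-coordinates $a_i$ differ (I take the $a_i$ distinct, as the lines are distinct). They also differ from $O$ and $Y_\infty$, because $a_i\neq 0$ and they are affine points. The total degree is $m(q+1)=\deg\cS\cdot\deg\cC_\tau$, as B\'ezout requires. I do not expect a real obstacle. The only delicate step is the multiplicity $q+1$ at the singular points $O$ and $Y_\infty$ of $\cC_\tau$, which the binary-form computation settles directly. The hypothesis $\tau=1/(v^{q-1}+1)$ plays no role beyond $\tau\ne0$ and the condition $a_i/\tau\in\mathbb{F}_q$.
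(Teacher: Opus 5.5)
Your proof is correct and follows essentially the same route as the paper: split $\cS$ into its lines, show $\ell_m$ and $\ell_{m-1}$ meet $\cC_\tau$ only at $Y_\infty$ and $O$ with multiplicity $q+1$, show each $\ell_i$ meets $\cC_\tau$ in $q+1$ distinct points because $a_i/\tau\in\mathbb{F}_q^*$, and conclude with B\'ezout. The only difference is that you read off the multiplicities at $O$ and $Y_\infty$ directly from the restricted binary forms. The paper instead argues via the $q$-fold point $Y_\infty$ with tangent $\ell_m$, and via $O$ being the unique common point of $\ell_{m-1}$ and $\cC_\tau$.
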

\begin{proof} $I(Y_\infty, \cC_\tau\cap \ell_m)\ge q+1$, as $Y_\infty$ is a $q$-fold point of $\cC_\tau$ and $\ell_m$ is the tangent to $\cC_\tau$ at $Y_\infty$. Also,  $I(O,\cC_\tau \cap \ell_{m-1})=q+1$, since $O$ is a simple point of $\cC_\tau$ and $O$ is also the unique common point of $\cC_\tau$ and $\ell_{m-1}$. 
Moreover, since $(a_i/\tau)^{q-1}=1$, $\ell_i$ intersect $\cC_\tau$ in $q+1$ pairwise distinct points, say $Q_{i,1},\ldots, Q_{i,{q+1}}$. Thus $I(Q_{i,j},\cC_\tau\cap \ell_i)=1$ for $1\le i \le m-2$ and $1\le j\le q+1$. Since $\deg(\cS\cdot \cC_{\tau})=m(q+1)$, the claim follows.   
\end{proof} 
Now, let $P_1=O$, $P_i=(0:b_i:1)$ with %$\mathfrak{T}(b)=b^q+b=0$, 
$b_i^q+b_i=0$ and $P_{q+1}=Y_\infty$. Then the chord $OY_\infty$ intersects $\cH_q$ in the points $P_1,P_2,\ldots,P_{q+1}$. 

From now on, $q\ge 3$ is assumed. Fix an integer $2\le m \le q-1$, and let   
\begin{equation}
\label{eq09032026E}
\mathtt{G}=m(P_2+\ldots + P_{q}).
\end{equation}
\begin{proposition}
\label{pro09032026} The linear series $|\mathtt{G}|$ is cut out, apart from the fixed divisor $m(O+Y_\infty)$, by the linear system consisting of all curves $\cV$ of equation 
\begin{equation}
\label{eq10032026A}
X_2X_3g(X_1,X_2,X_3)+\varepsilon X_1^m=0
\end{equation}
where $g(X_1,X_2,X_3)$ runs over all homogeneous polynomials of degree $m-2$. Moreover, $\deg(|\mathtt{G}|)=m(q-1)$ and $\dim(|\mathtt{G}|)=\textstyle{\frac{1}{2}}m(m-1).$
\end{proposition}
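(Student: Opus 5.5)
The plan is to apply the geometric description of $|\mathtt{G}|$ from Section~\ref{sec2} with the auxiliary curve $\cU$ chosen as the $m$-fold line $X_1^m=0$. The line $X_1=0$ is the chord $OY_\infty$. In affine coordinates it meets $\cH_q$ in the $q$ points $(0,b)$ with $b^q+b=0$, and it also meets $\cH_q$ at $Y_\infty$. It is not tangent at $Y_\infty$, since the tangent there is $X_3=0$. By B\'ezout's theorem each of $P_1,\ldots,P_{q+1}$ is therefore a simple intersection, and
$$\cH_q\cdot \cU=m(P_1+\ldots+P_{q+1})=\mathtt{G}+m(O+Y_\infty).$$
So $r_i=m$ at $O$ and at $Y_\infty$, and $r_i=0$ at $P_2,\ldots,P_q$. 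The fixed divisor is then $m(O+Y_\infty)$, and $\deg(|\mathtt{G}|)=\deg\mathtt{G}=m(q-1)$. I will also check that no curve of degree smaller than $m$ passes through $\mathtt{G}$. The same local computation below handles this: a curve of degree $n<m$ cannot have $I(P_i,\cH_q\cap\cV)\ge m$ at the $q-1$ points $P_i$ unless it contains the line $X_1=0$, and peeling off that line reduces the multiplicities. If this step is awkward, I will instead observe that only the linear series itself matters, and verify it directly via $\mathcal{L}(\mathtt{G})=\langle V/X_1^m\rangle$.

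The core step is to determine $\Lambda$, the set of degree-$m$ curves $\cV$ with $I(O,\cH_q\cap\cV)\ge m$ and $I(Y_\infty,\cH_q\cap\cV)\ge m$.

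At $O$, the curve $\cH_q$ is smooth with tangent $Y=0$, so $x$ is a local parameter and $y$ has valuation $q+1$. A monomial $x^ay^b$ therefore has valuation $a+(q+1)b$. Since $m\le q-1<q+1$, any monomial containing $y$ already has valuation $\ge q+1>m$. Among the monomials free of $y$, the valuations $a\le m$ are all distinct. Hence the condition at $O$ is exactly that $V(X_1,0,X_3)=\varepsilon X_1^m$.

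Symmetrically at $Y_\infty$, take affine coordinates $w=X_1/X_2$ and $z=X_3/X_2$. Here $w$ is a local parameter and $z$ has valuation $q+1$, because the equation becomes $z+z^q=w^{q+1}$. The condition at $Y_\infty$ is then $V(X_1,X_2,0)=\varepsilon' X_1^m$. Comparing the coefficient of $X_1^m$ in the two conditions gives $\varepsilon=\varepsilon'$. It follows that $V-\varepsilon X_1^m$ vanishes on both $X_2=0$ and $X_3=0$, so it is divisible by $X_2X_3$. This is exactly the form (\ref{eq10032026A}). Conversely, every such $V$ satisfies both conditions, by the same valuation count.

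For the dimension, the polynomials in (\ref{eq10032026A}) form a vector space spanned by the monomials of $g$ together with $X_1^m$, so it has dimension $\binom{m}{2}+1$. No nonzero member is divisible by the equation of $\cH_q$, since $m<q+1$. Hence distinct members cut distinct divisors, and the projective dimension is $\frac12 m(m-1)$.

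I expect the main obstacle to be making sure that the description of $\Lambda$ really gives the complete series $|\mathtt{G}|$, with the minimality of $\deg\cU$ and no hidden extra fixed part. As a cross-check I would use the Riemann--Roch form $\dim|\mathtt{G}|=m(q-1)-\frac12 q(q-1)+\iota(\mathtt{G})$. For this I would compute $\iota(\mathtt{G})$ as the number of independent curves of degree $q-2$ cutting a divisor $\succeq\mathtt{G}$ on $\cH_q$, which by the same local argument along the line $X_1=0$ is the dimension of degree-$(q-2)$ forms of a prescribed shape. The aim is to confirm that the result equals $\frac12 m(m-1)$.
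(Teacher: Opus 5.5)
Your proposal is correct, but its central step differs from the paper's.

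The paper does not analyze $\Lambda$ locally. It first computes $\iota(\mathtt{G})=\frac12(q-1-m)(q-m)$, noting that the curves of degree $q-2$ through $\mathtt{G}$ are exactly those of equation $X_1^mH=0$ with $\deg H=q-2-m$. That rests on the same peeling-off-the-chord argument you sketch for minimality. It then gets $\dim|\mathtt{G}|=\frac12 m(m-1)$ from Riemann--Roch. Finally it concludes $\Lambda=\Omega$ because the curves (\ref{eq10032026A}) lie in $\Lambda$ and already form a system of that dimension. So what you list as a cross-check is in fact the paper's whole proof.

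You instead determine $\Lambda$ exactly by the valuation count at $O$ and at $Y_\infty$. There $x$, resp.\ $w$, is a local parameter and $y$, resp.\ $z$, has order $q+1>m$. This gives both inclusions at once and the dimension by counting monomials, with no need for the speciality index.

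What each route buys:
\begin{itemize}
\item Your route is more self-contained. It also supplies the check, only asserted in the paper, that the curves (\ref{eq10032026A}) actually satisfy the two contact conditions.
\item The paper's route avoids local parameters. But as a dimension count it tacitly needs that distinct members of $\Lambda$ cut distinct divisors on $\cH_q$. That is exactly your observation that no nonzero member is divisible by the equation of $\cH_q$, since $m<q+1$.
\end{itemize}

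Both routes rest on the same residue-theorem description of $|\mathtt{G}|$ recalled in Section~\ref{sec2}. Your check that $m$ is the minimal degree of a curve through $\mathtt{G}$ covers a hypothesis of that description which the paper uses without verifying.
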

\begin{proof} Take the line of equation $X_1=0$ with multiplicity $m$. The arising curve (cycle) $\cU$ of degree $m$ passes through the points in the support of $\mathtt{G}$ and $\cH_q\cdot \cU=\mathtt{G}+m(O+Y_\infty)$. Therefore, the linear system $\Lambda$ consists of all curves $\cV$ of degree $m$ such that $I(O,\cH_q\cap \cV)\ge m$ and  $I(Y_\infty,\cH_q\cap \cV)\ge m$. The linear series $|\mathtt{G}|$ has length $m(q-1)$ and its index of speciality $\iota(\mathtt{G})$ equals $\frac{1}{2}(q-1-m)(q-m)$. In fact, the canonical divisors of $\cH_q$ are cut out by the curves of degree $q-2$ and those passing through  the points in the support of $\mathtt{G}$ have equations $X_1^m H(X_1,X_2,X_3)=0$ with $\deg(H(X_1,X_2,X_3))=q-2-m$. Therefore, $\iota(\mathtt{G})=\frac{1}{2}(q-1-m)(q-m)$. From  the (projective version of the) Riemann-Roch theorem,
$$\dim(|\mathtt{G}|)=m(q-1)-\textstyle{\frac{1}{2}}(q^2-q)+\textstyle{\frac{1}{2}}(q-1-m)(q-m)=\textstyle{\frac{1}{2}}m(m-1).$$
Therefore, $\dim(\Lambda)=\textstyle{\frac{1}{2}}m(m-1)$. On the other hand, the curves of equation (\ref{eq10032026A}) with $\deg(g(X_1,X_2,X_3)=m-2$ belong to $\Lambda$. These curves form a linear system $\Omega$ of dimension $\textstyle{\frac{1}{2}}m(m-1)$. Therefore $\Lambda=\Omega$.  
 \end{proof}
\begin{remark}
\label{rem19032026} \emph{The proof of Proposition \ref{pro09032026} can be used to show that if $m=1$ then  
$\dim(|\mathtt{G)}|)=0$, and $\Lambda$ consists of a unique curve, namely the line of equation $X_1=0$.   This shows that the case $m=1$ is trivial, and it justifies our hypothesis $m\ge 2$.  }
\end{remark}
 
In terms of function field theory, we have the following result. 
\begin{proposition} \label{pro10032026} 
Let $\mathbb{F}_{q^2}(x,y)$ with $y^q+y-x^{q+1}=0$ be the function field of 
 the Hermitian curve $\cH_q$ in its canonical affine equation $Y^q+Y-X^{q+1}=0$. On the chord $OY_\infty$ of $\cH_q$, take the points $Q_1,Q_2,\ldots,Q_{q-1}$ of $\cH_q$ other than $O$ and $Y_\infty$. For an integer $m$ with $2\le m \le q-1$, let $\mathtt{G}=m(Q_1+\ldots+Q_{q-1})$.
Then, 
$$\mathcal{L}(\mathtt{G})=\{f| f=\frac{yg(x,y)+\varepsilon x^m}{x^m}, \deg(g(X,Y))\le m-2 \},$$
and
$$\deg(\mathcal{L}(\mathtt{G}))=m(q-1),\,\,\dim(\mathcal{L}(\mathtt{G}))=\textstyle{\frac{1}{2}}m(m-1)+1.$$ 
\end{proposition}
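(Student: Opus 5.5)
Proposition \ref{pro10032026} is the affine, function-theoretic reading of Proposition \ref{pro09032026}, and I would derive it by dehomogenizing that result. Set $X=X_1/X_3$, $Y=X_2/X_3$, so that $x,y$ are the coordinate functions on $\cH_q$ with $y^q+y=x^{q+1}$. The points $Q_1,\ldots,Q_{q-1}$ here are the points $P_2,\ldots,P_q$ of (\ref{eq09032026E}), so the divisor $\mathtt{G}$ is the same. Following the geometric description of $\mathcal{L}(\mathtt{G})$ in Section \ref{sec2}, I take the reference curve $\cU$ to be the line $X_1=0$ counted $m$ times. By the proof of Proposition \ref{pro09032026}, $\cH_q\cdot\cU=\mathtt{G}+m(O+Y_\infty)$. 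The linear system $\Lambda$ is the family (\ref{eq10032026A}), so $\mathcal{L}(\mathtt{G})$ is spanned by the quotients $V(X_1,X_2,X_3)/X_1^m$ evaluated at $(x,y,1)$ with $\cV\in\Lambda$. Dehomogenizing (\ref{eq10032026A}) gives $(y\,g(x,y)+\varepsilon x^m)/x^m$ with $\deg g\le m-2$, which is the stated form. Taking $g=0$ and $\varepsilon=1$ recovers the constant function $1$.

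I would then check directly that every such $f$ lies in $\mathcal{L}(\mathtt{G})$, so the argument does not lean entirely on the linear-series formalism.
\begin{itemize}
\item[(a)] On $\cH_q$, ${\rm div}(x)=P_1+\ldots+P_{q+1}-(q+1)Y_\infty$. Hence $x^{-m}$ has poles of order $m$ exactly at $P_1,\ldots,P_{q+1}$ away from $Y_\infty$, and a zero of order $m(q+1)$ at $Y_\infty$.
\item[(b)] The numerator $y\,g(x,y)$ vanishes at $O$ to order at least $q+1\ge m$, since $v_O(y)=q+1$. This cancels the pole at $P_1=O$.
\item[(c)] At $Y_\infty$ we have $v_{Y_\infty}(x)=-q$ and $v_{Y_\infty}(y)=-(q+1)$. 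Every monomial $x^iy^{j+1}$ with $i+j\le m-2$ therefore has pole order at most $q+1+(m-2)(q+1)=(m-1)(q+1)$. This is less than $m(q+1)$, so $f$ is regular at $Y_\infty$.
\end{itemize}
So $\mathrm{div}(f)+\mathtt{G}\succcurlyeq 0$, and because the coefficients are in $\mathbb{F}_{q^2}$, $f$ lies in $\mathcal{L}(\mathtt{G})$ over $\mathbb{F}_{q^2}$.

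For the dimension, $\deg(\mathtt{G})=m(q-1)$ is immediate. Proposition \ref{pro09032026} gives $\dim|\mathtt{G}|=\frac12 m(m-1)$, and $\ell(\mathtt{G})=\dim|\mathtt{G}|+1$ gives $\frac12m(m-1)+1$. To see that the displayed space has exactly this dimension, I count parameters: $g$ has $\binom{m}{2}=\frac12m(m-1)$ coefficients, plus one for $\varepsilon$. It then remains to show that the functions $1$ and $x^iy^{j+1}/x^m$ ($i+j\le m-2$) are linearly independent in $\mathbb{F}_{q^2}(x,y)$. A relation among them would give a polynomial $\varepsilon X^m+Y\,g(X,Y)$ of degree $m<q+1$ vanishing on $\cH_q$. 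That forces it to be divisible by $Y^q+Y-X^{q+1}$, which is impossible by degree unless it is zero.

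The main obstacle is justifying that the affine quotients describe the whole Riemann--Roch space and not just a subspace. Point (c) is where care is needed: one must confirm that the pole at $Y_\infty$ is cancelled, which holds only because $\deg g\le m-2$. The equality itself would follow from the independence argument above combined with the Riemann--Roch count from Proposition \ref{pro09032026}. The same reasoning applies unchanged over $\mathbb{F}_{q^2}$, since all curves in $\Lambda$ can be taken to be defined over that field.
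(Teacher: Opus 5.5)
Your proposal is correct and follows the paper's route. The paper gives no separate proof: it reads the statement off Proposition \ref{pro09032026} through $\mathcal{L}(\mathtt{G})=\langle 1,w_1/u,\ldots,w_{r-1}/u\rangle$ with $u=x^m$, and your membership check, independence argument and Riemann--Roch count only make explicit what that proposition's proof asserts briefly.

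There is one slip to fix in (a) and (c). Since ${\rm div}(x)=P_1+\cdots+P_q-qY_\infty$, as your own $v_{Y_\infty}(x)=-q$ says, $x^{-m}$ has a zero of order $mq$ at $Y_\infty$, not $m(q+1)$. The right comparison is therefore $(m-1)(q+1)<mq$, i.e.\ $m<q+1$, which gives $v_{Y_\infty}(f)\ge q+1-m>0$. With your threshold $m(q+1)$ the test would wrongly accept $y/x=x^{m-1}y/x^m$, which has a pole at $Y_\infty$.
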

%\begin{equation}
%\label{eq09032026E}
%\mathtt{G}=n(P_1+P_{q+1})+m(P_2+\ldots + P_{q-1}).
%\end{equation}
\section{A family of cyclic functional AG codes on the Hermitian curve}
We keep up our notation $\cH_q$, $\Gamma$, $\cC_\tau$, $m$, $\mathtt{G}$ from Section \ref{sec2}. Moreover, let $D=\Omega$, that is, for a generator $\omega$ of the multiplicative group of $\mathbb{F}_{q^2}$, $D$  comprises all (pairwise distinct) points $Q_i=(\omega^i u:\omega^{(q+1)i}v:1)$ where $i=1,\ldots, q^2-1$,  and $(u,v)$ with $v^q+v=u^{q+1}$ and $u\ne 0$, represents a fixed affine point of $\cH_q$. 
\begin{theorem}
\label{mainth}
The functional algebraic geometry code $C_\mathcal{L}(\mathtt{D},\mathtt{G})$ is an 
 $$[q^2-1,\textstyle{\frac{1}{2}}m(m-1)+1,d]_{q^2}$$
linear code whose minimum distance $d$ is at most $q^2-1-(m-2)(q+1)$ and at least $q^2-1-(q(m-1)-1)$. The improvement on the designed minimum distance is at least $q+1-m$. 
\end{theorem}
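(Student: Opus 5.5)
Length is $|D|=|\Omega|=q^2-1$. The dimension is $\dim\mathcal{L}(\mathtt{G})=\frac{1}{2}m(m-1)+1$ by Proposition \ref{pro10032026}, provided $ev_{\mathtt D}$ is injective. Injectivity will follow from the lower bound on $d$, since that bound is positive.

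\textbf{Step 1: zeros of a nonzero codeword.} Let $f\in\mathcal{L}(\mathtt{G})$ be nonzero. By Proposition \ref{pro09032026}, the divisor ${\rm div}(f)+\mathtt{G}$ equals $\cH_q\cdot\cV-m(O+Y_\infty)$, where $\cV$ is a curve of degree $m$ of the form (\ref{eq10032026A}). The zeros of $f$ in $D$ are then the points of $\Omega\cap\cV$.

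\textbf{Step 2: bound $|\Omega\cap\cV|$ by working on $\cC_\tau$.} Since $\Omega\subset\cC_\tau$ (Lemma \ref{lem09032026A}), we have $|\Omega\cap\cV|\le \sum_{Q\in\Omega} I(Q,\cC_\tau\cap\cV)$, provided $\cC_\tau$ is not a component of $\cV$. This holds because $\cC_\tau$ is irreducible of degree $q+1>m$.

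By B\'ezout, $\deg(\cC_\tau\cdot\cV)=m(q+1)$. Next I bound the contributions at $O$ and $Y_\infty$ from below.

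At $O$, parametrize $\cC_\tau$ by $x=t$, $y=\tau t^{q+1}$. Then $yg+\varepsilon x^m$ becomes $\tau t^{q+1}g(t,\tau t^{q+1})+\varepsilon t^m$. If $\varepsilon\ne0$, its order is exactly $m$. If $\varepsilon=0$, its order is at least $q+1$.

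At $Y_\infty$, use the parameter $s=1/t$ and compare pole orders. The term $y\,g(x,y)$ has a pole of order $(q+1)(1+e)$, where $e$ is the top degree of $g$ in $y$. The term $x^m$ has a pole of order $m$. The intersection at $Y_\infty$ is $m(q+1)$ minus the pole order of the affine polynomial. Hence:
\begin{itemize}
\item if $g$ contains the monomial $Y^{m-2}$, that term dominates and gives intersection $0$ at $Y_\infty$;
\item otherwise the $X_3$-factor forces a positive intersection there.
\end{itemize}

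This case analysis gives the upper bound on zeros. Even in the worst case the contributions at $O$ and $Y_\infty$ together are at least $m+(q+1)-\ldots$, which I would organize to yield $|\Omega\cap\cV|\le q(m-1)-1$.

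A cleaner alternative is to count zeros along $\Gamma$-invariant data. Substituting the points $Q_i=(\omega^iu:\omega^{(q+1)i}v:1)$ turns $f$ into a polynomial in $\lambda=\omega^i$ of degree at most $(q+1)(m-1)$ (from $y\cdot y^{m-2}$). The term $x^m$ then forces the low-degree part to start at $\lambda^m$, or else at $\lambda^{q+1}$. Dividing by the lowest power and using that the top and bottom exponents differ by at most $(q+1)(m-1)-m$ gives at most $q(m-1)-1$ roots among the $q^2-1$ values of $\lambda$. This yields
$$d\ge q^2-1-(q(m-1)-1).$$
I expect the main obstacle to be this exponent bookkeeping, namely getting exactly $q(m-1)-1$ rather than a cruder bound. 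I would handle it with the $\lambda$-polynomial approach, treating the cases $\varepsilon\ne0$ and $\varepsilon=0$ separately.

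\textbf{Step 3: upper bound on $d$.} Take $\varepsilon=0$ and $g=X_1^{0}\prod_{i=1}^{m-2}(X_2-a_iX_3)$, so that $\cV=\cS$. Lemma \ref{lem09032026D} shows that $\cS\cdot\cC_\tau$ contains the $(m-2)(q+1)$ distinct points $Q_{i,j}$. These points lie on $\cC_\tau$ with $y=a_i$, and I need them to lie in $\Omega$. To arrange this, choose the $a_i$ as values $\omega^{(q+1)k}v$ for suitable $k$, so that the lines $\ell_i$ meet $\Omega$; each such line then contains $q+1$ points of $\Omega$. This choice is compatible with the requirement $a_i/\tau\in\mathbb{F}_q$ once $v/\tau\in\mathbb{F}_q$ is checked, which follows from $\tau=1/(v^{q-1}+1)$, since $v/\tau=v^q+v\in\mathbb{F}_q$. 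The resulting codeword has weight $q^2-1-(m-2)(q+1)$.

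\textbf{Step 4: improvement over the designed distance.} The designed distance is $q^2-1-m(q-1)$. Subtracting,
$$m(q-1)-(q(m-1)-1)=q+1-m,$$
which is the claimed improvement.
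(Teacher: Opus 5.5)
Your proposal is correct and is essentially the paper's proof. Your substitution $x=\lambda u$, $y=\lambda^{q+1}v$ is the paper's elimination $Y=\tau X^{q+1}$ up to the scaling $X=u\lambda$, and gives the same degree bound $q(m-1)-1$. The upper bound uses the same curve $\mathcal{S}$, and your check that $v/\tau=v^q+v\in\mathbb{F}_q^*$ (so the $Q_{i,j}$ really lie in $\Omega$) fills a step the paper leaves implicit. Deducing injectivity from the zero count is also cleaner than the paper's auxiliary curve $\mathcal{W}$.

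Do drop the abandoned intersection-multiplicity sketch, because its claim of intersection $0$ at $Y_\infty$ is false. The form $X_2X_3g+\varepsilon X_1^m$ always vanishes at $Y_\infty$, and in fact $I(Y_\infty,\cC_\tau\cap\cV)\ge q+1$. Together with $I(O,\cC_\tau\cap\cV)\ge m$, B\'ezout would then have given $|\Omega\cap\cV|\le m(q+1)-m-(q+1)=q(m-1)-1$ directly.
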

\begin{proof} We begin by showing that $ev_{\mathtt D}$ is injective. If there exists some non-zero function  $f\in \mathcal{L}(\mathtt{G})$ such that $f(Q_i)=0$ for any $1\le i\le q^2-1$, then Proposition \ref{pro09032026} ensures the existence of a curve $\cV$ of equation (\ref{eq10032026A}) passing through each $Q_i$. If this is the case, then there exists a curve $\mathcal{W}$ of degree $m-2$ with the same property, so that each point $Q_i$ is shared by $\cC_\tau$ and $\mathcal{W}$. Since $\cC_\tau$ is irreducible over $\mathbb{K}$, and $\deg(\cC_\tau)=q+1>m-2$, B\'ezout's theorem yields that the number of common points of $\cC_\tau$ and $\mathcal{W}$ does not exceed $(q+1)(m-2)$. Actually, this number is smaller than $q^2-1$, and therefore there is no curve $\cV$ with the required property, and hence if $f\in\mathcal{L}(\mathtt{G})$ and $f(Q_i)=0$ for $i=1,\ldots,q^2-1$, then $f=0$. Therefore, $ev_{\mathtt D}$ is injective.  From Proposition \ref{pro10032026}, $\dim(\mathcal{L}(\mathtt{D},\mathtt{G}))=\textstyle{\frac{1}{2}}m(m-1)+1$.

To prove the lower bound on the minimum distance $d$, it is necessary to show that any
curve $\cV$ of equation (\ref{eq10032026A}) contains at most $q(m-1)-1$ points in $D$. Since $D$ is contained in $\cC_\tau$ but it possesses no point on the lines of equations $X_2=0$ and $X_3=0$, it is enough to show that if $g(X,Y)\in \mathbb{F}_{q^2}[X,Y]$ with $\deg(g(X,Y))\le m-2$, then the system of equations
$$\begin{cases}
Yg(X,Y)+\varepsilon X^m=0,\, \varepsilon \in \mathbb{F}_{q^2}, \\
\mbox{$Y-\tau X^{q+1}=0,\, \tau\ne 0$}
\end{cases}
$$
has at most $q(m-1)-1$ solutions $(\xi,\eta)$ with $\xi\ne 0$. Eliminating $Y$ gives equation  
$\tau X^{q+1}g(X,\tau X^{q+1})+\varepsilon X^m=0$. 
Its non-zero roots are also roots of the polynomial $p(X)=\tau X^{q+1-m} g(X,\tau X^{q+1})+\varepsilon$.   Since $\deg(g(X,Y))\le m-2$, the highest power of $X$ in $g(X,\tau X^{q+1})$ is at most $(q+1)(m-2)$. Therefore, $\deg(p(X))\le q+1-m+(q+1)(m-2)=q(m-1)-1$. 

To show the upper bound on the minimum distance $d$, it is sufficient to exhibit a curve $\cV$ of equation (\ref{eq09032026E}) passing through $(q+1)(m-2)$ pairwise distinct points in $D$. By Lemma \ref{lem09032026D} such a curve is $\mathcal{S}$ as defined in (\ref{eq10032026B}). 
\end{proof}
Magma supported computation for small values of $q$ suggests that the true minimum distance of $C_\mathcal{L}(\mathtt{D},\mathtt{G})$ may hit the upper bound in Theorem \ref{mainth}, namely $q^2-1-(q+1)(m-2)$. We show that this is the case when $m\le 3$.

%Passing to affine coordinates
%\[
%X=\frac{X_1}{X_3}, \qquad Y=\frac{X_2}{X_3},
%\]
%the points of  $D$ are of the form
%\[
%(X,Y)=(X,aX^{q+1}), \qquad X\in\mathbb{F}_{q^2}^*,
%\]
%where $a\in\mathbb{F}_{q^2}$ satisfies $a^q+a=1$.

%The  curve $\cV$ has affine equation
%\[
%f(XY)=Yg(X,Y)+\varepsilon X^{m}=0,\]
% with $\deg g\le m-2$.
%Therefore
%\[
%f(X,aX^{q+1})
%=aX^{q+1}g(X,aX^{q+1})+\varepsilon X^m.
%\]
%Since $X\neq 0$ on $D$, dividing by $X^m$ we obtain
%\[
%h(X):=X^{-m}f(X,aX^{q+1})
%=aX^{q+1-m}g(X,aX^{q+1})+\varepsilon.
%\]

%If $X^iY^j$ is a monomial of $g$, with $i+j\le m-2$, then after substituting
%$Y=aX^{q+1}$ it contributes to $h(X)$ a term of degree
%\[
%i+(q+1)j+q+1-m.
%\]
%Hence
%\[
%\deg h\le \max_{i+j\le m-2}\bigl(i+(q+1)j+q+1-m\bigr)=q(m-1)-1.
%\]

%Since $f\neq 0$, also $h\neq 0$, and so $h$ has at most $q(m-1)-1$ zeros.
%It follows that $f$ vanishes in at most $q(m-1)-1$ points of $D$.
%Therefore every non-zero codeword has weight at least
%\[
%(q^2-1)-(q(m-1)-1),
%\]
%and
%\[
%d\ge q^2-1-(q(m-1)-1).
%\]

%Finally, since
%\[
%\delta=(q^2-1)-\deg G=(q^2-1)-m(q-1),
%\]
%we get
%\[
%d-\delta\ge q+1-m,
%\]
%and the proof is complete.
\subsection{Case m=2}
\label{ss2}

\begin{theorem}
\label{th03042026B}    
Let $m=2$. Then  $C_\mathcal{L}(\mathtt{D},\mathtt{G})$ is a 
$[q^2-1, 2, q^2-q]_{q^2}$ cyclic two-weight code with non-zero weights $q^2-q$ and $q^2-1$. The codewords with weights $q^2-q$ are as many as $(q^2-1)(q+1)$, while those with weights $q^2-1$ are as many as $q(q-1)(q^2-1)$.  

%$
%q^2 - q$ and $ q^2 - 1$.
%Furthermore, the number of codewords of each weight is
%\[
%A_{q^2-q} = (q^2 - 1)(q + 1), \qquad
%A_{q^2-1} = q(q - 1)(q^2 - 1).
%\]
\end{theorem}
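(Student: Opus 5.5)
For $m=2$ the polynomial $g$ in Proposition \ref{pro10032026} is a constant, say $c$. Hence $\mathcal{L}(\mathtt{G})=\langle 1, y/x^2\rangle$, so every codeword is the evaluation on $D$ of $f=(cy+\varepsilon x^2)/x^2$ with $(c,\varepsilon)\in\mathbb{F}_{q^2}^2$. Theorem \ref{mainth} already gives length $q^2-1$ and dimension $2$. I would obtain the weight distribution by counting directly the zeros of $f$ on $D$.

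On $D$ we have $y=\tau x^{q+1}$ with $x=\omega^i u$ ranging over all of $\mathbb{F}_{q^2}^*$ as $i$ ranges over $1,\ldots,q^2-1$. So at the point of $D$ with first coordinate $\xi$,
$$f=c\tau\xi^{q-1}+\varepsilon.$$
I then split into cases.

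If $c=0$ and $\varepsilon\neq 0$, the codeword is the constant $\varepsilon$, of weight $q^2-1$. This gives $q^2-1$ codewords.

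If $c\ne 0$, the zeros are the $\xi\in\mathbb{F}_{q^2}^*$ with $\xi^{q-1}=-\varepsilon/(c\tau)$.
\begin{itemize}
\item If $\varepsilon=0$ there are no zeros, so the weight is $q^2-1$. This gives $q^2-1$ codewords.
\item If $\varepsilon\ne0$, the map $\xi\mapsto\xi^{q-1}$ sends $\mathbb{F}_{q^2}^*$ onto the subgroup $\mu_{q+1}$ of $(q+1)$-th roots of unity, and it is $(q-1)$-to-one. So the number of zeros is $q-1$ when $-\varepsilon/(c\tau)\in\mu_{q+1}$ and $0$ otherwise.
\end{itemize}

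For fixed $c\neq 0$, exactly $q+1$ values of $\varepsilon$ satisfy $-\varepsilon/(c\tau)\in\mu_{q+1}$. These give $(q^2-1)(q+1)$ codewords of weight $q^2-1-(q-1)=q^2-q$. The remaining choices, namely $c\ne0$ with $\varepsilon\ne0$ and $\varepsilon$ not of that form, number $(q^2-1)(q^2-1-(q+1))=(q^2-1)(q^2-q-2)$, and they have weight $q^2-1$.

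Summing the three classes of weight $q^2-1$ gives
$$(q^2-1)\bigl(1+1+q^2-q-2\bigr)=(q^2-1)q(q-1),$$
which matches the statement. As a check, $(q^2-1)(q+1)+q(q-1)(q^2-1)=(q^2-1)(q^2+1)=q^4-1$, which is the number of non-zero codewords. So the minimum distance is $q^2-q$; this agrees with the upper bound $q^2-1-(m-2)(q+1)$ in Theorem \ref{mainth} only after the correction by $q-1$, so I would rely on the direct count.

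Cyclicity comes from the $\Gamma$-invariance of $D$ and $\mathtt{G}$, as in the sigma-method. Concretely, $\Gamma$ acts on $D$ as a single $(q^2-1)$-cycle, and substituting $x\mapsto\lambda x$, $y\mapsto \lambda^{q+1}y$ preserves $\langle 1,y/x^2\rangle$, since $y/x^2\mapsto\lambda^{q-1}y/x^2$.

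The main obstacle is to justify the parametrization: that the first coordinates of the points of $D$ run exactly once over $\mathbb{F}_{q^2}^*$, and that $y=\tau x^{q+1}$ holds on $D$. The first follows from $u\neq 0$ and the fact that $\omega$ generates $\mathbb{F}_{q^2}^*$; the second is Lemma \ref{lem09032026A}. With these in hand, the rest is the counting above.
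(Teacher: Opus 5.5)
Your proof is correct and follows the paper's argument: you write $f=c\,y/x^2+\varepsilon$, use $y=\tau x^{q+1}$ on $D$ to reduce the zeros of $f$ to the solutions of $\xi^{q-1}=-\varepsilon/(c\tau)$ in $\mathbb{F}_{q^2}^*$, and count. Your bookkeeping is cleaner than the paper's written version, which says $q+1$ roots where $q-1$ is meant and leaves the class $c\neq 0,\ \varepsilon=0$ implicit; also, $q^2-q$ is exactly the lower bound of Theorem \ref{mainth} for $m=2$, so your remark about the upper bound is harmless.
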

\begin{proof} In this case $q\ge 3$, and the code  $C_\mathcal{L}(\mathtt{D},\mathtt{G})$ has dimension $2$ over $\mathbb{F}_{q^2}$. % and contains $q^4$ codewords.
Every function $f \in \mathcal L( \mathtt{ G})$ is in the form 
$$
f=f(x,y) = a \frac{y}{x^2} + \varepsilon,
\qquad a,\varepsilon \in \mathbb{F}_{q^2}.
$$
If $a=0$, then $f$ has no zero on $D$ for $\varepsilon\in \mathbb{F}_{q^2}$, $\varepsilon \ne 0$, and this case occurs  $q^2-1$ times.

Assume that $a\neq 0$. Then the zeros of $f$ on $D$ are the roots in  $\mathbb{F}_{q^2}$ of the polynomial 
$$X^{q-1} =c,\,\mbox{with}\,\,  c=-\frac{\varepsilon}{a\tau}.$$
The polynomial $X^{q-1}-c$ with $c\in \mathbb{F}_{q^2}$, $c\neq 0$, has no root in $\mathbb{F}_{q^2}$ unless $c^{q+1}=1$, and in the exceptional case it has exactly $q+1$ roots. Thus, there are exactly $q^2-1-(q-1)$ values of $c$ for which the polynomial $X^{q-1}-c$ has no root in $\mathbb{F}_{q^2}$, and there are exactly $q+1$ values of $c$ for which it does. Thus the claim follows from the fact that each $c\in \mathbb{F}_{q^2}$, $c\neq 0$ is obtained exactly $q^2-1$ times when $a$ and $\varepsilon$ range over the non-zero elements in $\mathbb{F}_{q^2}$.   
%Since $C_\mathcal{L}(\mathtt{D},\mathtt{G})$  has as many as $q^4-1$ non zero codewords, the claim follows. 
\end{proof}
\subsection{Case m=3}
\label{ss3}
For $m=3$, the geometry of the functional code $C_\mathcal{L}(\mathtt{D},\mathtt{G})$ allows us, once again, to compute the true value of the minimum distance.  
\begin{proposition}
\label{pro13032026} For $m=3$,  the minimum distance $d$ in Theorem \ref{mainth} is equal to $q^2-q-2$, and hence it attains the upper bound.
\end{proposition}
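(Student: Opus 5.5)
By Theorem \ref{mainth} the curve $\cS$ of Lemma \ref{lem09032026D} (for $m=3$ it consists of $X_3=0$, $X_2=0$ and one line $X_2=a_1X_3$) already yields a codeword of weight $q^2-1-(q+1)=q^2-q-2$. It therefore remains to prove the lower bound. The plan is to show that every curve $\cV$ of equation (\ref{eq10032026A}) with $m=3$ and $\cV\neq \cU$ contains at most $q+1$ points of $D$. This improves the general bound $q(m-1)-1=2q-1$.

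Write $g=\alpha+\beta X+\gamma Y$. Every $x\in\mathbb{F}_{q^2}^*$ occurs as the abscissa of exactly one point of $D$, namely $(x,\tau x^{q+1})$, because $v=\tau u^{q+1}$ by Lemma \ref{lem09032026A}. As in the proof of Theorem \ref{mainth}, the points of $D$ on $\cV$ therefore correspond bijectively to the roots $x\in\mathbb{F}_{q^2}^*$ of
$$p(X)=\gamma\tau^2X^{2q-1}+\beta\tau X^{q}+\alpha\tau X^{q-2}+\varepsilon .$$
If $\gamma=0$, then $\deg p\le q$, so $p$ has at most $q$ roots and we are done. This covers also the degenerate subcases.

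Assume now $\gamma\ne 0$. The idea is to use the Frobenius map $x\mapsto \bar x=x^q$ and the norm $N=x^{q+1}\in\mathbb{F}_q^*$. Multiplying $p(x)=0$ by $x^2$ and using $x^{2q+1}=N\bar x$ and $x^{q+2}=Nx$ turns it into the identity
$$\bar x\,(\gamma\tau^2N+\alpha\tau)=-\bigl(\beta\tau N x+\varepsilon x^2\bigr).$$
Applying Frobenius gives a conjugate identity. Eliminating $\bar x$ between the two identities, and adding the relation $x\bar x=N$, produces for each fixed $N$ a polynomial equation in $x$ of small degree. I would then count pairs $(x,N)$ as follows. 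One way is to view the solutions as points on the intersection of two explicit curves in the $(x,N)$-plane and bound them by B\'ezout's theorem. Another is to note that $x$ determines $\bar x$ rationally, via $\bar x=-(\beta\tau Nx+\varepsilon x^2)/(\gamma\tau^2N+\alpha\tau)$, and then to check that $x\mapsto x^q$ forces $x$ to lie on a Baer-subline-like set: a circle of the inversive plane $\mathbb{F}_{q^2}\cup\{\infty\}$, which has exactly $q+1$ points.

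The key step, and the main obstacle, is this last count. A naive count of three solutions for each of the $q-1$ values of $N$ is far too weak. What is needed is to show that the solution set is contained in a single set of size $q+1$: a line of $AG(2,q^2)$ meeting $\cC_\tau$ as in Lemma \ref{lem09032026D}, or a circle. I would first test whether the solutions $x$ satisfy an equation of the shape $A x\bar x+Bx+\bar B\bar x+C=0$ with $A,C\in\mathbb{F}_q$, since such Hermitian-type equations have at most $q+1$ solutions. If that fails, I would bound the number of pairs $(x,N)$ directly via B\'ezout, using the irreducibility argument from the proof of Theorem \ref{mainth}.
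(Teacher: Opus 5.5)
Your treatment of the upper bound (via $\cS$) and of the case $\gamma=0$ is fine. The gap is the case $\gamma\neq0$, which is the whole content of the proposition: you list strategies, and none of them, as stated, gives $q+1$.

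There is also a slip: $\tau X^{q-2}\cdot\beta X=\beta\tau X^{q-1}$. So the middle term of $p$ is $\beta\tau X^{q-1}$, and your identity should read $\bar x(\gamma\tau^2N+\alpha\tau)=-(\beta\tau N+\varepsilon x^2)$.

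The target you set, that all zeros lie in one set of size $q+1$, is the right mechanism only when $\varepsilon=0$. In that case the identity is divisible by $\bar x$, and the zeros are the roots of $\gamma\tau^2X^{q+1}+\beta\tau X+\alpha\tau$, a polynomial of degree $q+1$. This is exactly where the bound is attained, and you never isolate this case.

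For $\varepsilon\neq0$, your expression for $\bar x$ still involves $N=x^{q+1}$. It is therefore not a M\"obius relation in $x$ and produces no circle; what holds there is a bound by a constant. Your fallbacks do not deliver it:
\begin{itemize}
\item B\'ezout between $\cV$ and $\cC_\tau$, as in Theorem \ref{mainth}, yields at best $3(q+1)-3-(q+1)=2q-1$, the very bound you are trying to improve.
\item Counting at most three roots of the cubic $\varepsilon x^3+\beta\tau Nx+\gamma\tau^2N^2+\alpha\tau N=0$ for each $N\in\mathbb{F}_q^*$ gives only $3(q-1)$.
\end{itemize}

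The paper separates $\varepsilon=0$ from $\varepsilon=1$. For $\varepsilon=1$ it sorts the zeros by $\eta=\tau N$, i.e. by the lines $Y=\eta$. Three zeros in one class satisfy $\sum\xi_i=0$ and $\sum\xi_i^q=N\sum\xi_i^{-1}=0$. Hence $\xi_1^3=\xi_2^3=\xi_3^3$, $\beta=0$ and $3\mid q+1$. A norm computation then leaves at most two values of $x^{q+1}$, each carrying at most three zeros, so at most six. Be aware that its claim that a class cannot contain exactly two zeros is not justified as written: the third root of the cubic lies in $\mathbb{F}_{q^2}$ but need not have norm $N$.

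A uniform way to close your gap is to make your idea of pairing the identity with its conjugate precise. Work with B\'ezout in the plane with coordinates $(X,Y)$, where $Y$ plays the role of $\bar x$. Each zero $x$ gives a point $(x,x^q)$ common to the two cubics $\varepsilon X^2+\beta\tau XY+\gamma\tau^2XY^2+\alpha\tau Y=0$ and $\bar\varepsilon Y^2+\bar\beta\bar\tau XY+\bar\gamma\bar\tau^2X^2Y+\bar\alpha\bar\tau X=0$.
\begin{itemize}
\item If $\varepsilon\alpha\gamma\neq0$: the first cubic is irreducible, and the second is not a multiple of it, since only the first contains $X^2$. Both pass through the origin, $(1:0:0)$ and $(0:1:0)$, so there are at most $9-3=6$ zeros.
\item If $\alpha=0$: the lines $X=0$ and $Y=0$ split off, and the two residual conics leave at most $3$ zeros.
\end{itemize}
Together with the cases $\varepsilon=0$ and $\gamma=0$, this proves the proposition for $q\ge5$. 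The case $q=4$, where $6>q+1$, still needs a separate check.
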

\begin{proof} As in the proof of Theorem \ref{mainth}, it is necessary to show that any
curve $\cV$ of equation (\ref{eq10032026A}) contains at most $q+1$ points in $D$. Since $D$ is contained in $\cC_\tau$ but it possesses no point on the lines of equations $X_2=0$ and $X_3=0$, it is enough to show that if $g(X,Y)=b_0+b_1X+b_2Y\in \mathbb{F}_{q^2}[X,Y]$, then the system of equations
\begin{equation}
\label{eq04032026}
\begin{cases}
Y(b_0+b_1X+b_2Y)+\varepsilon X^3=0,\, \varepsilon \in \mathbb{F}_{q^2}, \\
\mbox{$Y-\tau X^{q+1}=0,\, \tau\ne 0$}
\end{cases}
\end{equation}
has at most $q+1$ solutions $(\xi,\eta)$ with $\xi\ne 0$. 

If $\varepsilon =0$, then $Y$ can be dismissed in the first equation. In other words, the first equation reads $b_0+b_1X+b_2Y=0$. Hence, the solutions of the system are given by the roots of the polynomial $b_0+b_1X+\tau b_2X^{q+1}$ and the claim follows for $\varepsilon=0$.    

Therefore, $\varepsilon=1$ may be assumed. We may also suppose that $\cV$ has no linear components of equation $Y-c$. 

Choose a non-zero element  $\eta \in \mathbb{F}_{q^2}$, from the value set of the polynomial $Y=\tau X^{q+1}$. Two cases arise according as the number of common points of $D$ with the line $\ell_\eta$ of equation $Y-\eta$ is at most one, or at least two. If the former case occurs for any non-zero $\eta \in \mathbb{F}_{q^2}$, then $\cV$ meets $D$ in at most $q-1$ points, and the claim is proven.   

Assume that the number of common points of $D$ and $\ell_\eta$  is at least two. Actually, that number cannot be two. In fact, if $Q=(\xi,\eta)\in \ell_\eta\cap \cV$ were not defined over $\mathbb{F}_{q^2}$, i.e. $\xi\not\in \mathbb{F}_{q^2}$ but $\eta\in \mathbb{F}_{q^2}$, then the Frobenius image $Q^{q^2}=(\xi^{q^2},\eta)$ of $Q$ would be a fourth common point of $\cV$ and $\ell_\eta$, whereas $\ell_\eta$ is not a component of $\cV$. Therefore, $\ell_\eta$ meets $D$ 
in three distinct points $Q_i=(\xi_i,\eta)$, $i=1,2,3$, lying in $PG(2,q^2)$. Then  $\xi_i\in \mathbb{F}_{q^2}$ is a root of the polynomial $h(X)=X^3-\eta b_1 X-\eta^2 b_2 -\eta b_0$. Since $X^2$ is missing in $h(X)$, the sum $\xi_1+\xi_2+\xi_3$ vanishes. Therefore, $\xi_1^q+\xi_2^q+\xi_3^q=0$. Since $\eta=\tau \xi_i^{q+1}$ for $i=1,2,3$, this yields that, up to the non-zero constant $\eta/\tau$,    
$$\frac{1}{\xi_1}+\frac{1}{\xi_2}=-\frac{1}{\xi_3}, $$
whence $\xi_3(\xi_1+\xi_2)=-\xi_1\xi_2$. Therefore, by $\xi_1+\xi_2=-\xi_3$, 
\begin{equation}
\label{eq15032026}
\xi_1\xi_2=\xi_3^2.
\end{equation}
Changing the roles of $\xi_3$ by $\xi_1$ and then also by $\xi_2$, (\ref{eq15032026}) reads 
\begin{equation}
\label{eq15032026A}
\mbox{$\xi_2\xi_3=\xi_1^2$ and $\xi_1\xi_3=\xi_2^2,$}
\end{equation}
respectively. From (\ref{eq15032026}) and (\ref{eq15032026A}), 
\begin{equation}
\label{eq15032026B}
\rho=\xi_1^3=\xi_2^3=\xi_3^3,
\end{equation}
and hence $(\xi_2/\xi_1)^3=1$. Since $(\xi_2/\xi_1)^{q+1}=1$,  this implies $q\equiv -1 \pmod{3}$. 
Furthermore, both $\xi_1$ and $\xi_2$ are roots of the linear polynomial 
$\eta b_1 X+\eta b_0+\eta^2 b_2-\rho$. Therefore, $b_1=0$. Thus, $\cV$ has equation 
\begin{equation}
\label{eq16032026G} 
Y(b_0+b_2Y)-X^3=0.
\end{equation}
The intersection of $\cV$ with $\cC_\tau$, whenever restricted to $D$, comprises the points $D_i=(\xi:\eta:1)$ whose coordinates $\xi$ are the solutions  in $\mathbb{F}_{q^2}$ of Equation 
\begin{equation}
\label{eq16032026F} b_2\tau^2 X^{2q-1}+b_0 \tau X^{q-2}=1.
\end{equation}
Since $\xi^{q^2}=\xi$,  raising to the $q$-th power shows that $\xi$ is also a solution in $\mathbb{F}_{q^2}$ of Equation
$$b_2^q\tau^{2q}\frac{1}{X^{q-2}}+b_0^q\tau^q\frac{1}{X^{2q-1}}=1.$$
Multiplying the above two Equations gives
\begin{equation}
\label{eq16032026H}
c_1 X^{q+1}+c_2\frac{1}{X^{q+1}}+c_3=0,
\end{equation}
with $c_1,c_2,c_3\in \mathbb{F}_q$ 
where 
$$c_1=\operatorname{N}(\tau)^2\operatorname{N}(b_2),\,\, c_2=\operatorname{N}(\tau)\operatorname{N}(b_0),\,\, c_3=\operatorname{Tr}(b_0^qb_2\tau^{q+2}-1)$$ and  $\operatorname{N}$ and $\operatorname{Tr}$ denote the norm and trace functions  from $\mathbb{F}_{q^2}$ to $\mathbb{F}_q$.

In particular, the solutions of (\ref{eq16032026H}) arise from those of the degree two equation $c_1Y^2+c_3Y+c_2=0$ over $\mathbb{F}_q$.  
Assume now that (\ref{eq16032026F}) has more than six solutions in $\mathbb{F}_{q^2}$.
By (\ref{eq16032026H}), there are at most two possible values of $X^{q+1}$, so that at least four solutions share the same value of $X^{q+1}$. Hence, for some solution $\xi_1$, there exist at least three further solutions of the form $\xi=\kappa \xi_1$ with $\kappa^{q+1}=1$.
Therefore, 
$$\begin{cases}
b_2\tau^2 \xi_1^{2q+2}+b_0 \tau \xi_1^{q+1}-\xi_1^3=0;\\ 
b_2\tau^2 \xi^{2q+2}+b_0 \tau \xi^{q+1}-\xi^3=b_2\tau^2(\kappa\xi_1)^{2q+2}+b_0 \tau(\kappa\xi_1)^{q+1}-\kappa^3\xi_1^3=0.
\end{cases}
$$
Since $\kappa^{q+1}=1$, this yields $\kappa^3=1$, and we have at most six solutions.  Therefore,
$\cV\cap \cC_\tau$ restricted to $D$ has size at most six. For $6\le q+1$, the claim follows. The remaining cases $q=3,4$ do not occur, as $q\equiv -1 \pmod{3}$.
\end{proof}
Our proof also shows that the codewords of minimum weight are obtained by evaluating non-zero functions in $\mathcal{L}(\mathtt{G})$ with $\varepsilon=0$.  The following claim provides a characterization of those functions. 
\begin{proposition}
\label{pro19032026} Let $m=3$ and $q>5$ and let $\tau$ be as in Lemma 2. A codeword $ev_{\mathtt{D}}(f)$ of $C_\mathcal{L}(\mathtt{D},\mathtt{G})$ has minimum weight 
if and only if 
$$f(x,y)=\frac{y(b_0+b_2y)}{x^3}$$
where $b_2\in \mathbb{F}_{q^2}^*$, $b_0/(\tau b_2)\in \mathbb{F}_q^*$. % and $b_0b_2\ne 0$.
The number of codewords with minimum weight is equal to $(q^2-1)(q-1)$. 
\end{proposition}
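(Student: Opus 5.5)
The plan is to use the reduction already contained in the proof of Proposition \ref{pro13032026}: a minimum-weight codeword $ev_{\mathtt{D}}(f)$ is one for which $f$ vanishes at exactly $q+1$ points of $D$. I would first identify $D$ explicitly. Since $v=\tau u^{q+1}$ and $Q_i=(\omega^iu:\omega^{(q+1)i}v:1)$, we get $D=\{(\xi,\tau\xi^{q+1}) : \xi\in\mathbb{F}_{q^2}^*\}$. Hence, by Proposition \ref{pro10032026}, the zeros on $D$ of $f=(y(b_0+b_1x+b_2y)+\varepsilon x^3)/x^3$ are in bijection with the nonzero roots in $\mathbb{F}_{q^2}$ of $\tau X^{q+1}(b_0+b_1X+b_2\tau X^{q+1})+\varepsilon X^3$.

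\textbf{Step 1: $\varepsilon\neq 0$ is impossible.} With $\varepsilon\ne0$, normalized to $\varepsilon=1$, the proof of Proposition \ref{pro13032026} gives the following alternatives.
\begin{itemize}
\item Either every horizontal line $\ell_\eta$ meets $\cV\cap D$ in at most one point, and then there are at most $q-1$ zeros.
\item Or $q\equiv -1 \pmod 3$ and there are at most six zeros.
\item If $\cV$ has a linear component $Y-c$, one reduces to a conic-type equation with fewer zeros; I would check this briefly.
\end{itemize}
Since $q>5$ gives $6<q+1$, no function with $\varepsilon\ne 0$ has $q+1$ zeros on $D$. So $\varepsilon=0$, and the zeros on $D$ are the nonzero roots in $\mathbb{F}_{q^2}$ of
$$P(X)=\tau b_2X^{q+1}+b_1X+b_0.$$

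\textbf{Step 2: characterize when $P$ has $q+1$ such roots.} If $b_2=0$, then $P$ has at most one root, so $b_2\neq0$. Dividing, the condition becomes that $X^{q+1}+AX+B$, with $A=b_1/(\tau b_2)$ and $B=b_0/(\tau b_2)$, has $q+1$ distinct roots in $\mathbb{F}_{q^2}^*$.

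The key observation is as follows. For any root $\xi\in\mathbb{F}_{q^2}$ we have $\xi^{q+1}\in\mathbb{F}_q$, hence $A\xi+B\in\mathbb{F}_q$. If $A\neq0$, then $\xi$ lies in the set $A^{-1}(\mathbb{F}_q-B)$, which has only $q$ elements, a contradiction. Thus $b_1=0$, and $X^{q+1}=-B$ must have $q+1$ roots in $\mathbb{F}_{q^2}^*$. Since the norm map $\mathbb{F}_{q^2}^*\to\mathbb{F}_q^*$ is onto with fibres of size $q+1$, this happens exactly when $B\in\mathbb{F}_q^*$, and then all roots are automatically nonzero. This proves both directions of the characterization: $f=y(b_0+b_2y)/x^3$ with $b_2\neq0$ and $b_0/(\tau b_2)\in\mathbb{F}_q^*$.

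\textbf{Step 3: counting.} There are $q^2-1$ choices for $b_2$. For each, there are $q-1$ choices of $b_0=\tau b_2 t$ with $t\in\mathbb{F}_q^*$. Injectivity of $ev_{\mathtt{D}}$ (Theorem \ref{mainth}) turns these $(q^2-1)(q-1)$ distinct functions into distinct codewords.

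The main obstacle is Step 1: one must make sure the $\varepsilon\neq0$ analysis in Proposition \ref{pro13032026} really bounds the number of zeros strictly below $q+1$ in every subcase, including the discarded case where $\cV$ contains a line $Y-c$. I would re-verify that last subcase directly. There, the residual conic meets $\cC_\tau$ in at most $2$ points off $\ell_c$, while $\ell_c$ contributes at most $q+1$ points, so one needs to rule out simultaneous contributions. If necessary I would handle this by noting that $\varepsilon X^3$ then forces $c=0$, which is excluded since $D$ does not meet $X_2=0$.
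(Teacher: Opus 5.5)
This is essentially the paper's proof. You get the reduction to $\varepsilon=0$ from the proof of Proposition~\ref{pro13032026} exactly as the paper does, and your observation that every root satisfies $A\xi+B=-\xi^{q+1}\in\mathbb{F}_q$ (so for $A\neq0$ all roots lie in a $q$-element set) is a rephrasing of the paper's step that every root is a root of the degree-$q$ polynomial $k(X)=c_1^qX^q-c_1X+c_0^q-c_0$, after which the characterization and the count $(q^2-1)(q-1)$ are identical. On Step~1, the subcase you flagged is vacuous: for $\varepsilon\neq0$, setting $Y=c$ leaves the term $\varepsilon X^3$, so no line $Y-c$ is a component of $\cV$. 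The real weak point is elsewhere: the cited argument, like the paper's, asserts that no line $\ell_\eta$ carries exactly two zeros, which fails e.g.\ for $b_1XY+X^3=0$ with $q$ odd and $(\tau b_1)^{q+1}=1$, where the third point of $\ell_\eta\cap\cV$ has $X=0$. A clean repair is to note that each zero $\xi$ gives the point $(\xi,\xi^q)$ on both curves $Z^2+W(\beta_0+\beta_1Z+\beta_2ZW)=0$ and $W^2+Z(\beta_0^q+\beta_1^qW+\beta_2^qZW)=0$, with $(\beta_0,\beta_1,\beta_2)=(\tau b_0,\tau b_1,\tau^2b_2)$; B\'ezout then gives at most $\max\{6,q-1\}<q+1$ zeros, since a common component arises only when $\beta_0=\beta_2=0$, which gives at most $q-1$ zeros.
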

\begin{proof} From the proof of Proposition \ref{pro13032026}, if $q>5$, then a codeword $ev_{\mathtt{D}}(f)$ has minimum weight if and only if the associated polynomial $f(X)=\tau b_2X^{q+1}+b_1X+b_0$ with $b_0,b_1,b_2\in \mathbb{F}_{q^2}$ has $q+1$ (distinct) roots in $\mathbb{F}_{q^2}$. 

Lacunary polynomials of this kind appear in several applications of finite fields and they have been investigated in a series of papers, see for instance \cite{bl,kcm}. A main result in this direction is that any polynomial $u(X)=X^{q+1}+aX+b$, with coefficients in some finite field such that $q$ is a power of its characteristic, has either $0,1,2$, or $q+1$ roots over the same field. 

Now, assume that $f(X)$ has $q+1$ roots in $\mathbb{F}_{q^2}$. Then $\tau b_2\neq 0$. Replace $f(X)$ by $h(X)=X^{q+1}+c_1X+c_0$ where $h(X)$ arises from $f(X)$ dividing by $b_2\tau$. Then  $f(X)$ and $h(X)$ have the same roots. Take a root $\xi\in \mathbb{F}_{q^2}$ of $h(X)$. Then $\xi$ is also a root of the polynomial $g(X)=X^{q+1}+c_1^qX^q+c_0^q.$ So, 
$\xi$ is a root of the polynomial $k(X)=g(X)-h(X)=c_1^q X^q-c_1X+c_0^q-c_0$. If $f(X)$ and hence $k(X)$ has $q+1$ roots then $c_1=0$ and $c_0\in \mathbb{F}_q$. Therefore, $g(X)$ has $q+1$ distinct roots if and only if $c_0\in \mathbb{F}_q$ and $c_0\ne 0$. In terms of $f(X)$, we have $f(X)=\tau b_2 X^{q+1}+b_0$ where $b_0/( \tau b_2)\in \mathbb{F}_{q}$, $b_2\in \mathbb{F}_{q^2}$ and $b_0b_2\ne 0$. 
In particular, there exist as many as  $(q^2-1)(q-1)$ functions $f\in \mathcal{L}(\mathtt{D},\mathtt{G})$ with minimum weight.
\begin{remark} \emph{Proposition \ref{pro19032026} does not hold for $q=5$. In fact, a MAGMA aided computation shows for $q=5$ that the number of codewords of minimum distance of $C_\mathcal{L}(\mathtt{D},\mathtt{G})$ is equal to $672>96$. }
\end{remark}
\end{proof}\subsection{Roots of the polynomial $X^{q+1}(b_0+b_1X+b_2 X^{q+1})+X^3$}
%As in Lemma \ref{lem09032026B},
For $b_0,b_1,b_2,\tau \in \mathbb{F}_{q^2}$, $\tau\neq 0, b_2\neq 0$, we determine the non-zero roots of the polynomial 
$$g(X)=\tau X^{q+1}(b_0+b_1X+b_2 \tau X^{q+1})+X^3$$
using the field reduction technique. Replacing $\tau b_0,\tau b_1,\tau^2 b_2$ with $b_0,b_1,b_2$, respectively,
shows that $g(X)=X^{q+1}(b_0+b_1X+b_2 X^{q+1})+X^3$ may be assumed. 

We begin with the odd characteristic case. 
\subsection{Case $q\ge 5$ odd}
Fix a non-square element $s$ in $\mathbb{F}_q$. Look at $\mathbb{F}_{q^2}$ as the quadratic extension of $\mathbb{F}_q$ by the root $i$ of the irreducible equation $T^2-s=0$. Then $i^2=s$ and $i^q=-i$. Write $$b_0=\alpha_0+i \alpha_1,\, b_1=\beta_0+i\beta_1,\, b_2=\gamma_0+i\gamma_1.$$ Also, for a root $x\in \mathbb{F}_{q^2}$ of $g(X)$, write $x=x_1+i x_2.$ Then  $g(x)=A(x_1,x_2)+i B(x_1,x_2)=0$ where $A(x_1,x_2)=B(x_1,x_2)=0$ and 
$$
\begin{array}{llll}
A(x_1,x_2)=\big(\alpha_0+\beta_0 x_1+s\beta_1 x_2+\gamma_0(x_1^2-sx_2^2)\big)(x_1^2-sx_2^2)+x_1(x_1^2+3sx_2^2), \\
B(x_1,x_2)=\big(\alpha_1+\beta_1x_1+\beta_0x_2+\gamma_1(x_1^2-sx_2^2)\big)(x_1^2-sx_2^2)+x_2(3x_1^2+sx_2^2).
\end{array}
$$
In the projective plane $PG(2,q)$ over $\mathbb{F}_q$, let $\cF$ be the plane curve of affine equation $A(X_1,X_2)=0$ where 
$$ A(X_1,X_2)=\big(\alpha_0+\beta_0 X_1+s\beta_1 X_2+\gamma_0(X_1^2-sX_2^2)\big)(X_1^2-sX_2^2)+X_1(X_1^2+3sX_2^2).$$ 
The following results collect some useful properties of $\cF$. Let $R^+=(i:1:0)$ and $R^-=(-i:1:0)$. Also,
let $\ell^+$ and $\ell^-$ be the lines of equation $X_1=iX_2$, and $X_1=-iX_2$, respectively.  
\begin{lemma}
\label{lem27032026} Assume that $\alpha_0\ne 0$ and $\gamma_0\ne 0$.
\begin{itemize}
\item[(i)] $\deg(\cF)=4$.
\item[(ii)] The points at infinity of $\cF$ are $R^+$ and $R^-$.
\item[(iii)] The origin $O=(0:0:1)$ is a node with tangent lines $\ell^+$ and $\ell^-$.  %where $\ell^+: X_1=iX_2$ and %$\ell^-: X_1=-iX_2$. 
\item[(iv)] Both $R^+$ and $R^-$ are non-singular points. 
\item[(v)] If $\cF$ is reducible over an algebraic closure $\mathbb{K}$ of $\mathbb{F}_{q}$, then $\cF$ splits into two irreducible conics both defined over $\mathbb{F}_{q^2}$ but not over $\mathbb{F}_{q}$.
\end{itemize}
\end{lemma}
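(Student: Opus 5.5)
The plan is to analyze the quartic directly through its homogeneous components. With $N(X_1,X_2)=X_1^2-sX_2^2$, I write
$$A=\gamma_0N^2+(\beta_0X_1+s\beta_1X_2)N+X_1(X_1^2+3sX_2^2)+\alpha_0N.$$
So $A$ splits into pieces of degree $4,3,2$ (no linear or constant term).

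For (i), the degree-4 part is $\gamma_0N^2$, which is nonzero since $\gamma_0\neq0$. For (ii), the points at infinity are the zeros of $N^2$, i.e. of $X_1^2-sX_2^2=(X_1-iX_2)(X_1+iX_2)$; these are exactly $R^\pm$. For (iii), the lowest-degree part of $A$ at $O$ is $\alpha_0N$, because $A$ has no constant or linear term. Since $\alpha_0\ne0$, $O$ is a double point, and its tangents are the two distinct lines $X_1=\pm iX_2$, so $O$ is a node.

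For (iv), I homogenize with $X_3$ and work at $R^+=(i:1:0)$. Here $F=\gamma_0N^2+(\beta_0X_1+s\beta_1X_2)NX_3+X_1(X_1^2+3sX_2^2)X_3+\alpha_0NX_3^2$. Since $N(R^+)=0$, the partials $\partial_{X_1}F$ and $\partial_{X_2}F$ vanish at $R^+$: every term either contains $X_3$ or contains $N^2$. The remaining partial is $\partial_{X_3}F(R^+)=X_1(X_1^2+3sX_2^2)|_{R^+}=i(s+3s)=4si$. This is nonzero because $q$ is odd and $s\ne0$; for $p=3$ it is still nonzero, as $4=1$. Hence $R^+$ is non-singular, with tangent line $X_3=0$. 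The case of $R^-$ follows by the conjugation $i\mapsto -i$, which preserves $\cF$ because $\cF$ is defined over $\mathbb{F}_q$.

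For (v), which I expect to be the main step, I first rule out a linear component. A line component would give $\cF$ a line meeting the other component, a cubic, in $3$ points, each of them singular. I aim to show that $\cF$ has no singular points other than $O$. That requires an explicit check, so instead I argue through the points at infinity. Any line component meets the line at infinity in $R^+$ or $R^-$, because these are the only points of $\cF$ there. It also meets the residual cubic in 3 points (Bézout), and these are singular points of $\cF$; the affine ones among them must be $O$, if $O$ is the only affine singular point. The tangent at $R^\pm$ is $X_3=0$, which is not a component, so this needs a separate argument.

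A cleaner route is to use intersection with the line at infinity. We have $I(R^+,\cF\cap\{X_3=0\})=2$, since the restriction is $\gamma_0N^2$, so each $R^\pm$ is a simple point where $\cF$ touches $X_3=0$. If $\cF=L\cup K$ with $L$ a line and $K$ a cubic, then $L$ passes through, say, $R^+$. Because $R^+$ is non-singular, $K$ misses $R^+$, so $K\cap\{X_3=0\}$ consists of $R^-$ counted with multiplicity 2. But $L$ through $R^+$ contributes multiplicity 1 at $R^+$, while the total multiplicity at $R^+$ must be 2. So the point of $L$ at $R^+$ must be tangency, forcing $L$ to be $X_3=0$, which is not a component. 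This is a contradiction, so there is no line component.

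The remaining possibility is a splitting into two conics $\cC_1\cup\cC_2$. Each $\cC_j$ meets infinity only in $R^\pm$, and the same multiplicity count shows that each conic passes through exactly one of $R^+$, $R^-$ and is tangent to $X_3=0$ there. The Frobenius $\Phi_q$ preserves $\cF$ and swaps $R^+$ and $R^-$, so it swaps $\cC_1$ and $\cC_2$. Hence neither conic is defined over $\mathbb{F}_q$, while $\Phi_q^2$ fixes each, so both are defined over $\mathbb{F}_{q^2}$. Irreducibility of each conic follows from the absence of line components.
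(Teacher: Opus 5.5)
Your proof is correct. For (i)--(iii) and (v) it follows the paper's route.

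\begin{itemize}
\item For (i)--(iii), the paper also reads the claims off the homogeneous components.
\item For (v), the paper also rules out a line component: such a line must pass through $R^+$ or $R^-$, must then be the tangent there because $R^\pm$ is simple, and $I(R^+,\cF\cap\ell_\infty)=2$ forces that tangent to be $\ell_\infty$. The conic case is then settled by the Frobenius swap of $R^+$ and $R^-$. Your (v) is in fact slightly more complete than the paper's, since you say why each conic is irreducible and why it is defined over $\mathbb{F}_{q^2}$ (via $\Phi_q^2$).
\end{itemize}

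The genuine difference is in (iv). The paper argues by contradiction. If $R^+$ were singular, the line $\ell^+=OR^+$ would meet $\cF$ with multiplicity at least $3$ at $O$ (as a nodal tangent) and at least $2$ at $R^+$. By B\'ezout, $\ell^+$ would then be a component, and by Frobenius so would $\ell^-$. Hence $X_1^2-sX_2^2$ would divide $X_1(X_1^2+3sX_2^2)$, which fails in odd characteristic.

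You instead apply the Jacobian criterion and compute $\partial F/\partial X_3(R^+)=4si\neq 0$. Both arguments rest on the same arithmetic fact, namely that the cubic part does not vanish at $(\pm i,1)$ because $4s\neq 0$. Your version has three advantages:
\begin{itemize}
\item It is shorter.
\item It uses nothing about $O$, so it applies verbatim when $\alpha_0=0$ (the next lemma) and to $\cG$, where the corresponding partial at $R^+$ equals $4s$.
\item It identifies the tangent at $R^\pm$ as $\ell_\infty$ at the same time, which is exactly what your (v) uses.
\end{itemize}
The paper's argument is more synthetic but depends on first knowing the tangent structure at $O$.

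Two cosmetic fixes. First, delete the abandoned first paragraph of (v); it rests on the unproved claim that $O$ is the only affine singular point. Second, the phrase ``$K\cap\{X_3=0\}$ consists of $R^-$ counted with multiplicity $2$'' is off, since a cubic meets $\ell_\infty$ in degree $3$. It is cleaner to write $\cF\cdot\ell_\infty=L\cdot\ell_\infty+K\cdot\ell_\infty$: its coefficient at $R^+$ would be $1+0$, whereas it must be $2$.
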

\begin{proof} The first three claims follow directly  from the equation of $\cF.$ 
To show (iv) assume on the contrary that $R^+$ is a singular point.  Then $I(R^+,\cF\cap\ell^+)\ge 2$. Also, $I(O,\cF\cap \ell^+)\ge 3$ as $\ell^+$ is a tangent line to $\cF$ at $O$. From the B\'ezout theorem, $\ell^+$ is a component of $\cF$. Since the Frobenius collineation preserves $\cF$, $R^-$ is also a singular point, and the previous argument shows that $\ell^-$ is also a component of $\cF$. Therefore, both $X_1-iX_2$ and $X_1+iX_2$, and hence their product $X_1^2-sX_2^2$ are factors of $A(X_1,X_2)$. This yields 
that $X_1^2-sX_2^2$ divides $X_1(X_1^2+3s X_2^2)$. But this is impossible $\mathbb{F}_q$ being supposed to have odd characteristic.  

To show (v) assume on the contrary that $\cF$ has a linear component $\ell$.  From (ii), $\ell$ is not the line at infinity $\ell_\infty$, and hence either $R^+\in\ell$, or $R^-\in \ell$. 
If $R^+\in\ell$ then (iv) yields that $\ell$ is the unique component through $R^+$. Therefore, $\ell$ is the tangent to $\cF$ at $R^+$. On the other hand, (ii) also shows that  $I(R^+,\cF\cap \ell_\infty)=2$, and hence $\ell_\infty$ is the tangent line to $\cF$ at $R^+$. But then $\ell=\ell_\infty$, a contradiction. A similar argument can be used when $R^-\in \ell$. Therefore, $\cF$ splits into two irreducible conics, say $C_1$ and $C_2$. From (ii), 
 $R^+\in \mathcal{C}_1$ or $R^-\in \mathcal{C}_1$. If $R^+\in \mathcal{C}_1$ then $R^+\notin \mathcal{C}_2$ by (i), and hence $I(R^+,\cF\cap \ell_\infty)=2$ implies that $\ell_\infty$ is the tangent line to $\mathcal{C}_1$ at $R^+$. Then $R^-\not\in \mathcal{C}_1$. Since the Frobenius collineation swaps $R^+$ and $R^-$, this implies that $\mathcal{C}_1$ is not defined over $\mathbb{F}_q$. 
 \end{proof}
 \begin{lemma}
\label{lem27032026D} Assume that $\alpha_0=0$ and $\gamma_0\ne 0$. Then (i),(ii), (iv) and  (v)
of Lemma \ref{lem27032026} hold. Moreover, 
\begin{itemize}
\item[(iii*)] The origin $O=(0:0:1)$ is a triple point.  
\end{itemize}
\end{lemma}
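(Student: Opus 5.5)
With $\alpha_0=0$, the affine equation of $\cF$ becomes
$$A(X_1,X_2)=\big(\beta_0X_1+s\beta_1X_2+\gamma_0(X_1^2-sX_2^2)\big)(X_1^2-sX_2^2)+X_1(X_1^2+3sX_2^2).$$
Its lowest-degree part is now the cubic form $(\beta_0X_1+s\beta_1X_2)(X_1^2-sX_2^2)+X_1(X_1^2+3sX_2^2)$. The plan is to read off (i), (ii) and (iii*) directly from this decomposition into homogeneous parts, and then to re-run the arguments of Lemma \ref{lem27032026} for (iv) and (v), checking at each step where the hypothesis $\alpha_0\ne 0$ (that is, $O$ being a node) was actually used and supplying a replacement for the triple point.

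For (i): the top-degree part is $\gamma_0(X_1^2-sX_2^2)^2$, which is nonzero since $\gamma_0\neq 0$, so $\deg(\cF)=4$. For (ii): the points at infinity are the zeros of this quartic form. Since $s$ is a non-square, these are exactly $R^\pm$, each of multiplicity two. For (iii*): the constant, linear and quadratic parts of $A$ vanish. The cubic part is not identically zero, because its coefficient of $X_1^3$ is $\beta_0+1$ and its coefficient of $X_1X_2^2$ is $3s-s\beta_0$. These cannot both vanish, since that would force $\beta_0=-1$ and then $4s=0$, impossible in odd characteristic. Hence $O$ is a triple point.

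For (iv): suppose $R^+$ is singular. Then $I(R^+,\cF\cap\ell^+)\ge 2$. Since $O$ is a triple point, $I(O,\cF\cap\ell^+)\ge 3$ holds automatically, so the total is at least $5>4$ and B\'ezout gives $\ell^+\subset\cF$. Applying Frobenius gives $\ell^-\subset\cF$ as well. Then $X_1^2-sX_2^2$ divides $A$, hence divides $X_1(X_1^2+3sX_2^2)$, which is impossible in odd characteristic exactly as before. So the proof of Lemma \ref{lem27032026}(iv) goes through essentially verbatim.

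For (v): the argument excluding a linear component used only (ii), (iv) and $I(R^\pm,\cF\cap\ell_\infty)=2$, so it carries over unchanged. The same holds for the argument that the two conic components are swapped by Frobenius and so are not defined over $\mathbb{F}_q$: it only uses the behaviour at $R^\pm$. The one place I expect an obstacle is whether a linear component through $O$ could now be tangent in a way not covered. However, any line component must pass through $R^+$ or $R^-$ (its point at infinity lies on $\cF$), so the tangent argument at $R^\pm$ still excludes it. One must also check that the two conics are irreducible, which holds because there are no line components. Finally, if the lowest cubic form happened to factor through $X_1^2-sX_2^2$, the triple point might be degenerate; since this does not affect (iv) or (v), I would only remark on it.
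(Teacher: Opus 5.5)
Your proof is correct. For (i), (ii), (iii*) and (iv) it does what the paper intends; the paper leaves these implicit, and you fill them in.
\begin{itemize}
\item Your check that the cubic part is non-zero, via the coefficients $\beta_0+1$ of $X_1^3$ and $s(3-\beta_0)$ of $X_1X_2^2$, settles (iii*).
\item Your remark that $I(O,\cF\cap\ell^+)\ge 3$ now holds for every line through $O$ is exactly what makes the old proof of (iv) carry over.
\end{itemize}

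The genuine difference is in (v). You and the paper both first exclude linear components by the tangent argument at $R^\pm$. After that the routes diverge.
\begin{itemize}
\item You re-run the Frobenius argument to show that the two conic components are swapped, hence defined over $\mathbb{F}_{q^2}$ but not over $\mathbb{F}_q$.
\item The paper uses (iii*) instead. Irreducible conics are non-singular, so if $\cF=\mathcal{C}_1\cup\mathcal{C}_2$, then $O$ would have multiplicity at most $1+1=2$ on $\cF$. This contradicts $O$ being a triple point.
\end{itemize}
So the paper in fact proves that $\cF$ is absolutely irreducible when $\alpha_0=0$ and $\gamma_0\neq 0$, and (v) then holds vacuously.

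Your route proves the conditional statement exactly as written. It never uses $O$, so it stays uniform with Lemma \ref{lem27032026}. The paper's route is shorter and gives the stronger conclusion. That conclusion is the natural tool for the later B\'ezout counts, e.g.\ in Proposition \ref{pro28032026} with $\alpha_0=\alpha_1=0$: there irreducibility of both $\cF$ and $\cG$, together with $\cF\neq\cG$, rules out a common component outright. You had the triple point in hand but did not notice that it kills the conic case.

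Your closing worry, that the cubic form might be divisible by $X_1^2-sX_2^2$, is moot. Such divisibility would force $X_1^2-sX_2^2$ to divide $X_1(X_1^2+3sX_2^2)$, which you already excluded in (iv). In any case the shape of the tangent cone does not affect $O$ being a triple point.
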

\begin{proof} 
%As before, let $\ell^+: X_1=iX_2$ and $\ell^-: X_1=-iX_2$ denote the lines joining $O$ with $R^+$, and $R^-$, respectively. 
%We show that no line through $O$ is a component. Let $\ell$ be a linear component through $O$ of equation $X_1=mX_2$ with $m\in \mathbb{F}$. Substituting $X_1$ by $m X_2$ in $A(X_1,X_2)$ gives $X_2^3\big((m\beta_0+s\beta_1)+m(m^2+s)+\gamma_0(m^2-s)X_2\big)$. If $\ell$ is a component, then both $m^2-s=0$ and $m\beta_0+s\beta_1+m(m^2+3s)=0$ hold whence $m=\pm i$, that is, 
%and $m(4s+\beta_0)+s\beta_1=0$. Since $\beta_0,\beta_1,s\in \mathbb{F}_q$, $s\ne 0$, this yields $4s+\beta_0=0$ and $\beta_1=0$.    
%that is $\ell$ is either $\ell^+$ or $\ell^-$, as defined in the proof of Lemma \ref{lem27032026}. Since the Frobenius collineation preserves $\cF$ and swaps $\ell^+$ and $\ell^-$, both of them are components of $\cF$. However, as in the proof of Lemma \ref{lem27032026}, this leads to a contradiction.  If $\ell$ has equation $X_2=mX_1$ with $m\in\mathbb{F}$, the same argument may be used. 
Using the same argument as in the proof of (v) of Lemma \ref{lem27032026}, it follows that  no line through $O$ is a component of $\cF$. Therefore, 
 if $\cF$ is reducible over $\mathbb{K}$, then its non-linear components through $O$ are irreducible conics over $\mathbb{K}$. Since $\deg(\cF)=4$, there are at most two such conics, but then $O$ is at most a double point, a contradiction with (iii*). This proves (v).     
\end{proof}
\begin{lemma}
\label{lem28032026B} Assume that $\alpha_0\neq 0$ and $\gamma_0=0$. Then (iii) of Lemma \ref{lem27032026} holds. Moreover, 
\begin{itemize}
\item[(i*)] $\deg(\cF)=3$.
\end{itemize}
\end{lemma}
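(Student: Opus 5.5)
With $\gamma_0=0$ the polynomial is
$$A(X_1,X_2)=\big(\alpha_0+\beta_0X_1+s\beta_1X_2\big)(X_1^2-sX_2^2)+X_1(X_1^2+3sX_2^2).$$
The plan is to read both claims directly off this expression, sorted by homogeneous components.

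For (i*), the top-degree part is the cubic
$$(\beta_0X_1+s\beta_1X_2)(X_1^2-sX_2^2)+X_1^3+3sX_1X_2^2.$$
I will check that this cubic cannot vanish identically. Its $X_1^3$ coefficient is $\beta_0+1$ and its $X_2^3$ coefficient is $-s^2\beta_1$. If both were zero, then $\beta_0=-1$ and $\beta_1=0$, and the cubic would reduce to
$$-X_1(X_1^2-sX_2^2)+X_1^3+3sX_1X_2^2=4sX_1X_2^2.$$
This is nonzero because $q$ is odd and $s\neq0$. Hence $\deg(\cF)=3$. Since $A$ is visibly nonzero, this also confirms that $\cF$ is a genuine curve.

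For (iii), the constant term of $A$ vanishes and so does its linear part, since every summand has degree at least $2$. The quadratic part is $\alpha_0(X_1^2-sX_2^2)$, which is nonzero because $\alpha_0\neq0$. So $O$ is a double point of $\cF$, and its tangent cone is $X_1^2-sX_2^2=(X_1-iX_2)(X_1+iX_2)$. The two factors are distinct because $i\neq -i$ in odd characteristic. Therefore the tangent lines at $O$ are the two distinct lines $\ell^+$ and $\ell^-$, and $O$ is a node, exactly as in Lemma~\ref{lem27032026}(iii).

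The only point needing any care is the degenerate case $\beta_0=-1$, $\beta_1=0$ in (i*), where the leading coefficients in $X_1$ and $X_2$ both vanish. There we must use odd characteristic to see that the remaining term $4sX_1X_2^2$ survives. Everything else is routine inspection of the homogeneous components.
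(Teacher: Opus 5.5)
Your proof is correct and follows the same route as the paper, which in the odd case simply reads both claims off the equation of $\cF$, just as it does explicitly in the even-characteristic counterpart Lemma~\ref{lemmaqev2}: the vanishing quartic part gives $\deg(\cF)=3$, and the quadratic part $\alpha_0(X_1^2-sX_2^2)$ makes $O$ a node with tangents $\ell^+,\ell^-$. Your extra check that the cubic part cannot vanish identically (the term $4sX_1X_2^2$ survives when $\beta_0=-1$, $\beta_1=0$) is a detail the paper leaves implicit, and you handle it correctly.
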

\begin{lemma}
\label{lem29032026} Assume that $\alpha_0=\gamma_0=0$. Then (i*) and (iii*) in Lemma \ref{lem28032026B} hold. Moreover,
\begin{itemize}
\item[(vi)]  $\cF$ splits into three (not necessarily different) lines through $O$. 
\end{itemize}
\end{lemma}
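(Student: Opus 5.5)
With $\alpha_0=\gamma_0=0$ the polynomial $A$ loses its constant term and its quartic part, so it becomes homogeneous of degree $3$. The plan is to make this explicit and then apply the standard factorization of binary forms.

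Substituting $\alpha_0=0$ and $\gamma_0=0$ into the defining expression of $A(X_1,X_2)$ gives
$$A(X_1,X_2)=(\beta_0X_1+s\beta_1X_2)(X_1^2-sX_2^2)+X_1(X_1^2+3sX_2^2).$$
Each summand is homogeneous of degree $3$ in $X_1,X_2$. The coefficient of $X_1^3$ is $\beta_0+1$ and that of $X_2^3$ is $-s^2\beta_1$, and these cannot both vanish identically unless $\beta_0=-1$ and $\beta_1=0$. I will check that even in that case $A$ is not the zero polynomial: the coefficient of $X_1X_2^2$ is then $-s\beta_0+3s=4s\ne0$, since $q$ is odd. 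So $A$ is a nonzero binary cubic form, $\deg(\cF)=3$, and (i*) holds.

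Next, the homogeneous equation of $\cF$ in $PG(2,\mathbb{K})$ is simply $A(X_1,X_2)=0$ with no $X_3$ appearing. A nonzero binary form of degree $3$ over the algebraically closed field $\mathbb{K}$ factors into three linear forms $\lambda_jX_1+\mu_jX_2$ for $j=1,2,3$, possibly with repetitions. Hence $\cF$ splits into three lines, each passing through $O=(0:0:1)$, which proves (vi).

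Finally, because every component line passes through $O$, every line through $O$ meets $\cF$ there with multiplicity at least $3$, and a line through $O$ that is not a component meets it with multiplicity exactly $3$. Thus $O$ is a triple point, which is (iii*). The only step needing care is the verification that $A\not\equiv 0$ in the degenerate subcase, which I have handled with the $4s\neq0$ check above.
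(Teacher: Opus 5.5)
Your proof is correct and follows the same route as the paper. With $\alpha_0=\gamma_0=0$, the polynomial $A$ becomes the binary cubic form $(\beta_0+1)X_1^3+s\beta_1X_1^2X_2+s(3-\beta_0)X_1X_2^2-s^2\beta_1X_2^3$, which splits over $\mathbb{K}$ into three linear factors through $O$. You also check that this form cannot vanish identically (the coefficient $4s\neq 0$ when $\beta_0=-1$, $\beta_1=0$) and derive the triple point explicitly from the factorization, which fills in details the paper leaves implicit.
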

\begin{proof} This time, $A(X_1,X_2)$ is a homogeneous polynomial of degree $3$, namely
$(\beta_0+1)X_1^3+s\beta_1X_1^2X_2+s(3-\beta_0)X_1X_2^2-s^2\beta_1X_2^3.$ 
Its factors are linear and define the three  lines in (vi).   
\end{proof}
 In $PG(2,q)$, let $\cG$ be the plane curve of affine equation $B(X_1,X_2)=0$ where
$$B(X_1,X_2)=\big(\alpha_1+\beta_1X_1+\beta_0X_2+\gamma_1(X_1^2-sX_2^2\big)(X_1^2-sX_2^2)+X_2(3X_1^2+sX_2^2).$$
The arguments used to prove Lemma \ref{lem27032026} also provide a proof for the following result. 
\begin{lemma}
\label{lem28032026A} The claims in Lemmas \ref{lem27032026}, \ref{lem28032026B}, \ref{lem27032026D} and \ref{lem29032026} hold true for $\cG$. 
\end{lemma}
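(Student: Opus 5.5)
The plan is to transport each step of the proofs of Lemmas \ref{lem27032026}, \ref{lem27032026D}, \ref{lem28032026B} and \ref{lem29032026} from $\cF$ to $\cG$. The role of $\alpha_0,\gamma_0$ is now played by $\alpha_1,\gamma_1$, and the cubic term $X_1(X_1^2+3sX_2^2)$ is replaced by $X_2(3X_1^2+sX_2^2)$. The key structural fact is that $B$ has the same shape as $A$: a quadratic (or linear) factor times $N(X_1,X_2)=X_1^2-sX_2^2$, plus a homogeneous cubic. Moreover $N$ factors over $\mathbb{F}_{q^2}$ as $(X_1-iX_2)(X_1+iX_2)$. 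The quartic part of $B$ is $\gamma_1N^2$, so when $\gamma_1\neq0$ the curve $\cG$ has degree $4$. Its points at infinity are then exactly $R^\pm$, each with $I(R^\pm,\cG\cap\ell_\infty)=2$, which gives (i) and (ii). When $\gamma_1=0$ the top part is the cubic $(\beta_1X_1+\beta_0X_2)N+X_2(3X_1^2+sX_2^2)$, which gives (i*). The lowest-degree part at $O$ is $\alpha_1N$ when $\alpha_1\ne0$, so $O$ is a node with tangents $\ell^\pm$ and (iii) holds. When $\alpha_1=0$ the lowest part is a cubic, so $O$ is a triple point and (iii*) holds.

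For (iv), I would argue as before. If $R^+$ were singular, then $\ell^+$ would meet $\cG$ with multiplicity at least $2$ at $R^+$ and at least $3$ at $O$. Bézout then forces $\ell^+\subset\cG$, and since the Frobenius collineation fixes $\cG$ (it has coefficients in $\mathbb{F}_q$), also $\ell^-\subset\cG$. Hence $N$ divides $B$, so $N$ divides $X_2(3X_1^2+sX_2^2)$. This cubic is coprime to $N$ in odd characteristic: at $X_1=\pm iX_2$ it evaluates to $4sX_2^3\neq0$, using $3s+s=4s\neq0$. This step also covers the case $\alpha_1=0$, since then $O$ is triple and the count is even easier.

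For (v), and for the reducibility part of Lemma \ref{lem27032026D}, I would repeat the argument word for word. A linear component cannot be $\ell_\infty$, so it passes through $R^+$ or $R^-$; by (iv) it must then be the tangent there, which is $\ell_\infty$, a contradiction. So $\cG$ splits into two conics, and the tangency of $\ell_\infty$ at $R^+$ shows each conic contains only one of $R^\pm$. Since Frobenius swaps $R^\pm$, neither conic is defined over $\mathbb{F}_q$. In the triple-point case two conics give at most a double point, which is a contradiction. For the analogue of Lemma \ref{lem29032026}, with $\alpha_1=\gamma_1=0$, $B$ is a homogeneous cubic $\beta_1X_1^3+(\beta_0+3)X_1^2X_2-s\beta_1X_1X_2^2+s(1-\beta_0)X_2^3$, which splits over $\mathbb{K}$ into three lines through $O$.

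The only genuine check is (iv), namely the non-divisibility of $X_2(3X_1^2+sX_2^2)$ by $N$. I would verify that by the evaluation above; the other claims are mechanical. I would also state hypotheses such as ``$\gamma_1\ne0$'' explicitly in place of ``$\gamma_0\ne0$''.
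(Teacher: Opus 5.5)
Your proposal is correct and is essentially the paper's proof. The paper only remarks that the arguments for Lemma \ref{lem27032026} carry over to $\cG$, with $\alpha_1,\gamma_1$ and $X_2(3X_1^2+sX_2^2)$ replacing $\alpha_0,\gamma_0$ and $X_1(X_1^2+3sX_2^2)$; your evaluation of this cubic as $4sX_2^3\neq 0$ on $\ell^{\pm}$ is the check that justifies the transfer, both for (iv) and for the nonvanishing of the cubic forms in (i*) and (iii*). The only item you leave implicit is the ``defined over $\mathbb{F}_{q^2}$'' part of (v): it follows at once because Frobenius swaps the two conics, so its square fixes each of them.
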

\begin{lemma}
\label{lem28032026C} $\cF\ne \cG$. 
\end{lemma}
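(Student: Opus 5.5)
We have to show that the quartics $A(X_1,X_2)=0$ and $B(X_1,X_2)=0$ are different curves, that is, $A$ and $B$ are not proportional. Since both are defined over $\mathbb{F}_q$ and a curve determines its polynomial up to a non-zero scalar, the claim is equivalent to: there is no $\lambda\in\mathbb{K}^*$ with $A=\lambda B$. I will compare coefficients in $A$ and $B$, choosing coefficients that involve no parameters or only a few of them.

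The terms of degree three are the easiest to control. Expanding, the cubic part of $A$ is
$$(\beta_0X_1+s\beta_1X_2)(X_1^2-sX_2^2)+X_1^3+3sX_1X_2^2 .$$
The cubic part of $B$ is
$$(\beta_1X_1+\beta_0X_2)(X_1^2-sX_2^2)+3X_1^2X_2+sX_2^3 .$$
The relevant coefficients are as follows:
\begin{itemize}
\item In $A$: the coefficient of $X_1^3$ is $\beta_0+1$, and that of $X_1X_2^2$ is $s(3-\beta_0)$.
\item In $B$: the coefficient of $X_1^3$ is $\beta_1$, and that of $X_1X_2^2$ is $-s\beta_1$.
\item In $A$: the coefficient of $X_1^2X_2$ is $s\beta_1$, and that of $X_2^3$ is $-s^2\beta_1$.
\item In $B$: the coefficient of $X_1^2X_2$ is $\beta_0+3$, and that of $X_2^3$ is $s(1-\beta_0)$.
\end{itemize}
If $A=\lambda B$, then the $X_1^3$ and $X_1X_2^2$ coefficients give
$$\beta_0+1=\lambda\beta_1,\qquad s(3-\beta_0)=-\lambda s\beta_1 .$$
Adding them (after dividing the second by $s\neq0$) gives $4=0$, which is impossible in odd characteristic. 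This is the whole argument when $\lambda$ is an arbitrary scalar. The conclusion does not depend on whether $\alpha_0,\gamma_0$ vanish, so it covers all four cases of Lemmas \ref{lem27032026}--\ref{lem29032026} and their analogues for $\cG$ in Lemma \ref{lem28032026A}.

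The only remaining point is to check that $A$ and $B$ are non-zero polynomials, so that both curves are defined. This holds because each contains a degree-three part that can never vanish identically. For $A$, the $X_1^3$ and $X_1X_2^2$ coefficients cannot both be zero, since that would force $\beta_0=-1$ and $\beta_0=3$, again $4=0$. The same holds for $B$ via its $X_1^2X_2$ and $X_2^3$ coefficients, which vanish together only if $\beta_0=-3$ and $\beta_0=1$.

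The main obstacle I expect is bookkeeping in the expansion: one must make sure that the quartic terms $\gamma_0(X_1^2-sX_2^2)^2$ and $\gamma_1(X_1^2-sX_2^2)^2$, and the constant-times-quadratic terms, do not contribute to the chosen cubic monomials. They do not, because of their homogeneous degrees, so the comparison of the cubic homogeneous parts is legitimate.
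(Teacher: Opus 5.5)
Your proposal is correct and is essentially the paper's argument: both compare two cubic coefficients of $A$ and $B$ under an assumed proportionality and reach $4=0$, which is impossible in odd characteristic. The only difference is that the paper uses the coefficients of $X_1^2X_2$ and $X_2^3$, whereas you use those of $X_1^3$ and $X_1X_2^2$.
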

\begin{proof} The coefficients of $X_1^2X_2$ and $X_2^3$ in $A(X_1,X_2)$ are equal to  $s\beta_1$ and $-s^2\beta_1$, respectively. In $B(X_1,X_2)$, the homologous  coefficients are equal to $\beta_0+3$ and $-s(\beta_0-1)$, respectively. If $\cF=\cG$, then there exists a non-zero constant $c\in \mathbb{F}$ such that $c s\beta_1=\beta_0+3$ and $c s^2\beta_1=s(\beta_0-1)$. But this yields $\beta_0+3=\beta_0–1$ which may only happen when $\mathbb{F}_{q^2}$ has even characteristic.     \end{proof}
\begin{proposition}
    \label{pro28032026} If  $\gamma_0 \ne 0$ and $\gamma_1\ne 0$, then $\cF\cap \cG$ 
contains at most six affine points in $PG(2,q)$ other than $O$. 
\end{proposition}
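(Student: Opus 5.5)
Both $\cF$ and $\cG$ are quartics when $\gamma_0\ne0$ and $\gamma_1\ne0$, by Lemma \ref{lem27032026}(i) and Lemma \ref{lem28032026A}. By Lemma \ref{lem28032026C} they are distinct curves. The plan is to bound the affine part of $\cF\cap\cG$ with B\'ezout's theorem, after showing that they share no component. The total intersection number is then $16$. From this we subtract the contributions at $O$ and at the two points at infinity $R^\pm$, which are common to both curves by Lemma \ref{lem27032026}(ii) applied to each.

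\textbf{Step 1: no common component.} If $\cF$ is irreducible, then $\cF\ne\cG$ and $\deg\cF=\deg\cG$ together imply that there is no common component; the same holds if $\cG$ is irreducible. Otherwise, by (v), each curve splits into two irreducible conics defined over $\mathbb{F}_{q^2}$ but not over $\mathbb{F}_q$, and these are conjugate under the Frobenius collineation. A shared conic would bring its conjugate along with it, which would force $\cF=\cG$, a contradiction. So B\'ezout gives $\sum_P I(P,\cF\cap\cG)=16$.

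\textbf{Step 2: local contributions at the special points.} At $O$, both curves have a node with the same tangents $\ell^+$ and $\ell^-$ when $\alpha_0\alpha_1\ne0$, or a triple point when $\alpha_0$ or $\alpha_1$ vanishes. If both have a node, the shared tangents give $I(O,\cF\cap\cG)\ge 2\cdot2+2=6$. If one has a triple point, then $I(O)\ge 3\cdot2=6$ and the bound only improves. At $R^+$, both curves are non-singular by (iv) and have $\ell_\infty$ as common tangent, since $I(R^+,\cF\cap\ell_\infty)=2$ for each. Hence $I(R^+,\cF\cap\cG)\ge2$, and likewise $I(R^-,\cF\cap\cG)\ge2$. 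Altogether $I(O)+I(R^+)+I(R^-)\ge 10$.

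\textbf{Step 3: conclusion.} The remaining affine intersections other than $O$ number at most $16-10=6$, counted with multiplicity. In particular, at most six of them are affine points of $PG(2,q)$, which is the claim.

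The main obstacle is Step 2 at $O$. One must check carefully that two nodes with the same pair of tangents really give multiplicity at least $6$ rather than just $r_Or'_O=4$. I would verify this via the standard inequality $I(P)\ge r s$, with strict excess when tangents are shared, or more concretely by blowing up at $O$: the strict transforms meet again at the two points corresponding to $\ell^\pm$, adding at least $1$ each. If this refinement turned out to be delicate, the fallback would be to bound the intersection number at $O$ by a direct resultant computation in the parametrization along $\ell^\pm$.
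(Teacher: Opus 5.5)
Your proposal is correct and follows the paper's proof. Both apply B\'ezout's theorem to the two quartics, using $I(O,\cF\cap\cG)\ge 6$ (either two nodes sharing the tangents $\ell^\pm$, or at least one triple point) and $I(R^\pm,\cF\cap\cG)\ge 2$ (from the common tangent $\ell_\infty$), which leaves at most $16-10=6$ further affine intersections.

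The only difference is how a common component is handled. When $\cF$ is reducible, the paper directly bounds the $\mathbb{F}_q$-rational affine points on its two conic components. You instead rule out a common component altogether, using Lemma \ref{lem28032026C} together with the Frobenius conjugacy of the conics. This is slightly cleaner, and it also explicitly covers the case of an irreducible $\cF$ sharing a component with $\cG$, which the paper leaves implicit.
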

\begin{proof} Assume that neither $\alpha_0\ne 0$, nor $\alpha_1\ne 0$. Then $O$ is a double point of both $\cF$ and $\cG$, and the lines $\ell^+$ and $\ell^-$ are common tangents to 
$\cF$ and $\cG$ at $O$. Therefore, $I(O,\cF\cap \cG)\ge 2\cdot 2+2\ge 6$. Moreover, 
$R^+$ and $R^-$ are common points of $\cF$ and $\cG$, and $\ell_\infty$ is their common tangent at $R^+$ and $R^-$. Therefore, $I(R^+,\cF\cap \cG)\ge 1+1=2$, and  $I(R^-,\cF\cap \cG)\ge 1+1=2$. Suppose that $\cF$ and $\cG$ do not have any common component over an algebraic closure $\mathbb{K}$ of $\mathbb{F}_q$. Then the B\'ezout theorem yields $|\cF\cap \cG|\le 16-6-4=6$, and the claim follows. Suppose that $\cF$ is reducible over $\mathbb{F}$. From (iv) of Lemma \ref{lem27032026},  $\cF$ splits into two irreducible conics $\mathcal{C}_1$ and $\mathcal{C}_2$ which are not defined over $\mathbb{F}_q$. Since both pass through $O$ and are tangent to $\ell_\infty$, each of them may have at most three affine points other than $O$. Therefore, the number of affine points of $\cF$ other than $O$ which are defined over $\mathbb{F}_q$ does not exceed $6$. From this the claim follows.  

Assume that $\alpha_0\ne 0$ but $\alpha_1=0$. Then $O$ is a triple point for $\cF$, and a double point for $\cG$. Therefore, $I(O,\cF\cap\cG)\ge 6$. As before, $I(R^+,\cF\cap \cG)\ge 1+1=2$, and  $I(R^-,\cF\cap \cG)\ge 1+1=2$. From the above argument, if $\cF$ is reducible over $\mathbb{K}$, then the claim holds as the number of affine points of $\cF$ other than $O$ which are defined over $\mathbb{F}_q$ does not exceed $6$. Therefore, $\cF$ may be assumed to be irreducible over $\mathbb{F}$. From the B\'ezout theorem, if $n$ is the number of common affine points of $\cF$ and $\cG$ distinct from $O$, we have $n\le 16-6-4=6$, and the claim follows. The same argument may be used for the case where $\alpha_0=0$ and $\alpha_1\neq 0$.

It remains to deal with the case where $\alpha_0=\alpha_1=0$. Then $O$ is a triple point for both $\cF$ and $\cG$. Therefore, $I(O,\cF\cap \cG)\ge 3\cdot 3=9$. This together with  $I(R^+,\cF\cap \cG)\ge 1+1=2$, and  $I(R^-,\cF\cap \cG)\ge 1+1=2$, yield that $|\cF\cap\cG|\le 16-9-4\le 3$ unless $\cF$ and $\cG$ have a common component. If this exception occurs, then (v) in Lemma \ref{lem27032026D} shows that $\cF$ splits into two irreducible conics defined over $\mathbb{F}_{q^2}$ but $\mathbb{F}_{q}$. Such a conic passes through $O$ and hence it has at most three other points in $PG(2,q)$. Therefore, $\cF$ may contain at most six points in $PG(2,q)$ other than $O$. 
\end{proof}
\begin{proposition}
\label{pro29032026B} If $\alpha_0=\gamma_0=0$, but either $\alpha_1\ne 0$, or $\gamma_1\ne 0$, then $\cF\cap \cG$ 
contains at most six affine points in $PG(2,q)$ other than $O$. 
\end{proposition}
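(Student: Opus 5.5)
Under the hypothesis $\alpha_0=\gamma_0=0$, Lemma \ref{lem29032026} says that $\cF$ is a cubic splitting into three lines through $O$, namely the factors of the binary cubic form
$$A(X_1,X_2)=(\beta_0+1)X_1^3+s\beta_1X_1^2X_2+s(3-\beta_0)X_1X_2^2-s^2\beta_1X_2^3.$$
So it suffices to show that each such line $\ell$ meets $\cG$ in at most two affine points of $PG(2,q)$ other than $O$. Three lines times two points gives the bound six.

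\textbf{Step 1: lines not defined over $\mathbb{F}_q$.} A line of $\cF$ through $O$ that is not defined over $\mathbb{F}_q$ has $O$ as its only $\mathbb{F}_q$-rational point. Such lines contribute nothing, and only the $\mathbb{F}_q$-rational lines $X_1=\mu X_2$ (or $X_2=0$) need to be examined.

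\textbf{Step 2: restrict $B$ to a rational line.} Substitute $X_1=\mu X_2$ with $\mu\in\mathbb{F}_q$; the case $X_2=0$ is handled analogously. Since $\mu^2\neq s$, we have $N:=\mu^2-s\neq 0$. Then
$$B(\mu X_2,X_2)=\alpha_1 N X_2^2+\big(\beta_1\mu+\beta_0+3\mu^2+s\big)X_2^3+\gamma_1N^2X_2^4 .$$
Dividing by $X_2^2$ leaves a polynomial of degree at most $2$ in $X_2$. This polynomial is not identically zero when $\alpha_1\neq 0$ or $\gamma_1\neq 0$, because then its constant or leading coefficient is nonzero. Hence it has at most two nonzero roots in $\mathbb{F}_q$. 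Geometrically, $\ell$ is not a component of $\cG$, and B\'ezout combined with the multiplicity at $O$ (a point of multiplicity at least $2$ on $\cG$ when $\alpha_1\ne0$, and a triple point when $\alpha_1=0$) leaves at most $4-2=2$ further intersections. This is the same count.

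\textbf{Step 3: conclude and check the main obstacle.} Summing over the at most three rational lines gives at most six affine points of $\cF\cap\cG$ in $PG(2,q)$ other than $O$. The delicate point is making sure the reduced polynomial never vanishes identically. This is exactly where the hypothesis that $\alpha_1\neq 0$ or $\gamma_1\neq 0$ enters, together with $N\neq 0$ (which holds because $s$ is a non-square). For $X_2=0$ the restriction is $\alpha_1X_1^2+\beta_1X_1^3+\gamma_1X_1^4$, and the same reasoning applies. Repeated lines of $\cF$ only decrease the count, so the bound holds in all cases.
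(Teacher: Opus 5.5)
Your argument is correct, but it takes a different route from the paper's.

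\textbf{The paper's route.} The paper applies B\'ezout's theorem to $\cF\cdot\cG$ as a whole. It reads off the multiplicity of $O$ on $\cF$ (a triple point) and on $\cG$ (a node or a triple point, via Lemma \ref{lem28032026A}). It then subtracts the contribution at $O$ from $\deg(\cF)\deg(\cG)$. The possibility of a common component is treated separately, by appealing to the splitting of $\cG$ into two conics not defined over $\mathbb{F}_q$.

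\textbf{Your route.} You use the fact that here $\cF$ is already a union of lines through $O$. Every $\mathbb{F}_q$-rational point of $\cF$ other than $O$ lies on one of the at most three rational lines among them. Restricting $B$ to such a line gives $X_2^2$ (or $X_1^2$) times a polynomial of degree at most $2$. Its constant term $\alpha_1(\mu^2-s)$ or its leading coefficient $\gamma_1(\mu^2-s)^2$ is non-zero.

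\textbf{What each approach buys.} Yours is more elementary and self-contained. It needs neither the singularity analysis of $\cG$ nor any discussion of common components, because the hypothesis $\alpha_1\neq 0$ or $\gamma_1\neq 0$ shows directly that no rational line of $\cF$ lies on $\cG$. It also avoids the intersection-multiplicity bookkeeping at $O$ entirely. The paper's approach, in turn, is the same B\'ezout framework used in Proposition \ref{pro28032026}. There $\cF$ is a quartic that does not split into lines, so your line-by-line restriction is not available.

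\textbf{A minor slip.} The coefficient of $X_2^3$ in $B(\mu X_2,X_2)$ is $(\beta_1\mu+\beta_0)(\mu^2-s)+3\mu^2+s$, not $\beta_1\mu+\beta_0+3\mu^2+s$. This is harmless, since your argument uses only the constant and leading coefficients.
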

\begin{proof} From (i) and (iii*) in Lemma \ref{lem29032026}, $\deg(\cF)=3$ and $O$ is a triple point of $\cF$. 

If $\gamma_1\neq 0$, then $\deg(\cG)=4$ and $O$ is either a node where $\cF$ and $\cG$ have the same two tangent lines, or a triple point of $\cG$, by Lemmas \ref{lem28032026A}, \ref{lem27032026} and \ref{lem27032026D}.   From the B\'ezout theorem, if $n$ is the number of common affine points of $\cF$ and $\cG$ distinct from $O$, we have $n\le 12-2\cdot 2\le 6$ and $n\le 12-2\cdot 3\le 6$, respectively, unless $\cF$ and $\cG$ have a common component. In the exceptional case, Lemma \ref{lem28032026A} together with  (v) in Lemma \ref{lem27032026D} show that $\cG$ splits into two irreducible conics defined over $\mathbb{F}_{q^2}$ but $\mathbb{F}_{q}$. As in the proof of Proposition \ref{pro28032026}, this implies that 
the number of affine points of $\cG$ other than $O$ which are defined over $\mathbb{F}_q$ does not exceed $6$.

If $\gamma_1=0$ and $\alpha_1\ne 0$, then $\deg(\cG)=3$ and $O$ is a node by Lemmas \ref{lem28032026A} and \ref{lem28032026B}. From the B\'ezout theorem, if $n$ is the number of common affine points of $\cF$ and $\cG$ distinct from $O$, we have $n\le 12-2\cdot 3\le 6$, unless $\cF$ and $\cG$ have a common component. As we have already pointed out in the previous argument, this yields that 
the number of affine points of $\cG$ other than $O$ which are defined over $\mathbb{F}_q$ does not exceed $6$.
\end{proof}
\begin{proposition}
\label{pro29032026C} Let $\alpha_0=\alpha_1=\gamma_0=\gamma_1=0$. Then the number of affine points in $\cF\cap \cG$ other than $O$, is either $0$ or $q-1$. 
\end{proposition}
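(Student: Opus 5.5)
The plan is to stop working with $\cF$ and $\cG$ as plane curves and go back to the polynomial $g(X)$ over $\mathbb{F}_{q^2}$ that they encode by field reduction. By construction, $A(x_1,x_2)+iB(x_1,x_2)=g(x_1+ix_2)$ for all $x_1,x_2\in\mathbb{F}_q$. Since $1,i$ is an $\mathbb{F}_q$-basis of $\mathbb{F}_{q^2}$, the map $(x_1,x_2)\mapsto x_1+ix_2$ is a bijection between the affine points of $PG(2,q)$ lying on $\cF\cap\cG$ and the roots of $g(X)$ in $\mathbb{F}_{q^2}$. Under this bijection the origin $O$ corresponds to the root $X=0$. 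So the statement reduces to counting the non-zero roots of $g(X)$ in $\mathbb{F}_{q^2}$.

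Next I specialise $g$. The hypothesis $\alpha_0=\alpha_1=\gamma_0=\gamma_1=0$ means $b_0=b_2=0$. Hence $g(X)=b_1X^{q+2}+X^3=X^3(b_1X^{q-1}+1)$. As a consistency check, $A$ and $B$ are then the homogeneous cubics of Lemma \ref{lem29032026}, and $\cF$, $\cG$ both have a triple point at $O$. Two cases arise.
\begin{itemize}
\item If $b_1=0$, then $g(X)=X^3$, which has no non-zero root. The count is $0$. (Equivalently, $A=X_1(X_1^2+3sX_2^2)$ and $B=X_2(3X_1^2+sX_2^2)$ have only the common zero $O$ over $\mathbb{F}_q$.)
\item If $b_1\neq0$, the non-zero roots are exactly the solutions in $\mathbb{F}_{q^2}^*$ of $X^{q-1}=c$ with $c=-1/b_1$.
\end{itemize}

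The key step is counting the solutions of $X^{q-1}=c$ in $\mathbb{F}_{q^2}^*$. This group is cyclic of order $q^2-1$, and $(q-1)\mid(q^2-1)$. So the homomorphism $x\mapsto x^{q-1}$ has kernel of order $\gcd(q-1,q^2-1)=q-1$. Its image is the unique subgroup of order $q+1$, namely $\{c: c^{q+1}=1\}$. Consequently $X^{q-1}=c$ has exactly $q-1$ solutions when $c^{q+1}=1$, i.e. when $\operatorname{N}(b_1)=1$, and no solution otherwise. This is the same fact already used in the proof of Theorem \ref{th03042026B}.

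Combining the cases, the number of affine points of $\cF\cap\cG$ in $PG(2,q)$ other than $O$ is $0$ or $q-1$. I do not expect a real obstacle. The only point needing care is the correspondence in the first step: points of $\cF\cap\cG$ should be counted over $\mathbb{F}_q$ only, which matches roots of $g$ in $\mathbb{F}_{q^2}$. Points over extensions of $\mathbb{F}_q$ are not counted.
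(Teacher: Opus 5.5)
Your proposal is correct and follows the paper's argument. With $b_0=b_2=0$, the non-zero roots of $g$ are the roots of $b_1X^{q-1}+1$, which has either $q-1$ or no roots in $\mathbb{F}_{q^2}$ according as $\operatorname{N}(b_1)=1$ or not. Your write-up is in fact slightly more careful than the paper's: the paper writes $X^{q+1}$ where it should be $X^{q+2}$, and says ``roots in $\mathbb{F}_q$'' where $\mathbb{F}_{q^2}$ is meant, whereas you state both correctly. You also make explicit the bijection between affine $\mathbb{F}_q$-points of $\cF\cap\cG$ and roots of $g$ in $\mathbb{F}_{q^2}$, and you treat the trivial case $b_1=0$.
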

\begin{proof} In this case, $b_0=b_2=0$, and $b_1\ne 0$ may be assumed. Then $g(X)=b_1\tau X^{q+1}+X^3$. Therefore, 
the non-zero roots of $g(X)$ are the roots of the polynomial $h(X)=b_1\tau X^{q-1}+1$. 
This polynomial $h(X)$ has either $q-1$ or no roots in $\mathbb{F}_q$ according as 
$(b_1\tau)^{q+1}=1$, or $(b_1\tau)^{q+1}\ne 1$.  It may be noticed that the former case occurs for as many as $q+1$ values of $b_1$.  
\end{proof}

We go on with the even characteristic case, and show that the above results established for the odd characteristic case hold true. We limit ourselves to writing explicitly down equations and conclusions whenever they differ from the odd characteristic case.   
\subsection{Case $q\ge 4$ even}
%We now deal with the case  $q$  even, and show that the same results as in the odd characteristic case hold.
 Let $\mathbb{F}_{q^2}=\mathbb{F}_q(\epsilon)$ where $\epsilon^2+\epsilon+\delta=0$ and $\operatorname{Tr}(\delta)=1$. Then $\epsilon^q=\epsilon+1$ and $\epsilon^{q+1}=\delta$.
%As in the previous section, we may assume $\tau=1$, and consider the equation
%\begin{equation} \label{eq1}
%	X^{q+1}(b_0 + b_1 X + b_2 X^{q+1}) + X^3 = 0.
%\end{equation}
Write
\[
b_0=\alpha_0+\epsilon\alpha_1,\,\,
b_1=\beta_0+\epsilon\beta_1,\,\,
b_2=\gamma_0+\epsilon\gamma_1,\,\, 
x=x_1+\epsilon x_2,
\qquad
\alpha_i,\beta_i,\gamma_i,x_i\in\mathbb{F}_q.
\]
Then $x^3=(x_1^3+\delta x_1x_2^2+\delta x_2^3)+
\epsilon(x_1^2x_2+x_1x_2^2+(\delta+1)x_2^3)$, and $x^{q+1}=x_1^2+x_1x_2+\delta x_2^2$. Therefore,
$g(x)=A(x_1,x_2)+\epsilon B(x_1,x_2)$
%$N=X^{q+1}$. Then
%$
%N=x_1^2+x_1x_2+\delta x_2^2
%$.
%Moreover, write
%$
%X^3=P+\epsilon Q,
%$
%where
%\[
%P=x_1^3+\delta x_1x_2^2+\delta x_2^3,\qquad
%Q=x_1^2x_2+x_1x_2^2+(\delta+1)x_2^3.
%\]
where 
$$
\begin{array}{lll}
A(x_1,x_2)=(x_1^2+x_1x_2+\delta x_2^2)\big(\alpha_0 + \beta_0 x_1 + \delta \beta_1 x_2 + \gamma_0 (x_1^2+x_1x_2+\delta x_2^2)\big)+\\
\qquad\qquad\quad\,\,   x_1^3+\delta x_1x_2^2+\delta x_2^3;\\
B(x_1,x_2)=(x_1^2+x_1x_2+\delta x_2^2)\big(\alpha_1 + \beta_1 x_1 + \beta_0 x_2 + \beta_1 x_2 + \gamma_1  (x_1^2+x_1x_2+\delta x_2^2)\big)+\\
\qquad\qquad\quad\,\,x_1^2x_2+x_1x_2^2+(\delta+1)x_2^3.
\end{array}
$$
In $PG(2,q)$, let $\cF$ be the plane curve of affine equation
$$
\begin{array}{lll}
A(X_1,X_2)=(X_1^2+X_1X_2+\delta X_2^2)\big(\alpha_0 + \beta_0 X_1 + \delta \beta_1 X_2 + \gamma_0 (X_1^2+X_1X_2+\delta X_2^2)\big)+\\
\qquad\qquad\quad\,\,   X_1^3+\delta X_1X_2^2+\delta X_2^3.\\
\end{array}
$$
Let $R^+=(\epsilon:1:0)$, and $R^-=(\epsilon^q:1:0)$, and 
%Separating the components with respect to the basis $\{1,\epsilon\}$ in \eqref{eq1}, we obtain the system
%\[
%\begin{cases}
%	F(x_1,x_2)=\gamma_0 N^2 + N(\beta_0 x_1 + \delta \beta_1 x_2) + P + \alpha_0 N = 0,\\[4pt]
%	G(x_1,x_2)=\gamma_1 N^2 + N(\beta_0 x_2 + \beta_1 x_1 + \beta_1 x_2) + Q + \alpha_1 N = 0.
%\end{cases}
%\]
%We now study the plane curves $F(X_1,X_2)=0$ and $G(X_1,X_2)=0$, starting from the following analogue of Lemma~4.
\begin{lemma}\label{lem4} Lemma \ref{lem27032026} holds true for $q$ even. 
\end{lemma}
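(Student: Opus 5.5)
The plan is to run the proof of Lemma \ref{lem27032026} again, with $N(X_1,X_2)=X_1^2+X_1X_2+\delta X_2^2$ in the role of $X_1^2-sX_2^2$ and $\epsilon$ in the role of $i$. In even characteristic the only place that proof used odd characteristic was step (iv), so that is the one step needing a new computation. Throughout we assume $\alpha_0\neq 0$ and $\gamma_0\neq 0$.

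First, the factorisation of $N$. Over $\mathbb{F}_{q^2}$ we have
$$N=(X_1+\epsilon X_2)(X_1+\epsilon^qX_2),$$
since $\epsilon+\epsilon^q=1$ and $\epsilon^{q+1}=\delta$. The two factors are distinct because $\epsilon\notin\mathbb{F}_q$, and $N$ is irreducible over $\mathbb{F}_q$ because $\operatorname{Tr}(\delta)=1$. In the even case we therefore take $\ell^+:X_1=\epsilon X_2$ and $\ell^-:X_1=\epsilon^qX_2$ as the two lines through $O$, meeting $\ell_\infty$ at $R^+$ and $R^-$ respectively.

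Items (i)--(iii) are read off from $A$.
\begin{itemize}
\item[(i)] The homogeneous part of degree $4$ is $\gamma_0N^2\neq 0$, so $\deg(\cF)=4$.
\item[(ii)] The points at infinity are the zeros of $N^2$, namely $R^+$ and $R^-$. Each has multiplicity $2$, so $I(R^\pm,\cF\cap\ell_\infty)=2$.
\item[(iii)] There is no constant or linear term, and the part of lowest degree is $\alpha_0N\neq0$. Hence $O$ is a double point whose two tangents are the distinct lines $\ell^+,\ell^-$, so $O$ is a node.
\end{itemize}

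For (iv), suppose $R^+$ is singular. Then $I(R^+,\cF\cap\ell^+)\ge2$, and $I(O,\cF\cap\ell^+)\ge3$ because $\ell^+$ is tangent at $O$. The total $5$ exceeds $4$, so B\'ezout forces $\ell^+$ to be a component of $\cF$. The Frobenius collineation preserves $\cF$ and swaps $\ell^+$ with $\ell^-$, so $\ell^-$ is a component as well. Thus $N$ divides $A$. Since $A=N\cdot(\ldots)+C$ with cubic part
$$C=X_1^3+\delta X_1X_2^2+\delta X_2^3,$$
it would follow that $N$ divides $C$. This is the one genuinely new computation, and it is expected to be short. Evaluate at $(\epsilon,1)$: using $\epsilon^2=\epsilon+\delta$ we get $\epsilon^3=\epsilon+\delta+\delta\epsilon$, so
$$C(\epsilon,1)=(\epsilon+\delta+\delta\epsilon)+\delta\epsilon+\delta=\epsilon\neq0.$$
This contradiction proves (iv) for $R^+$, and the argument for $R^-$ is symmetric.

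For (v), repeat the odd-case argument word for word. A linear component $\ell$ cannot be $\ell_\infty$, because $\gamma_0N^2\neq0$. So by (ii) the line $\ell$ passes through $R^+$, say. By (iv) it is then the unique component through $R^+$ and hence the tangent to $\cF$ there. On the other hand $I(R^+,\cF\cap\ell_\infty)=2$ makes $\ell_\infty$ that tangent, so $\ell=\ell_\infty$, a contradiction. Therefore a reducible $\cF$ splits into two irreducible conics $\mathcal{C}_1,\mathcal{C}_2$. By (iv) exactly one of them passes through $R^+$; call it $\mathcal{C}_1$. It is tangent to $\ell_\infty$ at $R^+$, so it does not contain $R^-$. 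The Frobenius map swaps $R^+$ and $R^-$, hence $\mathcal{C}_1$ is not defined over $\mathbb{F}_q$. It is, however, defined over $\mathbb{F}_{q^2}$, since $\cF$ is and $\mathcal{C}_1$ is mapped either to itself or to $\mathcal{C}_2$.
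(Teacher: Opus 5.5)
Your proof is correct and follows the same route as the paper. You read (i)--(iii) off the highest- and lowest-degree homogeneous parts of $A$, and you obtain (iv) and (v) by rerunning the odd-characteristic argument with $N=X_1^2+X_1X_2+\delta X_2^2$ and $\epsilon$ in place of $X_1^2-sX_2^2$ and $i$. The paper only says ``arguing as in the odd characteristic case'', even though its odd-case contradiction in (iv) explicitly used odd characteristic; your check that $C(\epsilon,1)=\epsilon\neq 0$, so $N\nmid C$, is exactly the characteristic-$2$ replacement for that step.
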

\begin{proof}
	As $\gamma_0 \neq 0$, the leading term of $A(X_1,X_2)$ is $\gamma_0 (X_1^2+X_1X_2+\delta X_2^2)$, hence $\deg(\mathcal{F}) = 4$. The points of $\cF$ at infinity come from the zeros of the polynomial $X_1^2+X_1X_2+\delta X_2^2$ which are $(\epsilon,1)$ and $(\epsilon^q,1)$, up to a non-zero constant. Therefore, $R^+$ and $R^-$ are the points of $\cF$ at infinity. Moreover, from $\alpha_0 \neq 0$, the lowest degree term of $A(X_1,X_2)$ is $X_1^2+X_1X_2+\delta X_2^2$ which factors into two distinct polynomials, namely $X_1+\epsilon X_2$ and  $X_1 + \epsilon^q X_2$. 
    Thus $O$ is a node with tangent lines $\ell^+$ and $\ell^-$. Claims (iv) and (v) can be shown by arguing as in the odd characteristic case.
\end{proof}

\begin{lemma}\label{lemmaqev1} Lemma \ref{lem27032026D} holds true for $q$ even. 
\end{lemma}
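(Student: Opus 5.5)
We must show, for $q$ even with $\alpha_0=0$ and $\gamma_0\ne 0$, that (i), (ii), (iv) and (v) of Lemma \ref{lem27032026} hold for the even-characteristic $\cF$, and that (iii*) holds: $O$ is a triple point. The plan is to redo the argument of Lemma \ref{lem27032026D} using the even-characteristic equation of $A(X_1,X_2)$.

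\textbf{Degree, points at infinity and the triple point.} Write $N(X_1,X_2)=X_1^2+X_1X_2+\delta X_2^2$, which is irreducible over $\mathbb{F}_q$ because $\operatorname{Tr}(\delta)=1$.
\begin{itemize}
\item The degree-$4$ part of $A$ is $\gamma_0N^2\neq 0$, so $\deg(\cF)=4$. Its zeros are $(\epsilon:1)$ and $(\epsilon^q:1)$, so the points at infinity are $R^+$ and $R^-$. Neither depends on $\alpha_0$, so (i) and (ii) hold.
\item With $\alpha_0=0$, the quadratic part of $A$ vanishes. The cubic part is
$$N\,(\beta_0X_1+\delta\beta_1X_2)+X_1^3+\delta X_1X_2^2+\delta X_2^3.$$
\item To prove (iii*), check that this cubic is not identically zero. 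Its $X_1^3$ coefficient is $\beta_0+1$ and its $X_2^3$ coefficient is $\delta^2\beta_1+\delta$. If both vanished, then $\beta_0=1$ and $\beta_1=1/\delta$. The $X_1^2X_2$ coefficient would then be $\beta_0+\delta\beta_1=1+1=0$. The $X_1X_2^2$ coefficient would be $\delta\beta_0+\delta\beta_1+\delta=\delta\beta_1=1\neq 0$. Hence the cubic is nonzero and $O$ is a triple point.
\end{itemize}

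\textbf{Non-singularity of $R^\pm$, property (iv).} I follow the odd case. Suppose $R^+$ were singular, so $I(R^+,\cF\cap\ell^+)\ge 2$.
\begin{itemize}
\item Since $O$ is a triple point, $I(O,\cF\cap\ell^+)\ge 3$. The total is at least $5>4$, so by B\'ezout $\ell^+$ is a component of $\cF$.
\item The Frobenius collineation preserves $\cF$ and swaps $\ell^+$ and $\ell^-$, so $\ell^-$ is a component too. Hence $N$ divides $A$.
\item Then $N$ divides $X_1^3+\delta X_1X_2^2+\delta X_2^3$. Reducing this cubic modulo $N$ (in characteristic $2$, $X_1^2\equiv X_1X_2+\delta X_2^2$) should give a nonzero remainder.
\item This last computation is the step I expect to be the main obstacle, since the odd-case argument used $X_1(X_1^2+3sX_2^2)$ and does not transfer verbatim. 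Equivalently, evaluate $x^3$ at $x=\epsilon$: $\epsilon$ is a root of $N(X_1,1)$ and $\epsilon^3\neq 0$, so the cubic is not divisible by $N$.
\end{itemize}

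\textbf{Reducibility, property (v).} First, no line is a component of $\cF$.
\begin{itemize}
\item A linear component $\ell\ne\ell_\infty$ passes through $R^+$ or $R^-$. Since the point is non-singular, $\ell$ would be the tangent there.
\item But $I(R^+,\cF\cap\ell_\infty)=2$, because $N^2$ has a double root there, so the tangent is $\ell_\infty$. This is a contradiction.
\end{itemize}
So any nontrivial factorization over $\mathbb{K}$ is into two irreducible conics. Every component of $\cF$ through $O$ is a conic. Two conics through $O$ give multiplicity at most $2$ at $O$, and a single conic through $O$ gives multiplicity $1$. Either way this contradicts (iii*). Hence $\cF$ is irreducible, and (v) holds vacuously, exactly as in Lemma \ref{lem27032026D}.
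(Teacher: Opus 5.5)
Your overall route is the paper's. The paper's proof only records that for $\alpha_0=0$ the lowest-degree part of $A$ is a nonzero cubic, so $O$ is a triple point, and then defers (iv) and (v) to the odd-characteristic arguments. You fill in those details. Your cubic $N(\beta_0X_1+\delta\beta_1X_2)+X_1^3+\delta X_1X_2^2+\delta X_2^3$ is the correct one; the paper's display drops the $\delta\beta_1X_2$ term. Your coefficient check that this cubic never vanishes is right, and so are your arguments for (i), (ii), the B\'ezout/Frobenius reduction in (iv), and (v).

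The one step that fails as written is the justification of $N\nmid X_1^3+\delta X_1X_2^2+\delta X_2^3$ in (iv). Evaluating this cubic at the root $(\epsilon,1)$ of $N$ gives $\epsilon^3+\delta\epsilon+\delta$, not $\epsilon^3$, so $\epsilon^3\neq 0$ proves nothing.

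Here is the correct check. From $\epsilon^2=\epsilon+\delta$ one gets $\epsilon^3=\epsilon+\delta+\delta\epsilon$, so the value at $(\epsilon,1)$ is $\epsilon\neq 0$. Equivalently, reducing modulo $N$ with $X_1^2\equiv X_1X_2+\delta X_2^2$ leaves the nonzero remainder $X_1X_2^2$.

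In fact your own (iii*) computation already gives this. Suppose $X_1^3+\delta X_1X_2^2+\delta X_2^3=NM$ with $M=\mu_0X_1+\mu_1X_2$. Take $\beta_0=\mu_0$ and $\beta_1=\mu_1/\delta$. Then the cubic part becomes $N(M+M)=0$ in characteristic $2$, which contradicts your check, since that check holds for all $\beta_0,\beta_1$. With this step repaired, the proof is complete.
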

\begin{proof}
%By Lemma~4, $\deg(\mathcal F)=4$, its points at infinity are $R_\epsilon$ and $R_{\epsilon^q}$, and both are non-singular.
This time, the lowest degree term in $A(X_1,X_2)$ is 
$$\beta_0(X_1^2+X_1X_2+\delta X_2^2)X_1+ X_1^3+\delta X_1X_2^2+\delta X_2^3$$
%Since $\alpha_0=0$, we have
%\[
%F(X_1,X_2)=\gamma_0 N(X_1,X_2)^2+N(X_1,X_2)(\beta_0X_1+\delta\beta_1X_2)+P(X_1,X_2),
%\]
%where
%\[
%N(X_1,X_2)=X_1^2+X_1X_2+\delta X_2^2,
%\qquad
%P(X_1,X_2)=X_1^3+\delta X_1X_2^2+\delta X_2^3.
%\]
%Hence the lowest-degree part of $F$ at the origin is
%\[
%(\beta_0X_1+\delta\beta_1X_2)+P(X_1,X_2),
%\]
which is a non-vanishing homogeneous polynomial of degree $3$. Therefore $O$ is a triple point. The other claims can be shown using the arguments from the proof of Lemma \ref{lem4}. 
%As in the proof of Lemma~4, $\mathcal F$ has no linear components. Therefore, if $\mathcal F$ were reducible over an algebraic closure of $\mathbb F_q$, then it would split into two irreducible conics. But this is impossible,
% since $O$ would be  at most a double point, a contradiction with (iii*). This proves (v) and in particular that $\mathcal F$ is absolutely irreducible.
\end{proof}
\begin{lemma}\label{lemmaqev2} Lemma \ref{lem28032026B} holds true for $q$ even.
%Assume that $\alpha_0 \neq 0$ and $\gamma_0 = 0$. Then (iii) of Lemma \ref{lem4} holds. Moreover,
%\item[(i$^\ast$)] $\deg(\mathcal F)=3$.
\end{lemma}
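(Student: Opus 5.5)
We need to show, for $q$ even with $\alpha_0\ne 0$ and $\gamma_0=0$, that $\deg(\cF)=3$ and that $O$ is a node of $\cF$ with tangent lines $\ell^+$ and $\ell^-$. In the even case these are the lines $X_1=\epsilon X_2$ and $X_1=\epsilon^q X_2$ joining $O$ to $R^+$ and $R^-$. The plan is to read both facts off the homogeneous components of $A(X_1,X_2)$, as in the proof of Lemma \ref{lem4}.

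First we set $\gamma_0=0$ and write $N=X_1^2+X_1X_2+\delta X_2^2$. Then
$$A(X_1,X_2)=\alpha_0 N+\big(N(\beta_0X_1+\delta\beta_1X_2)+X_1^3+\delta X_1X_2^2+\delta X_2^3\big).$$
There is no term of degree $4$, so $\deg(\cF)\le 3$, and the degree-$3$ component is $N(\beta_0X_1+\delta\beta_1X_2)+X_1^3+\delta X_1X_2^2+\delta X_2^3$. We must check this component is not identically zero. Its coefficient of $X_1^2X_2$ is $\beta_0+\delta\beta_1$, while its coefficient of $X_1X_2^2$ is $\beta_0\delta+\delta\beta_1+\delta$. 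If the cubic part vanished, these coefficients, together with the coefficients of $X_1^3$ and $X_2^3$, would all vanish. The coefficient of $X_1^3$ is $\beta_0+1$, so $\beta_0=1$. Then the coefficient of $X_1^2X_2$ gives $1+\delta\beta_1=0$. The coefficient of $X_1X_2^2$ gives $\delta(1+\beta_1+1)=\delta\beta_1=0$ (characteristic $2$), which contradicts $\delta\beta_1=1$. Hence the cubic part is nonzero and $\deg(\cF)=3$, which is claim (i*).

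Next, since $\alpha_0\ne 0$, the lowest-degree homogeneous component of $A$ at the origin is $\alpha_0 N$. This has degree $2$, so $O$ is a double point of $\cF$. Over $\mathbb{F}_{q^2}$, $N$ factors as $(X_1+\epsilon X_2)(X_1+\epsilon^qX_2)$, because $\epsilon+\epsilon^q=1$ and $\epsilon^{q+1}=\delta$. The two factors are distinct since $\epsilon\notin\mathbb{F}_q$. Thus the tangent cone at $O$ consists of two distinct lines, namely $\ell^+$ and $\ell^-$, and $O$ is a node. This is (iii).

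The only step needing care is the non-vanishing of the cubic part in characteristic $2$. The coefficient comparison above settles it.
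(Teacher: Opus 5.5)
Your proof is correct and follows the same route as the paper: you read $\deg(\cF)$ off the top homogeneous component of $A(X_1,X_2)$, and you get the node at $O$ from the lowest component $\alpha_0(X_1+\epsilon X_2)(X_1+\epsilon^q X_2)$ with $\epsilon\neq\epsilon^q$. You also check, via the coefficients of $X_1^3$, $X_1^2X_2$ and $X_1X_2^2$, that the cubic component cannot vanish in characteristic $2$; the paper skips this when it deduces $\deg(\cF)=3$ from the vanishing of the quartic term alone, so your version is slightly more complete.
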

\begin{proof}
Since $\gamma_0=0$, the quartic term in $A(X_1,X_2)$ vanishes, and hence $\deg(\mathcal F)=3$. 
Since $\alpha_0\neq 0$, the lowest-degree term in $A(X_1,X_2)$ is
\[\alpha_0(X_1+\epsilon X_2)(X_1+\epsilon^qX_2).
\]
As $\epsilon\neq \epsilon^q$, the two tangent lines at $O$ are distinct. Therefore $O$ is a node.
\end{proof}

\begin{lemma} Lemma \ref{lem29032026} holds true for $q$ even.
%Assume that $\alpha_0=\gamma_0=0$. Then (i$^\ast$) and (iii$^\ast$) of Lemmas \ref{lemmaqev2} and \ref{lemmaqev1}  hold. Moreover,
%\item[(vi)] $\mathcal F$ splits into three (not necessarily distinct) lines through $O$.
\end{lemma}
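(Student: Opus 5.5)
With $\alpha_0=\gamma_0=0$ the polynomial $A(X_1,X_2)$ should reduce to a homogeneous cubic, so every claim follows from factoring it. I would substitute $\alpha_0=\gamma_0=0$ into the even-characteristic expression for $A(X_1,X_2)$, which gives
\[
A(X_1,X_2)=(X_1^2+X_1X_2+\delta X_2^2)(\beta_0X_1+\delta\beta_1X_2)+X_1^3+\delta X_1X_2^2+\delta X_2^3 .
\]
Every monomial here has degree exactly $3$. Expanding, the coefficient of $X_1^3$ is $\beta_0+1$. The coefficient of $X_1^2X_2$ is $\beta_0+\delta\beta_1$, that of $X_1X_2^2$ is $\delta\beta_0+\delta\beta_1+\delta$, and that of $X_2^3$ is $\delta^2\beta_1+\delta$. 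So $A$ is a homogeneous cubic form over $\mathbb{F}_q$.

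The main check is that this form does not vanish identically, since otherwise $\cF$ would not be a curve at all. Vanishing of the $X_1^3$ coefficient forces $\beta_0=1$ (characteristic $2$). Vanishing of the $X_2^3$ coefficient then forces $\delta\beta_1=1$, because $\delta\neq0$. Substituting into the $X_1^2X_2$ coefficient gives $\beta_0+\delta\beta_1=1+1=0$, so that condition imposes nothing new. The $X_1X_2^2$ coefficient becomes $\delta(\beta_0+\beta_1+1)=\delta\beta_1=1\neq0$. Hence $A\not\equiv0$. This small computation, which replaces the odd-characteristic observation in Lemma \ref{lem29032026}, is the only step that could genuinely fail, and it does not.

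Once $A$ is a nonzero homogeneous cubic, the conclusions are immediate. Its degree is $3$, which gives (i*). Its lowest-degree part at $O$ is the whole form, of degree $3$, so $O$ is a triple point, which gives (iii*). For (vi), a binary form over the algebraically closed field $\mathbb{K}$ splits into linear factors $\lambda_jX_1+\mu_jX_2$ for $j=1,2,3$, not necessarily distinct. Hence $\cF$ is the union of three lines through $O$, exactly as in the proof of Lemma \ref{lem29032026}.
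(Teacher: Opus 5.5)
Your proposal is correct and follows the paper's proof. The paper substitutes $\alpha_0=\gamma_0=0$ to obtain the homogeneous cubic $A(X_1,X_2)$, and reads off $\deg(\cF)=3$, the triple point at $O$, and the splitting into three lines through $O$; you do the same. Your extra check that this cubic is not identically zero (the $X_1X_2^2$ coefficient reduces to $\delta\beta_1=1$) supplies a step the paper leaves implicit.
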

\begin{proof}
In this case, 
\[
A(X_1,X_2)=(X_1^2+X_1X_2+\delta X_2^2)(\beta_0X_1+\delta\beta_1X_2)+ X_1^3+\delta X_1X_2^2+\delta X_2^3,
\]
which is a homogeneous polynomial of degree $3$. Therefore $O$ is a triple point and $\mathcal F$ splits into three (not necessarily distinct) lines through $O$.
\end{proof}
\begin{lemma}  
\label{lem02042026}
Lemma \ref{lem28032026A} holds true for $q$ even. 
%The same results proved above for the curve $\mathcal F$ hold true for the curve $\mathcal G$.
%\begin{lemma}
%	The plane curves $\mathcal F: F=0$ and $\mathcal G: G=0$ are distinct.
\end{lemma}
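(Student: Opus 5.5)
We have to show that, for $q$ even, the claims of Lemmas \ref{lem27032026}, \ref{lem27032026D}, \ref{lem28032026B} and \ref{lem29032026} also hold for the curve $\cG$ of affine equation $B(X_1,X_2)=0$, where
$$B(X_1,X_2)=N(X_1,X_2)\big(\alpha_1+\beta_1X_1+(\beta_0+\beta_1)X_2+\gamma_1N(X_1,X_2)\big)+X_1^2X_2+X_1X_2^2+(\delta+1)X_2^3,$$
and $N=X_1^2+X_1X_2+\delta X_2^2=(X_1+\epsilon X_2)(X_1+\epsilon^qX_2)$. The hypotheses are now read on $\alpha_1,\gamma_1$ in place of $\alpha_0,\gamma_0$. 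The plan is to redo the proofs of Lemmas \ref{lem4}, \ref{lemmaqev1}, \ref{lemmaqev2} and of the even analogue of Lemma \ref{lem29032026} with $A$ replaced by $B$. Each of those proofs only used three features of $A$: its top-degree form, its lowest-degree form at $O$, and the invariance of $\cF$ under the Frobenius collineation of $PG(2,q)$. So it suffices to check these three features for $B$.

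\emph{Top-degree part.} If $\gamma_1\ne0$, the quartic part is $\gamma_1N^2$. Hence $\deg(\cG)=4$ and the points at infinity are exactly $R^+$ and $R^-$, each with $I(R^\pm,\cG\cap\ell_\infty)=2$. If $\gamma_1=0$, the cubic part is
$$N\,\big(\beta_1X_1+(\beta_0+\beta_1)X_2\big)+X_1^2X_2+X_1X_2^2+(\delta+1)X_2^3,$$
and $\deg(\cG)=3$ provided this form is non-zero. Non-vanishing holds because $X_1^2X_2+X_1X_2^2+(\delta+1)X_2^3$ is not divisible by $N$: reduce it modulo $N$, i.e.\ substitute $X_1=\epsilon X_2$ and use $\epsilon^2=\epsilon+\delta$.

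\emph{Lowest-degree part at $O$.} If $\alpha_1\ne0$, it is $\alpha_1N$, a product of the two distinct factors $X_1+\epsilon X_2$ and $X_1+\epsilon^qX_2$. So $O$ is a node with tangents $\ell^+,\ell^-$, which gives (iii). If $\alpha_1=0$, it is the displayed cubic form, which is non-zero by the computation above. So $O$ is a triple point, giving (iii*). When also $\gamma_1=0$, $B$ is itself this homogeneous cubic and therefore splits over $\mathbb{K}$ into three lines through $O$, which gives (vi).

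\emph{Claims (iv) and (v).} Since $B$ has coefficients in $\mathbb{F}_q$, the Frobenius collineation preserves $\cG$ and swaps $R^+,R^-$ and $\ell^+,\ell^-$. The Bézout argument of Lemma \ref{lem27032026} then applies verbatim, with one step to check. If $R^+$ were singular, $\ell^+$ and $\ell^-$ would both be components of $\cG$, so $N$ would divide $B$. That forces $N$ to divide $X_1^2X_2+X_1X_2^2+(\delta+1)X_2^3$, which the reduction modulo $N$ already excludes. This step replaces the odd-characteristic parity argument and is the only real obstacle. The remaining steps are unchanged: a linear component would have to be tangent at $R^\pm$, hence equal to $\ell_\infty$, a contradiction; the resulting two-conic splitting is not defined over $\mathbb{F}_q$; and for (v) in the setting of Lemma \ref{lem27032026D}, the triple point at $O$ rules out a splitting into two conics.

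The decisive computation is therefore to verify that $N\nmid X_1^2X_2+X_1X_2^2+(\delta+1)X_2^3$ in characteristic $2$. Setting $X_2=1$, $X_1=\epsilon$, the value is $\epsilon^2+\epsilon+\delta+1=1\neq0$, using $\epsilon^2+\epsilon+\delta=0$. Hence $N$ does not divide it, and the cubic form is also non-zero.
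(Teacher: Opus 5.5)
Your proof is correct for the statement as written. Transferring the proofs of Lemmas \ref{lem4}, \ref{lemmaqev1} and \ref{lemmaqev2} from $A$ to $B$ needs only three things: the homogeneous parts of $B$ of degrees $2,3,4$, and its invariance under the Frobenius collineation. Your evaluation of $X_1^2X_2+X_1X_2^2+(\delta+1)X_2^3$ at $(\epsilon,1)$, which gives $1$, is the right even-characteristic replacement for the parity argument. It gives $\deg(\cG)=3$ when $\gamma_1=0$, an exact triple point at $O$ when $\alpha_1=0$, and the non-singularity of $R^\pm$.

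The paper does not write this transfer out; in odd characteristic too it only asserts that the arguments of Lemma \ref{lem27032026} carry over. The proof it attaches to this lemma establishes a different fact: the even-characteristic analogue of Lemma \ref{lem28032026C}, namely $\cF\neq\cG$. It assumes $A=cB$ with $c\in\mathbb{F}_q$ and compares the coefficients of $X_1^3$, $X_1^2X_2$ and $X_1X_2^2$. From this it deduces $\beta_1\neq 0$ and then $c^2+c+\delta=0$, which is impossible since $T^2+T+\delta$ is irreducible over $\mathbb{F}_q$.

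So the two arguments complement each other. Yours supplies the local and global shape of $\cG$ used in Proposition \ref{pro02042026A}. The paper's supplies the non-proportionality of $A$ and $B$, which the B\'ezout step there needs when $\cF$ is irreducible and both curves have a node at $O$ with the same tangents.

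If you read the lemma as also covering $\cF\neq\cG$, you need to add that coefficient comparison. In characteristic $2$ the $X_1^2X_2$-coefficient of $B$ is $\beta_0+1$, not the $\beta_0+\beta_1+1$ used in the paper. The contradiction still goes through. If $\beta_1=0$, the $X_1^3$ equation forces $\beta_0=1$, and the $X_1^2X_2$ equation then reads $1=c\cdot 0$. The remaining step to $c^2+c+\delta=0$ uses only the $X_1^3$ and $X_1X_2^2$ equations.
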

\begin{proof}
Assume on the contrary that $A(X_1,X_2)=c B(X_1,X_2)$ with a constant $c\in \mathbb{F}_q$. Comparing the coefficients of $X_1^3$, $X_1^2X_2$ and $X_1X_2^2$, we obtain 
%Since $F,G \in \mathbb{F}_q[X_1,X_2]$, we have $c \in \mathbb{F}_q$.
%				The coefficients of $X_1^3$, $X_1^2X_2$ and $X_1X_2^2$ in $F$ are
%		\[
%		\beta_0+1,\qquad \beta_0+\delta\beta_1,\qquad \delta(\beta_0+\beta_1+1),
%		\]
%		respectively, whereas the corresponding coefficients in $G$ are
%		\[
%		\beta_1,\qquad \beta_0+\beta_1+1,\qquad \beta_0+(1+\delta)\beta_1+1.
%		\]
		%Hence
	\begin{equation}\label{coef1}
			\beta_0+1=c\beta_1,
		\end{equation}
		\begin{equation}\label{coef2}
			\beta_0+\delta\beta_1=c(\beta_0+\beta_1+1),
		\end{equation}
		\begin{equation}\label{coef3}
			\delta(\beta_0+\beta_1+1)=c\bigl(\beta_0+(1+\delta)\beta_1+1\bigr).
		\end{equation}
				If $\beta_1=0$, then \eqref{coef1} gives $\beta_0=1$, and hence \eqref{coef2} yields $1=0$, a contradiction. Therefore,  $\beta_1\neq 0$.		
		From \eqref{coef1}, 
		$
		\beta_0+\beta_1+1=(c+1)\beta_1,
		$ and $
		\beta_0+(1+\delta)\beta_1+1=(c+1+\delta)\beta_1.
		$
		Substituting in \eqref{coef3} gives 
		$
		\delta(c+1)\beta_1=c(c+1+\delta)\beta_1
		$.
		Since $\beta_1\neq 0$, it follows 
		$
		\delta(c+1)=c(c+1+\delta)$,
		that is,
		$
		c^2+c+\delta=0$.		
		But this is a contradiction, as $\operatorname{Tr}(\delta)=1$ yields that the polynomial
		$T^2+T+\delta$ is irreducible over $\mathbb F_q$.		
		%Therefore $\mathcal F\neq \mathcal G$.
	\end{proof}

\begin{proposition} 
\label{pro02042026A}
All claims in Propositions \ref{pro28032026}, \ref{pro29032026B}, \ref{pro29032026C} hold true for $q\ge 4$. %Proposition \ref{pro29032026P} hold true for $q$ even. 
%If $\gamma_0\neq 0$ and $\gamma_1\neq 0$, then $\mathcal F\cap \mathcal G$ contains at most six affine points different from $O$.
\end{proposition}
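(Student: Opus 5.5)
The plan is to reuse the proofs of Propositions \ref{pro28032026}, \ref{pro29032026B} and \ref{pro29032026C} essentially word for word. Those proofs never use that $q$ is odd directly. They only use the geometric data collected in the Lemmas: the degree of $\cF$ and $\cG$, the points at infinity $R^\pm$ with common tangent $\ell_\infty$, the nature of $O$ (a node with tangents $\ell^\pm$, or a triple point), the splitting behaviour in (v) and (vi), and $\cF\ne\cG$. By Lemmas \ref{lem4}, \ref{lemmaqev1}, \ref{lemmaqev2}, the even analogue of Lemma \ref{lem29032026}, and Lemma \ref{lem02042026}, all of these hold for $q\ge 4$ even, with $\ell^\pm$ now the lines $X_1=\epsilon X_2$ and $X_1=\epsilon^qX_2$, and $R^\pm=(\epsilon:1:0),(\epsilon^q:1:0)$. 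So the first step is to check that these lemmas also hold for $\cG$ in even characteristic.

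For $\cG$, the plan is to read off the even-characteristic $B(X_1,X_2)$ directly.
\begin{itemize}
\item The quartic part is $\gamma_1N^2$, where $N=X_1^2+X_1X_2+\delta X_2^2$. This gives the same points at infinity and degree $4$ or $3$ according as $\gamma_1\ne0$ or $\gamma_1=0$.
\item The quadratic part is $\alpha_1N$. So $O$ is a node with tangents $\ell^\pm$ when $\alpha_1\ne0$.
\item When $\alpha_1=0$, the cubic part is $N(\beta_1X_1+(\beta_0+\beta_1)X_2)+X_1^2X_2+X_1X_2^2+(\delta+1)X_2^3$, which is nonzero. So $O$ is a triple point.
\end{itemize}
Non-singularity of $R^\pm$ follows as in Lemma \ref{lem27032026}(iv). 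The point to check is that $N$ does not divide the cubic $X_1^2X_2+X_1X_2^2+(\delta+1)X_2^3$. Reducing that cubic modulo $N$ gives $\delta X_2^3 \ne 0$, so it is fine. The analogous check for $\cF$, namely $N\nmid X_1^3+\delta X_1X_2^2+\delta X_2^3$, is the "odd characteristic" step in (iv). That step must really be redone here, since the original argument used $2\ne0$. Reducing gives a nonzero remainder $\delta X_2^3$-type term, and I would verify this by direct division. Claim (v) for $\cF$ and $\cG$ then follows by the same tangent-line argument, because the Frobenius of $\mathbb{F}_{q^2}/\mathbb{F}_q$ swaps $R^+$ and $R^-$.

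With these in hand, Proposition \ref{pro28032026} goes through by the same Bézout count in each subcase. Subcases are split by $\alpha_0,\alpha_1$ zero or nonzero.
\begin{itemize}
\item $I(O,\cF\cap\cG)\ge 6$, or $\ge 9$ when both $\alpha$'s vanish.
\item $I(R^\pm,\cF\cap\cG)\ge2$ from the common tangent $\ell_\infty$.
\item So at most $16-6-4=6$ further common points, unless there is a common component. In that case $\cF$ (or $\cG$) splits into two conjugate conics not defined over $\mathbb{F}_q$. Each passes through $O$, so each has at most $3$ further $\mathbb{F}_q$-points, hence at most $6$.
\end{itemize}
Proposition \ref{pro29032026B} is handled identically with degrees $3$ and $4$. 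For Proposition \ref{pro29032026C} no geometry is needed: $g(X)=b_1\tau X^{q+1}+X^3$ has nonzero roots exactly those of $b_1\tau X^{q-2}+1$, and the count argument is characteristic-free.

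I expect the main obstacle to be the singular-point claims (iv) and (iii*) in characteristic $2$. Two things could go wrong: the relevant cubic forms could be divisible by $N$, or could vanish identically for special $\beta_0,\beta_1$. I would first settle these by explicit reduction modulo $N$, using $\epsilon^2=\epsilon+\delta$, before rerunning the counting arguments.
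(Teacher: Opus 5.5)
Your plan is the paper's own route. The paper gives no separate proof: it re-establishes the lemmas for $q$ even and lets the odd-case B\'ezout counts carry over verbatim. You are right that (iv) cannot be copied, since the odd proof ends by appealing to odd characteristic. The check does work, but the remainders are $X_1^3+\delta X_1X_2^2+\delta X_2^3=(X_1+X_2)N+X_1X_2^2$ and $X_1^2X_2+X_1X_2^2+(\delta+1)X_2^3=X_2N+X_2^3$, not $\delta X_2^3$. The same facts $N\nmid P$ and $N\nmid Q$ are also what show the cubic parts at $O$ are nonzero in (iii*).

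Two steps fail as written, though both are easily repaired.

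\emph{Proposition~\ref{pro29032026C}.} You divide the misprinted $b_1\tau X^{q+1}+X^3$ by $X^3$ and get $X^{q-2}$. But $X^{q-2}=c$ has $\gcd(q-2,q^2-1)=\gcd(q-2,3)\in\{1,3\}$ solutions or none in $\mathbb{F}_{q^2}^*$, which contradicts the claim. With $b_0=b_2=0$ the surviving term is $X^{q+1}\cdot b_1X$, so $g(X)=b_1X^{q+2}+X^3$. The nonzero roots then solve $X^{q-1}=-1/b_1$, which has $q-1$ or $0$ solutions according as $b_1^{q+1}=1$ or not, in every characteristic.

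\emph{Proposition~\ref{pro29032026B}.} This case is not handled ``identically''. For $\alpha_0=\gamma_0=0$, the curve $\cF$ is the cone $N(\beta_0X_1+\delta\beta_1X_2)+P=0$, i.e.\ three lines through $O$, and $R^\pm\notin\cF$ because $N\nmid P$. So nothing comes from infinity, and the bound is $12-3\cdot 2$ (or $12-9$) for $\gamma_1\neq0$, and $9-3\cdot 2$ for $\gamma_1=0$.

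The common-component case also needs its own argument. For $\gamma_1\neq0$, (v) for $\cG$ rules out linear components, so there is no common component at all. For $\gamma_1=0$, $\cG$ is a cubic and the conjugate-conics fallback does not apply. Argue instead as follows.
\begin{enumerate}
\item A common component must be a line through $O$, since $\cF$ is three lines through $O$.
\item Such a line lies on $\cG$ only if $N$ vanishes on it, because the quadratic part of $B$ is $\alpha_1N$ with $\alpha_1\neq0$. So the line is $\ell^\pm$.
\item But $\ell^\pm\not\subset\cF$, again because $N\nmid P$.
\end{enumerate}
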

\begin{proposition} 
\label{pro02042026B} Let $m=3$. If $q\ge 8$, then the third minimum weight is at least $q^2-7$. 
\end{proposition}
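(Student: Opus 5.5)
The plan is to show that every non-zero codeword of $C_\mathcal{L}(\mathtt{D},\mathtt{G})$ for $m=3$ has one of three kinds of zero set on $D$. Either it has exactly $q+1$ zeros (the minimum weight $q^2-q-2$), or exactly $q-1$ zeros (weight $q^2-q$), or at most $6$ zeros. For $q\ge 8$ we have $6<q-1<q+1$, so the third smallest weight is then at least $q^2-1-6=q^2-7$.

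By Proposition \ref{pro09032026}, a codeword is $ev_{\mathtt D}(f)$ with $f=(y(b_0+b_1x+b_2y)+\varepsilon x^3)/x^3$. Its zeros on $D$ are the points of $\cV\cap\cC_\tau$ lying in $D$, as in the proof of Theorem \ref{mainth}.

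\textbf{The case $\varepsilon=0$.} The zeros on $D$ are among the non-zero roots in $\mathbb{F}_{q^2}$ of $b_0+b_1X+\tau b_2X^{q+1}$.
\begin{itemize}
\item If $b_2=0$, there is at most one root.
\item If $b_2\ne0$, we divide by $\tau b_2$ and use the result on $X^{q+1}+aX+b$ quoted in the proof of Proposition \ref{pro19032026}: the number of roots is $0,1,2$ or $q+1$.
\end{itemize}
So these codewords have weight at least $q^2-3$, unless they are the minimum weight words of Proposition \ref{pro19032026}.

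\textbf{The case $\varepsilon\ne 0$.} We normalise $\varepsilon=1$, since scalar multiples do not change weights. Substituting $y=\tau x^{q+1}$, the zeros on $D$ are the non-zero roots in $\mathbb{F}_{q^2}$ of $g(X)=\tau X^{q+1}(b_0+b_1X+\tau b_2X^{q+1})+X^3$. After the rescaling made at the start of the subsection, these are the non-zero roots of $X^{q+1}(b_0+b_1X+b_2X^{q+1})+X^3$. Field reduction sends them bijectively to the affine points of $\cF\cap\cG$ in $PG(2,q)$ other than $O$. The bound then comes from the earlier propositions, used both for odd $q$ and, through Proposition \ref{pro02042026A}, for even $q\ge 8$.
\begin{itemize}
\item Proposition \ref{pro28032026} gives at most $6$ points when $\gamma_0\gamma_1\neq0$.
\item Proposition \ref{pro29032026B} gives at most $6$ points when $\alpha_0=\gamma_0=0$ and $(\alpha_1,\gamma_1)\ne(0,0)$.
\item Proposition \ref{pro29032026C} gives $0$ or $q-1$ points when $\alpha_0=\alpha_1=\gamma_0=\gamma_1=0$. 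This is the only source of the second weight $q^2-q$.
\end{itemize}

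\textbf{The remaining configurations (main obstacle).} These are the cases not literally covered above:
\begin{itemize}
\item exactly one of $\gamma_0,\gamma_1$ vanishes;
\item $\gamma_0=\gamma_1=0$ with some $\alpha_j\ne0$, apart from the case $\alpha_0=0\ne\alpha_1$ already in Proposition \ref{pro29032026B}.
\end{itemize}
I would treat them with the same Bézout bookkeeping as in Proposition \ref{pro28032026}. Lemmas \ref{lem27032026}, \ref{lem27032026D}, \ref{lem28032026B}, \ref{lem29032026} and \ref{lem28032026A} give, for each curve, its degree ($3$ or $4$), the multiplicity of $O$, and the tangents at $O$.
\begin{itemize}
\item If both curves have a double point at $O$, they share the tangents $\ell^\pm$ there, so $I(O,\cF\cap\cG)\ge 6$. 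If one has a triple point at $O$, then $I(O,\cF\cap\cG)\ge 6$ directly.
\item A quartic passes through $R^\pm$, tangent to $\ell_\infty$. If both curves are quartics, this contributes $I(R^\pm,\cF\cap\cG)\ge 2$ each.
\end{itemize}
The resulting counts are then:
\begin{itemize}
\item for a cubic and a quartic: $12-6=6$;
\item for two cubics: $9-\min I(O,\cF\cap\cG)\le 5$;
\item for two quartics: $16-6-4=6$.
\end{itemize}
The difficulty lies in the common-component case, and I would handle it in two steps. First, Lemma \ref{lem28032026C} (and its even analogue Lemma \ref{lem02042026}) excludes $\cF=\cG$. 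Second, by part (v) of the lemmas, a reducible quartic splits into two conjugate conics, neither defined over $\mathbb{F}_q$. Each such conic passes through $O$ and has at most $3$ further points in $PG(2,q)$, which again caps the count at $6$. A reducible cubic consists of three lines through $O$ (Lemma \ref{lem29032026}); a common line component of two such cubics would force $\cF=\cG$ or contradict the computation in Lemma \ref{lem28032026C}. I would check this last point by comparing the coefficients of the cubic parts.

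Collecting all cases, every codeword outside the first two weight classes has at most $6$ zeros on $D$. This is the claimed bound $q^2-7$.
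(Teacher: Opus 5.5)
Your overall route is the one the paper intends. The paper lists this statement among the corollaries of Propositions \ref{pro28032026}--\ref{pro02042026A}, together with the $0,1,2,q+1$ root count for $\varepsilon=0$. You also correctly notice that those propositions do not literally cover the configurations where exactly one of $\gamma_0,\gamma_1$ vanishes, or where $\gamma_0=\gamma_1=0\neq\alpha_0$. Your B\'ezout counts there are right, and so is the cubic--quartic common-component argument via part (v).

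The gap is the two-cubic case $\gamma_0=\gamma_1=0\neq\alpha_0$. You assert, citing Lemma \ref{lem29032026}, that a reducible cubic consists of three lines through $O$. That lemma only describes $\cF$ when $\alpha_0=\gamma_0=0$. Here $\cF$ has a node at $O$, so a priori it could split as a line plus a conic, or as three lines not all through $O$. An $\mathbb{F}_q$-rational line shared with $\cG$ would then give about $q$ common points and ruin the bound. Your fallback claim, that a common line of two such cubics forces $\cF=\cG$, is also false: two triples of lines through $O$ can share a single line. In any case, both curves are triples of lines through $O$ only when $\alpha_0=\alpha_1=\gamma_0=\gamma_1=0$. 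That is Proposition \ref{pro29032026C}, not one of your remaining cases.

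The missing step is the absolute irreducibility of the nodal cubic. Put $N=X_1^2-sX_2^2$ for $q$ odd (resp.\ $N=X_1^2+X_1X_2+\delta X_2^2$ for $q$ even). Then $A=(\alpha_0+\beta_0X_1+s\beta_1X_2)N+X_1(X_1^2+3sX_2^2)$ has tangent cone $\alpha_0N$ at $O$. Suppose $A=\lambda C$ with $\lambda$ linear.
\begin{itemize}
\item If $\lambda(O)=0$, then $\lambda$ divides $\alpha_0N$, so it defines $\ell^+$ or $\ell^-$. Applying Frobenius to the coefficients shows that the conjugate line also divides $A$.
\item If $\lambda(O)\neq 0$, the conic $C$ is singular at $O$, hence $C=cN$.
\end{itemize}
Either way $N\mid A$, i.e.\ $N\mid X_1(X_1^2+3sX_2^2)$, which is impossible for $q$ odd. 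For $q$ even the corresponding condition is $N\mid X_1^3+\delta X_1X_2^2+\delta X_2^3$, and it fails because this cubic takes the value $\epsilon\neq 0$ at $(\epsilon,1)$. Hence $\cF$ is absolutely irreducible. A common component would then force $\cF=\cG$, which is excluded by Lemma \ref{lem28032026C} (resp.\ Lemma \ref{lem02042026}). B\'ezout then gives at most $9-6=3$ common affine points other than $O$. With this inserted, your case analysis does yield the bound $q^2-7$.
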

\subsection{Weight distribution}
Propositions 4,\ldots, 9  have the following corollaries.
\begin{theorem}
\label{th03042026} Let $m=3$, and $q\ge 4$. Then  $C_\mathcal{L}(\mathtt{D},\mathtt{G})$ is a 
$[q^2-1, 4, q^2-q-2]_{q^2}$ cyclic code with the following properties.  
\begin{itemize}
\item[(i)] The first minimum weight,  that is, the minimum distance is equal to $q^2-q-2$. The number of codewords with the first minimum weight is equal to $(q-1)(q^2-1)$.
\item[(ii)] For $q\ge 7$, the second minimum weight is equal to $q^2-q$. For $q\ge 8$, %a codeword $ev_{\mathtt{D}}(f)$ of $\mathcal{L}(\mathtt{G})$ has the second minimum weight 
%if and only if 
%$$f(x,y)=\frac{b_1xy+\varepsilon}{x^3}$$
%where $b_1^{q+1}=\tau^{q+1}$ and $\varepsilon\in \mathbb{F}_{q^2}$, $\varepsilon\ne 0$ 
the number of codewords with the second minimum weight is equal to $(q+1)(q^2-1)$.
\item[(iii)] For $q\ge 8$, the third minimum weight is at least $q^2-7$.  
\item[(iv)] The number of non-zero codewords with pairwise different weights is at most nine. 
%(The number of distinct non-zero weights is at most nine.) 
\end{itemize}
\end{theorem}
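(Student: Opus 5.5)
The plan is to assemble Theorem \ref{th03042026} from the results already established for $m=3$, organising the nonzero codewords by the shape of the function $f=(y(b_0+b_1x+b_2y)+\varepsilon x^3)/x^3\in\mathcal{L}(\mathtt{G})$ given by Proposition \ref{pro10032026}. The parameters $[q^2-1,4]$ and cyclicity come from Theorem \ref{mainth} with $\frac12 m(m-1)+1=4$. Item (i) is Proposition \ref{pro13032026} together with Proposition \ref{pro19032026}. The latter gives the count $(q-1)(q^2-1)$ for $q>5$. The small cases $q=4,5$ will need separate attention, or a restriction, in view of the Remark.

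For the remaining items I will split the nonzero codewords into classes.

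\textbf{Class A: $\varepsilon=0$.} The zeros on $D$ are the nonzero roots in $\mathbb{F}_{q^2}$ of $\tau b_2X^{q+1}+b_1X+b_0$. By the lacunary result cited in the proof of Proposition \ref{pro19032026}, this polynomial has $0,1,2$ or $q+1$ roots when $b_2\ne0$. The subcase $b_2=0$ gives at most one root.

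\textbf{Class B: $\varepsilon\ne0$, normalised to $\varepsilon=1$.}
\begin{itemize}
\item If $b_0=b_2=0$ and $b_1\ne0$, Proposition \ref{pro29032026C} (and its even analogue, Proposition \ref{pro02042026A}) shows there are $0$ or $q-1$ zeros. The value $q-1$ occurs for exactly $q+1$ values of $b_1\tau$. Multiplying by the $q^2-1$ choices of scalar gives $(q+1)(q^2-1)$ codewords of weight $q^2-q$.
\item If $b_0=b_1=b_2=0$, the codeword is a nonzero constant of weight $q^2-1$.
\item Otherwise, field reduction converts the zero count into the number of affine $\mathbb{F}_q$-points of $\cF\cap\cG$ other than $O$. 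Lemma \ref{lem28032026C} guarantees that $\cF\ne\cG$. Propositions \ref{pro28032026} and \ref{pro29032026B}, together with Proposition \ref{pro02042026A} for even $q$, then bound this count by $6$, so the weight is at least $q^2-7$.
\end{itemize}
I must check that the case analysis covers every pattern of $(\alpha_0,\alpha_1,\gamma_0,\gamma_1)$. The cases not yet written out are $\gamma_0\ne0=\gamma_1$, and $\gamma_0=0\ne\alpha_0$. I plan to handle them with the same Bézout count, using the symmetry between $\cF$ and $\cG$ provided by Lemma \ref{lem28032026A}.

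Item (ii) follows by comparing the bounds. The weights in Class A, apart from the minimum, are at least $q^2-3$. The generic weights in Class B are at least $q^2-7$. For $q\ge7$ we have $q-1\ge6$, so $q^2-q\le q^2-7$, and $q^2-q$ is the second weight. Strict inequality, needed for the count when $q\ge8$, holds because $q^2-q<q^2-7$ there. I also need to rule out a Class A codeword of weight $q^2-q$; this holds because Class A root counts lie in $\{0,1,2,q+1\}$. Item (iii) is Proposition \ref{pro02042026B}, which is exactly the Class B bound just described.

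Item (iv) comes from listing the possible numbers of zeros on $D$: $q+1$ and $q-1$, together with the values $0,1,\dots,6$. That is at most nine distinct weights. Note that $2$ from Class A is already among $0,\dots,6$.

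The main obstacle I expect is the exhaustiveness of the Class B case split, and the correctness of reducing to $\mathbb{F}_q$-points of $\cF\cap\cG$. In particular, when $\cF$ and $\cG$ share a conic component, I have to confirm that the bound of six $\mathbb{F}_q$-rational points still applies. I would first try to argue that a conic not defined over $\mathbb{F}_q$ meets its Frobenius conjugate in at most four points, and that its $\mathbb{F}_q$-points lie in this intersection.
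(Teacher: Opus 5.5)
Your proposal follows the paper's own route. The theorem is stated there as a direct corollary of Propositions \ref{pro13032026}--\ref{pro02042026B}, and (i)--(iv) follow from your split exactly as you describe: $\varepsilon=0$ (root counts $0,1,2,q+1$), the subcase $b_0=b_2=0$ ($0$ or $q-1$ zeros), and the generic $\varepsilon\neq0$ case (at most $6$ zeros via $\cF\cap\cG$). The caveats you flag are real defects of the paper rather than of your plan: the count in (i) is contradicted at $q=5$ by the paper's own Remark, so a restriction is needed; the patterns $\gamma_0\neq0=\gamma_1$ and $\gamma_0=0\neq\alpha_0$ are not treated in Propositions \ref{pro28032026}--\ref{pro29032026B} but yield to the same B\'ezout count; and at $q=4$, where $6>q+1$, you genuinely depend on Proposition \ref{pro13032026}, whose proof tacitly assumes that a line $Y=\eta$ carrying two points of $D$ carries a third, so the separate check you propose for that case is warranted.
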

\begin{remark} \emph{The first claim in (ii) is not true for $q=5$, as a MAGMA aided computation shows for $q=5$ that the second minimum weight is equal to $19$.  
The second claim in (ii) does not hold for $q=7$. In fact, a MAGMA aided computation shows for $q=7$ that the number of codewords of minimum distance of $C_\mathcal{L}(\mathtt{D},\mathtt{G})$ is equal to $4992>384$. }
\end{remark}
\begin{remark}
\emph{It is plausible that equality should hold in (iii) and (iv), and this is confirmed by MAGMA aided computation for smaller values of $q$. However, a possible proof for the general case would require a much more detailed analysis of the set of common points of $\cF$ and $\cG$. }  
\end{remark}


\begin{thebibliography}{999}
 \bibitem{br} E. Ballico, A. Ravagnani, On the geometry of Hermitian one-point codes, \emph{J. Algebra}
{\bf{397}} (2014), 499-514.
\bibitem{bl} A.W. Bluher, On $x^{q+1}+ax+b$, \emph{Finite Fields Appl.}, {\bf{10}} (2004), 285-305.
 \bibitem{magma}W. Bosma, J. Cannon, C. Playoust, \textit{The Magma algebra system. I. The user language}, Journal of Symbolic Computation, 1997, 24, pp. 235-265.
% \bibitem{CT}C. Carvalho, F. Torres, On Goppa codes and Weierstrass gaps at several points, \emph{Des. Codes Cryptogr}, {\bf{35}} (2005), 211-225.
 \bibitem{CCPT} G. Caba$\rm{\tilde{n}}$a, M. Chara, R. Podest\'a, R. Toledano, On cyclic algebraic-geometry codes, \emph{Finite Fields Appl.} {\bf{82}} (2022), Paper No. 102064, 31 pp. 
\bibitem{har}  R.W. Hartley, Determination of the ternary collineation groups whose coefficients lie in the $GF(2^n$), \emph{Ann. of Math.} {\bf{27}} (1925/26), 140-158.
 \bibitem{Hirschfeld1}J. W. P. Hirschfeld, \textit{Projective Geometries over Finite Fields}, Oxford Mathematical Monographs, 1979.
 \bibitem{HKT}J. W. P. Hirschfeld, G. Korchm\'{a}ros, F. Torres, \textit{Algebraic curves over a finite field}, Princeton Series in Applied Mathematics. Princeton University Press, Princeton, NJ, 2008. xx+696 pp. 
% \bibitem{hirschfeld-storme-thas-voloch1991} J.W.P.~Hirschfeld, L.~Storme,
%J.A.~Thas, and J.F.~Voloch,  A characterization of Hermitian
%curves, \emph{J. Geom.} {\bf 41} (1991), 72--78.
  \bibitem{hoffer1972} A.R.~Hoffer,  On unitary collineation groups, \emph{J. Algebra} {\bf 22} (1972), 211--218.
 \bibitem{hughes-piper1973} D.R.~Hughes and F.C.~Piper, \emph{Projective Planes}, Graduate Texts in Mathematics {\bf 6}, Springer, New York, 1973, x+291 pp.
 \bibitem{kcm} K.H. Kim, J. Choe, S. Mesnager,  Solving $X^{q+1}+X+a=0$ over finite fields,
\emph{Finite Fields Appl.,} {\bf{70}} (2021), Article 101797. 
 \bibitem{oli} O.H.  King, The subgroup structure of finite classical groups in
 terms of geometric configurations Surveys in \emph{Combinatorics 2005}, pp. 29-- 56
 Cambridge University Press.
 \bibitem{KN1}G. Korchm\'{a}ros, G. P. Nagy, Hermitian codes from higher degree places, \emph{ J. Pure Appl. Algebra } {\bf{217}} (2013),  2371-2381.
 \bibitem{KN2}G. Korchm\'{a}ros, G. P. Nagy, Lower bounds on the minimum distance in Hermitian one-point differential codes, \emph{ Sci. China Math. } {\bf{56}} (2013), 1449-1455.
 \bibitem{KNT}G. Korchm\'{a}ros, G. P. Nagy, M. Timpanella, Codes and gap sequences of Hermitian curves, \emph{ IEEE Trans. Inform. Theory}, {\bf{66}} (2019), 3547-3554.
 \bibitem{KS}G. Korchm\'{a}ros, P. Speziali, Hermitian codes with automorphism group isomorphic to $PGL(2,q)$ with $q$ odd, \emph{Finite Fields Appl.} {\bf{44}} (2017), 1-17.
 \bibitem{M}G. L. Matthews, Weierstrass pairs and minimum distance of Goppa codes, \emph{Des. Codes Cryptogr.}, {\bf{22}} (2001), 107-121.
 \bibitem{mi} H.H. Mitchell, Determination of the ordinary and modular ternary linear
 groups, \emph{Trans. Amer. Math. Soc.} {\bf{12}} (1911), 207-242.
 \bibitem{P}O. Pretzel, \textit{Codes and algebraic curves}, Oxford Lecture Series in Mathematics and Its Applications, Clarendon Press, Oxford, 1998.
 \bibitem{Segre}B. Segre, Forme e geometrie hermitiane, con particolare riguardo al caso finito, \emph{Ann. Mat. Pura Appl.}, {\bf{70}} (1965), 1-201.
\bibitem{sti} H.~Stichtenoth, \emph{ Algebraic Function Fields and Codes},  Second edition. Graduate Texts in Mathematics, 254. Springer-Verlag, Berlin, 2009. xiv+355 pp.
\bibitem{XC} C.P. Xing and H. Chen, Improvements on parameters of one-point AG-codes from Hermitian codes, \emph{IEEE Trans. Inform. Theory} {\bf{48}} 2002, 535-537.
\bibitem{YK} K. Yang and P. V. Kumar, On the True Minimum Distance of Hermitian Codes, in \emph{Coding theory and algebraic geometry}, Lecture Notes in Mathematics, 1992, Volume {\bf{1518/1992}}, 99-10.
\end{thebibliography}
\end{document}